\definecolor{darkred}{rgb}{0.5,0,0}
\definecolor{darkgreen}{rgb}{0,0.5,0}
\definecolor{darkblue}{rgb}{0,0,0.5}
\newtheorem{btheorem}{$\beta$-Theorem}[section]
\newtheorem{theorem}[btheorem]{Theorem}
\newtheorem{corollary}[btheorem]{Corollary}
\newtheorem{proposition}[btheorem]{Proposition}
\newtheorem{lemma}[btheorem]{Lemma}
\theoremstyle{definition}
\newtheorem{definition}[btheorem]{Definition}
\theoremstyle{remark}
\newtheorem{remark}[btheorem]{Remark}
\newcommand\M{\mathcal{M}}
\newcommand\B{\mathcal{B}}
\newcommand\E{\mathcal{E}}
\renewcommand{\L}{\mathcal{L}}
\newcommand{\J}{\mathcal{J}}
\newcommand{\F}{\mathcal{F}}
\newcommand{\R}{\mathbb{R}}
\newcommand{\C}{\mathbb{C}}
\newcommand{\Z}{\mathbb{Z}}
\newcommand{\ppt}{\frac{\partial}{\partial t}}
\renewcommand{\P}{\mathbb{P}}
\newcommand{\mP}{\mathcal{P}}
\newcommand{\on}{\operatorname}
\newcommand{\Aut}{ \on{Aut} }
\renewcommand{\ker}{ \on{ker}}
\newcommand{\im}{ \on{Im}}
\newcommand\Id{\on{Id}}
\newcommand{\hra}{\hookrightarrow}
\newcommand{\tensor}{\otimes}
\newcommand{\Bl}{ \on{Bl}}
\newcommand\bran[1]{ \lan {#1} \ran}
\newcommand{\loc}{{\on{loc}}}
\newcommand{\ol}{\overline}
\newcommand\ul{\underline}
\newcommand{\delbar}{\ol{\partial}}
\newcommand\bs{\backslash} 
\newcommand\ev{\on{ev}}
\newcommand\lam{\lambda}
\newcommand\Sig{\Sigma}
\newcommand\sig{\sigma}
\newcommand\eps{\varepsilon}
\newcommand\Om{\Omega}
\newcommand\om{\omega}
\newcommand{\lan}{\langle}
\newcommand{\ran}{\rangle}
\newcommand{\hh}{{\frac{1}{2}}}
\newcommand{\texthh}{{\tfrac{1}{2}}}
\newcommand\Mod[1]{\lVert #1 \rVert}
\newcommand\qu{/\kern-.7ex/} % Categorical quotients
\newcommand\mO{\mathcal{O}}
\newcommand\Op{\mathcal{O}p}
\newcommand\bS{\mathbb{S}}
\newcommand\bL{\mathbb{L}}
\newcommand\bD{\mathbb{D}}
\newcommand\bT{\mathbb{T}}
\newcommand\bt{{\bf t}}
\newcommand\txi{\tilde{\xi}}
\newcommand{\pre}{{pre}}
\newcommand{\pa}{ \partial_a}
\newcommand{\Lutt}{\on{Lutt}}
\newcommand{\Symp}{\on{Symp}}
\newcommand{\Diff}{\on{Diff}}
\newcommand{\Map}{\on{Map}}
\newcommand{\ssum}{{\textstyle \sum}}
\newcommand{\wind}{\on{wind}}
\newcommand{\level}{\on{level}}
\newcommand{\can}{{can}}
\newcommand{\image}{\on{image}}
\newcommand{\cyl}{{\on{cyl}}}
\newcommand{\tw}{{\on{tw}}}
\newcommand{\oSig}{\Sig^\circ}
\newcommand\seps{\epsilon}
\newcommand\cpt{{\on{cpt}}}
\newcommand\inj{{\on{inj}}}
\newcommand{\Ree}{\gamma}
\newcommand{\Hof}{{\on{Hof}}}
\newcommand{\hor}{{\on{hor}}}
\begin{document}
\title{Symplectic foliated fillings of sphere cotangent bundles}
\author{Francisco Presas}
\email{fpresas@icmat.es}
\address{Instituto de Ciencias Matem\'aticas
  CSIC-UAM-UC3M-UCM, C. Nicol\'as Cabrera, 13-15, 28049 Madrid,
  Spain.}  

\author{Sushmita Venugopalan}
\email{sushmita@imsc.res.in}
\address{Institute of Mathematical
  Sciences, CIT Campus, Taramani, Chennai 600113, India.}

\begin{abstract} We classify symplectically foliated fillings of
  certain foliated manifolds with a contact structure on the
  leaves. We show that for the foliated sphere cotangent bundle of the
  Reeb foliation in $\bS^3$, the corresponding foliated disk cotangent bundle
  is the unique strong symplectic foliated filling up to blowups and
  symplectic deformation equivalence.  En route to the proof, we study
  another foliated manifold, namely the product of a circle and an
  annulus with an almost horizontal foliation.  In this case, the
  foliated filling of the foliated sphere cotangent bundle is not
  unique.  We show that any such filling is a foliated Lefschetz
  fibration, and is determined up to symplectic deformation
  equivalence, by combinatorial invariants arising from the singular
  locus of the Lefschetz fibration.
\end{abstract}

\maketitle

\tableofcontents

\section{Introduction}
Sphere cotangent bundles have a canonical contact structure, and the
corresponding disk cotangent bundles are natural symplectic fillings.
For a manifold $M$ with a smooth foliation $\F$, the cotangent bundle
$T^*\F$ of the leaves of $\F$, called the {\em foliated cotangent
  bundle} is itself a foliated manifold with a symplectic form on its
leaves.  The {\em foliated sphere (or unit) cotangent bundle}
\[\bS(T^*\F)  \subset T^*\F\]
is the subset of unit co-vectors of the foliated cotangent bundle, and
is a foliated manifold with a contact structure on its leaves.
Analogous to the unfoliated case, the foliated disk cotangent bundle
is a natural symplectic filling of the foliated sphere cotangent
bundle on every leaf.  We show that for the $3$-sphere with Reeb
foliation, the foliated disk cotangent bundle is the unique minimal
filling of the foliated unit cotangent bundle up to symplectic
deformation equivalence.

 The Reeb foliation $\F_{Reeb}$ on the sphere $\bS^3$, described in
 Figure \ref{fig:Reeb}, contains the
 torus $\bT^2$ as a leaf.
 The torus divides $\bS^3$ into two solid
tori, each of whose interior is foliated by an $\bS^1$-family of
non-compact leaves.
One of the results of the paper is that if a
foliated contact manifold with three dimensional leaves has a compact
leaf, then in a foliated filling the compact leaf has a compact
filling. Using this result, any filling $W_{Reeb}$ of the foliated
unit cotangent bundle of $(\bS^3,\F_{Reeb})$ can be split along the
compact leaf to yield $W_{Reeb}^+$, $W_{Reeb}^-$. The two pieces are
fillings of the foliated unit cotangent bundle of the solid torus with
Reeb foliation. By performing a surgery, we reduce either of the pieces
$W_{Reeb}^\pm$ to a filling of the foliated sphere cotangent bundle of
an almost horizontal foliation.

An almost horizontal foliation $\F_{ah}$ on a two-dimensional annulus
$A^2$ consists of two compact leaves which are the boundary of the
annulus, and an $\bS^1$-family of non-compact leaves with one end
asymptotic to each of the leaves, see Figure \ref{fig:test}.  We
analyse the fillings of the foliated sphere cotangent bundle of the
product $\bS^1 \times (A^2,\F_{ah})$.  Using holomorphic curve
techniques, we show that any filling is a a Lefschetz fibration on a
foliated base manifold. The fillings are classified up to symplectic
deformation equivalence by the homotopy class of the locus of singular
values of the Lefschetz fibration, which is a combinatorial invariant
of the filling.

The various fillings in the almost horizontal case give a new class of
examples of foliations with a strong symplectic form.  In
\cite{v:nov}, the second author gives an example of a five-manifold
that violates a Novikov property that was expected to be true for
strong symplectic foliations, and the example closely resembles the
fillings in the almost horizontal case in this paper.

\subsection{Motivating the problem} 
 The problem of classifying symplectic fillings is related to the
 classical question \cite{wall1960determination} of understanding
 cobordism classes of closed differentiable manifolds.  A {\em symplectic
 filling} $W$ is an oriented cobordism from an empty set to a contact
 manifold $M$, that is, $M$ is the convex boundary of the symplectic
 filling $W$. We assume without mentioning that fillings are `strong'
 in the sense that a neighborhood of the boundary $\partial W$ is
 symplectomorphic to $((-\eps,0] \times M, d(e^t \alpha))$ where
 $\alpha$ is a contact one-form on $M$.

 Several contact $3$-folds have been shown to possess unique minimal
 symplectic fillings. Examples include $\bS^3$ \cite{Eliash:holdiscs},
 $\R\P^3$ (which is the unit cotangent bundle of $\bS^2$)
 \cite{Hind:rp3}, and Lens spaces $L(p,1)$ \cite{Hind:lens}, except
 for the special case of $L(4,1)$ which has two minimal symplectic
 fillings. The unit cotangent bundle of the $2$-torus, which is the
 $3$-torus, also has a unique minimal symplectic filling up to
 deformation equivalence \cite{Wendl}.  In all the cases, the proofs
 are adaptations of Eliashberg's technique in \cite{Eliash:holdiscs}
 where the filling of the contact $3$-sphere is foliated by
 holomorphic disks.  Other holomorphic foliations have been used, for
 example, in \cite{Wendl} the filling of the $3$-torus is foliated by
 holomorphic cylinders. However, for the technique to work, curves in
 the holomorphic foliation must have genus zero; otherwise the moduli
 space of holomorphic curves is not transversely cut out.  For the
 fillings of unit cotangent bundles of higher genus surfaces, only
 partial results \cite{sivek2017fillings} are available which say that
 any filling is homotopy equivalent to the disk cotangent bundle.

Fillability properties of contact foliations  resemble those of unfoliated contact manifolds.
For example, 
 a necessary condition for a contact manifold to possess a filling is
 that it is not overtwisted (see \cite{borman2015existence},
 \cite{eliashberg1989classification}, \cite{nieder2006plastik}).
 The
 techniques in \cite{Pino:weinstein} imply that a contact foliation
 with an overtwisted leaf does not admit a foliated filling.

A natural question is whether there are contact foliations whose
symplectic fillings can be classified. The list of unfoliated examples
with unique fillings suggests that we look at foliated unit cotangent
bundles of a foliated $3$-manifold, and the Reeb-foliated $3$-sphere
appeared to be an interesting example.

\begin{figure}
  \centering
  \begin{subfigure}{.5\textwidth}
    \centering \includegraphics[width=.8\linewidth]{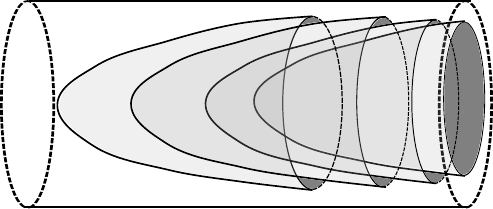}
    % \caption{A subfigure}
    % \label{fig:sub2}
  \end{subfigure}%
  \begin{subfigure}{.5\textwidth}
    \centering \includegraphics[width=.4\linewidth]{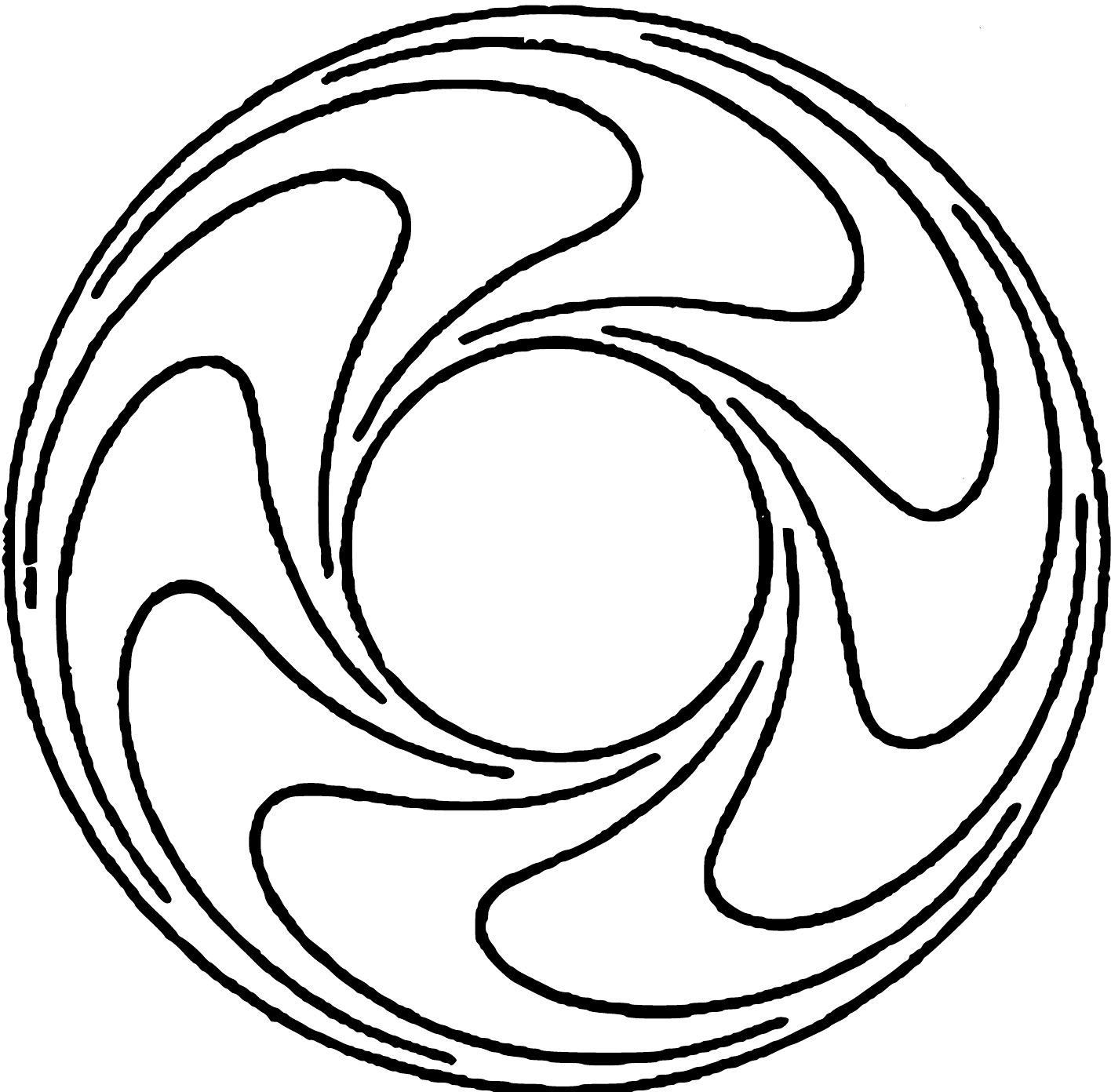}
    % \caption{A subfigure}
    % \label{fig:sub1}
  \end{subfigure}
  \caption{ {\sc Left}: Reeb foliation on $\R \times \bD^2$. This
    foliation is $\R$-invariant, and descends to a foliation on the
    solid torus $\bS^1 \times \bD^2$.  {\sc Right}: Reeb foliation on
    $\bS^1 \times \bD^2$ restricted to equatorial annulus $A^2$.  The
    Reeb foliation on $\bS^3$ has a compact leaf, which is the
    Clifford torus. It divides $\bS^3$ into two solid tori, each of
    which has the Reeb foliation. }
  \label{fig:Reeb}
\end{figure}

\begin{figure}
  \begin{center}
    \scalebox{.75}{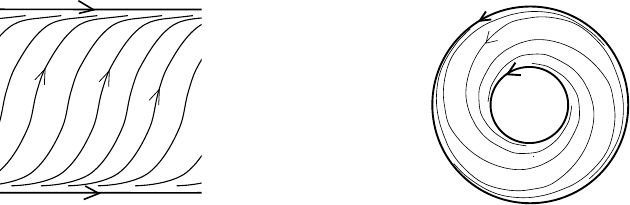}
  \end{center}
  \caption{{\sc Left}: Almost horizontal foliation on the strip
  $\R \times [0,1]$.  {\sc Right:} Almost horizontal foliation on
  the annulus $A^2\simeq \bS^1 \times [0,1]$ obtained by quotienting
  the foliated strip on the left.}
  \label{fig:test}
\end{figure}

\subsection{Statement of results}
Our results cover both weak and strong symplectic foliations, which
are defined as follows.  A codimension-one foliation $\F$ in an
odd-dimensional manifold $M$ is called {\em strong symplectic} if
there is a closed two-form on $M$ that restricts to a symplectic form
on the leaves of the foliation. In case of a {\em weak symplectic
  foliation}, there is a two-form that restricts to a symplectic form
on leaves, but is not required to be closed.

A filling is {\em minimal} if it does not have any embedded symplectic
sphere in a leaf whose self-intersection in the leaf is $-1$.  In case
of strong symplectic foliations, an exceptional sphere is part of an
$\bS^1$-family of exceptional spheres, and it can be blown
down (Proposition \ref{prop:blowdown}). Consequently an arbitrary filling can be transformed to a
minimal filling.  A corresponding result is not available in the weak
symplectic case.

% \begin{figure}
%   \centering
%   \begin{subfigure}{.5\textwidth}
%     \centering \includegraphics[width=.4\linewidth,
%     angle=270]{ah_strip.pdf}
%     % \caption{A subfigure}
%     % \label{fig:sub2}
%   \end{subfigure}%
%   \begin{subfigure}{.5\textwidth}
%     \centering \includegraphics[width=.4\linewidth]{ah.pdf}
%     % \caption{A subfigure}
%     % \label{fig:sub1}
%   \end{subfigure}
%   \caption{{\sc Left}: Almost horizontal foliation on the strip
%     $\R \times [0,1]$.  {\sc Right:} Almost horizontal foliation on
%     the annulus $A^2\simeq \bS^1 \times [0,1]$ obtained by quotienting
%     the foliated strip on the left.}
%   \label{fig:test}
% \end{figure}
%
\begin{theorem}\label{thm:reeb}
  Let $(\bS^3,\F_{Reeb})$ be the Reeb foliation.  Any minimal strong
  symplectic filling of the sphere cotangent bundle
  $\bS(T^*\F_{Reeb})$ is symplectic deformation equivalent to the disk
  cotangent bundle $\bD (T^*\F_{Reeb})$.
\end{theorem}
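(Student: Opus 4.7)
The plan is to extend Wendl's holomorphic filling argument for the standard contact $\bT^3$ from its one compact leaf to the entire foliated filling $(W,\F,\om)$. The Reeb foliation decomposes $\bS^3$ as two solid tori glued along a torus leaf $T$, each solid torus being foliated by planes. Accordingly, $\bS(T^*\F_{reeb})$ has one compact leaf $\bT^3 = \bS(T^*T)$ together with two one-parameter families of non-compact leaves of the form $\R^2 \times \bS^1$. The model filling $\bD(T^*\F_{reeb})$ has correspondingly one compact leaf $\bT^2 \times \bD^2$ and two families of non-compact leaves $\R^2 \times \bD^2$; the goal is to match $W$ with the model leaf by leaf.

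Choose a leafwise almost complex structure $J$ on $W$ that is $\om$-tame, cylindrical near $\partial W$, and generic on each leaf. The unique compact leaf $L_0 \subset W$ is a symplectic $4$-manifold filling $(\bT^3,\xi_{std})$. Wendl's classification produces a foliation of $L_0$ by $J$-holomorphic disks with boundary on $\bT^3$, and this foliation induces a symplectic deformation equivalence of $L_0$ with $\bT^2 \times \bD^2$. Minimality of the filling, together with Proposition \ref{prop:p1nbhd}, rules out exceptional sphere bubbling in $L_0$, so the argument carries over unchanged from the unfoliated setting.

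The heart of the proof is transporting these holomorphic disks across the one-parameter families of neighbouring leaves $L_t$. The strong symplectic hypothesis provides a global cohomology class $[\om]$ yielding uniform energy bounds for $J$-holomorphic disks in corresponding relative homology classes across leaves. Parametric Gromov compactness then propagates the disk foliation from $L_0$ to nearby $L_t$, and by an open-closed argument in $t$ to every non-compact leaf, identifying each $L_t$ with $\R^2 \times \bD^2$. The leafwise disk fibrations assemble into a foliated Lefschetz-type fibration $W \to \bS^3$ by symplectic disks modelled on $\bD(T^*\F_{reeb})$, and a leafwise Moser argument, using that the parametric family of symplectic forms on a disk bundle is determined up to deformation by its fibration data, upgrades this identification to a symplectic deformation equivalence.

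The main obstacle lies in the behaviour at the torus leaf, where the non-compact plane leaves accumulate on $L_0$ and the holomorphic disk families on $L_t$ could, a priori, break into stable nodal curves or escape to infinity as $t$ approaches the torus limit. Controlling these degenerations, showing that the leafwise moduli of disks vary continuously across $L_0$, and ensuring that the resulting fibration structure is smooth across the compact leaf, is where the bulk of the technical work is expected to lie; the remaining pieces of the argument are essentially leafwise applications of techniques already present in the contact case.
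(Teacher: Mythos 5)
Your first step---locating the compact leaf $L_0$ filling $\bT^3$ and identifying it with the standard filling via Wendl---agrees with the paper (Theorem \ref{thm:cptleaf} and Proposition \ref{prop:tst2}). But the step you call ``the heart of the proof,'' transporting the holomorphic curve foliation from $L_0$ across the one-parameter families of non-compact leaves by parametric Gromov compactness and an open-closed argument, is a genuine gap, and it is precisely the step the authors state they cannot carry out: they write that they ``do not know of a way to find a holomorphic subfoliation of the filling of the Reeb component.'' There are concrete obstructions. Wendl's curves in $T^*\bT^2$ are finite-energy cylinders asymptotic to closed Reeb orbits lying over closed geodesics of the torus leaf; the neighbouring leaves of $\F_{reeb}$ are planes, whose unit cotangent bundles carry no analogous Morse--Bott families of closed Reeb orbits, so there is no asymptotic data for finite-energy curves of the same type in those leaves---the family you want to propagate into has no members. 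Relatedly, the compact torus leaf of the Reeb foliation has non-trivial holonomy in the longitudinal direction, so curves in $L_0$ whose asymptotic orbits wrap that direction do not belong to a foliated family sweeping out nearby leaves (the paper's moduli-space proposition requires trivial holonomy transport on $u_*\pi_1(\Sig)$ exactly for this reason). Your open-closed argument therefore fails to start, and ``corresponding relative homology classes across leaves'' is not meaningful when the leaves change topology from $\bT^3$ to $\R^2\times\bS^1$.

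The paper circumvents this by an indirect route. After splitting $W^\infty$ along the compact leaf $L^\infty\simeq T^*\bT^2$ into two fillings of $\bS(FT^*X_{reeb})$, it performs a surgery (Remark \ref{rem:surgeryx}, Proposition \ref{prop:reeb}): a neighbourhood of a closed transversal meeting every non-compact leaf once is excised at the level of cotangent bundles (after making the symplectic foliation a product there via Proposition \ref{prop:openstd}) and replaced so as to convert the Reeb component into the almost horizontal model $X_{ah}=\bS^1\times(A,\F_{ah})$. In that model every non-compact leaf is a cylinder with flat metric, the closed Reeb orbits form clean Morse--Bott tori, and the holomorphic cylinder foliation of Proposition \ref{prop:folcylinder} together with the Lefschetz-fibration combinatorics yields the classification of Theorem \ref{thm:ah}; the vanishing of the Luttinger constants forces the filling to be standard, and reversing the surgery finishes the proof. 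If you want to salvage a direct holomorphic argument on the Reeb component itself, you would need to supply, at minimum, a substitute for the missing closed Reeb orbits on the plane leaves---which is the open problem the surgery is designed to avoid.
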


Theorem \ref{thm:reeb} is 
proved in Section \ref{subsec:cohinv}. The proof uses the next  result (proved in Section \ref{subsec:cptleaf}), which says that in
a number of cases, a compact leaf in the contact foliation bounds a
compact leaf in the symplectic filling.
\begin{theorem}
  \label{thm:cptleaf}
  Suppose $(M^n,\F^3)$ is a compact foliated manifold with a leafwise contact
  structure and  $W^{n+1}$ is a compact weak
  symplectic foliated filling of $M$. Furthermore, suppose $L_M$ is a leaf of $M$ that is
  contactomorphic to either $\bS^3$, $\R \P^3$, a Lens space $L(p,1)$,
  or $\bT^3$ with the standard contact structure. Then, $L_M$ bounds a
  compact leaf $L$ of $W$.  The
  leaf $L$ in the filling is the canonical filling of $L_M$, except in the case when
  $L_M=L(4,1)$, where $L$ can be one of the two possible minimal
  fillings of $L(4,1)$ (see \cite{Hind:lens}).
\end{theorem}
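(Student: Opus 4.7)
The plan is to adapt, leafwise, the holomorphic-curve classifications of minimal symplectic fillings of the four model contact manifolds: Eliashberg for $\bS^3$, Hind for $\R\P^3$ and $L(p,1)$, and Wendl for $\bT^3$. Let $L$ denote the leaf of the foliation on $W$ whose closure contains $L_M$; since the codimension-one foliation on $W$ restricts to the given foliation on $M$, a collar of $L_M$ in $W$ exhibits $L$ as a connected symplectic manifold with $\partial L = L_M$, and by hypothesis leafwise a strong symplectic filling of $L_M$. After blowing down any leafwise $-1$-sphere inside $L$ using the $\bS^1$-family construction of Proposition \ref{prop:p1nbhd}, we may assume $L$ is minimal. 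Choose a leafwise almost complex structure $J$ on $W$ compatible with the leafwise symplectic form, cylindrical in a collar of $M$, and standard of contact type along $L_M$. Because $J$ preserves $T\F$, every $J$-holomorphic curve lies inside a single leaf, so $(L, J|_L)$ is a four-dimensional almost complex manifold to which the classical unfoliated theory applies.

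Next, introduce the moduli space $\M_L$ of $J$-holomorphic curves in $L$ of the topological type prescribed by the relevant classical argument: disks with boundary on a Legendrian unknot for $\bS^3$; disks with marked-point or orbifold tangency conditions for $\R\P^3$ and $L(p,1)$; finite-energy planes asymptotic to the closed Reeb orbits of the standard contact form for $\bT^3$. Each of the cited theorems shows that $\M_L$ is a smooth manifold whose evaluation map is a diffeomorphism onto the canonical filling (or one of the two minimal fillings when $L_M = L(4,1)$). Running the same analysis inside $L$ produces an open subset $X \subset L$ foliated by these curves. Openness of $X$ in $L$ is the implicit function theorem for $J$-holomorphic curves. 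For closedness, the strong symplectic hypothesis supplies a uniform cohomological energy bound $\int_u \om$, minimality of $L$ rules out $-1$-sphere bubbles, and the boundary/asymptotic anchors on $L_M$ prevent further bubbling exactly as in the classical proofs. Connectedness of $L$ then forces $X = L$, so $L$ is the image of a compact moduli space under evaluation, hence compact and diffeomorphic to the canonical filling; a leafwise Moser argument upgrades this to symplectic deformation equivalence.

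The main obstacle is verifying that the leafwise moduli space compactifies in a Hausdorff way without escaping to an adjacent leaf. Since $J$ preserves $T\F$, the leaf containing a $J$-holomorphic curve is locally constant, but a priori a $C^0$-limit of curves in $L$ could drift off $L$ through transverse holonomy. Ruling this out is precisely where closedness of $\om$ is essential: it fixes the leafwise cohomology class of curves under parallel transport across leaves, and combined with the boundary or asymptotic condition anchored on $L_M \subset L$, forces every limit curve to remain in $L$. In the $\bT^3$ case one additionally needs to import Wendl's finite-energy foliation existence and uniqueness result in a leafwise rather than closed setting, which will require a nondegeneracy check for the leafwise Reeb dynamics along $L_M$ and a careful treatment of the SFT neck stretching adapted to the foliated geometry.
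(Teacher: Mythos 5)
Your overall strategy---running the classical holomorphic-curve classifications of Eliashberg, Hind and Wendl leafwise inside the single leaf $L$ bounded by $L_M$---is the same as the paper's. But there is a genuine gap at the step where you assert that the moduli space is compact and conclude that $L$ ``is the image of a compact moduli space under evaluation, hence compact.'' The leaf $L$ is a priori a \emph{noncompact} symplectic manifold (its compactness is exactly what is being proved), and Gromov compactness for curves with a uniform energy bound fails in general on a noncompact target: a sequence of curves can simply escape to infinity inside $L$. The cohomological energy bound coming from closedness of $\om$, together with the absence of bubbles, does not by itself prevent this. What is needed---and what the paper supplies---is a monotonicity/diameter estimate: by Proposition \ref{prop:bddia}, a connected $J$-curve of energy at most $E$ whose boundary (or asymptotic limit set) lies in the compact set $L_M$ is contained in a neighbourhood of $L_M$ of size $C\cdot E$. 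That estimate is only valid on a \emph{tame} almost complex manifold, so one must also check that $L$ has bounded sectional curvature and positive injectivity radius away from a collar of $\partial L$; this holds because $L$ is a complete leaf, with bounded second fundamental form, of a foliation of the compact manifold $W$. With the diameter bound in hand, all curves of the relevant moduli space stay in a fixed compact subset, Gromov compactness applies, and $L$---being swept out by these curves---has bounded diameter and, being complete, is compact. Without this step your open--closed argument does not close up.

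By contrast, the ``main obstacle'' you single out---limits of curves drifting off $L$ onto an adjacent leaf through transverse holonomy---is not where the difficulty lies. Since $J$ preserves $T\F$, every connected $J$-curve lies in a single leaf, and any limit curve (or the main level of a limit building) inheriting boundary or asymptotic conditions on $L_M$ necessarily lies in the leaf through $L_M$, namely $L$. So no separate argument about transverse drift is required; the essential missing input is the tameness of the possibly noncompact leaf $L$ together with the monotonicity estimate.
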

This result illustrates a contrast between smooth foliations and
symplectic foliations.  If we have a smooth foliated
filling, a compact leaf intersecting the boundary can typically be
destroyed by perturbing the foliation as follows.  Suppose, in a
foliation $(W,\F)$ of codimension one, $L$ is a compact leaf that
intersects the boundary $\partial W$ transversely, and suppose a loop
$\gamma$ transverse to $\F$ intersects the leaf $L$. Then, a
neighborhood of the loop can be replaced by a Reeb component. In the
new foliation $\F'$, all the leaves of $\F$ that intersected $\gamma$
become open and the new compact leaf does not touch the boundary
$\partial W$.

 As pointed out earlier, Theorem
\ref{thm:cptleaf} implies that the filling 
$W_{Reeb}$ of the foliated sphere cotangent bundle $\bS T^*(\bS^3,\F_{Reeb})$
contains a compact leaf, which disconnects $W_{Reeb}$ 
into two pieces $W^+_{Reeb}$, $W^-_{Reeb}$, each of which is a foliated filling of the solid torus with the Reeb foliation.
Eliashberg's technique of constructing a holomorphic subfoliation by rational curves does not naturally work for the filling $W^\pm_{Reeb}$. Therefore, we transform $W^\pm_{Reeb}$
 via a surgery into a
filling of the foliated unit cotangent bundle
\[M_{ah}:=\bS T^*(\bS^1 \times (A^2,\F_{ah})),\]
where $\F_{ah}$ is an almost horizontal foliation on $A^2$.  The
filling of $M_{ah}$ is analyzed via a holomorphic subfoliation by cylinders.

Suppose $W_{ah}$ is a filling of the almost horizontally foliated
manifold $M_{ah}$. In order to analyze $W_{ah}$ using holomorphic
curves, we attach the symplectization
$\R_{\geq 0} \times M_{ah}$ to the boundary of $W_{ah}$, and  obtain a
symplectically foliated manifold $W^\infty_{ah}$ with a cylindrical
end. We prove that the manifold $W^\infty_{ah}$ is a Lefschetz
fibration
\[\pi : W^\infty_{ah} \to \R \times (A^2, \F_{ah})\]
with a foliated base space $\R \times (A^2,\F_{ah})$, 
whose regular fibers are cylinders. Singular points of the fibration
are circles transverse to the foliation, so that on any leaf the set
of singular points is discrete.  There are no singular values on the
boundary $\R \times \partial A^2$.

The combinatorial data underlying a filling that is a strong symplectic foliation differs slightly from the one with a weak symplectic foliation.
The {\em combinatorial type of a strong symplectic foliated 
  filling} consists of the following data.
\begin{enumerate}
\item {\rm (Dehn twists on boundary leaves, $k_\pm \in \Z$)} The
  boundary leaves $\pi^{-1}(\R \times \partial A^2)$ are Lefschetz
  fibrations with no singular points. Denote the outer and inner
  boundaries of $A^2$ by $\partial_+A^2$ and $\partial_-A^2$ respectively.
  There are canonical trivializations of the fibration near the ends
  $\{\pm \infty\} \times M$ obtained via the identification
  $\R_{\geq 0} \times M_{ah} \to W^\infty_{ah}$.  Therefore the
  trivialized bundles $\pi^{-1}((-\infty,\eps) \times \partial_\pm A^2)$
  and $\pi^{-1}((-\eps,\infty) \times \partial_\pm A^2)$ are glued by an
  element of the mapping class group of the cylinder, namely
  $k_\pm \in \pi_1(\R \times \bS^1) \simeq \Z$.
\item {\rm(Combinatorial data of singular loci)}
\label{part:csl}
  Each connected
  component of the singular point set of the Lefschetz fibration
  projects to a closed embedded loop $\gamma$ in the solid torus
  $\R \times A^2$. Let $\Gamma$ 
  be the collection of connected components $\gamma \subset W^\infty_{ah}$ of
  singular points.  The {\em combinatorial data of the singular loci}
  is the multi-set of {\em braid types}
  $\{[\pi(\gamma)] :\gamma \in \Gamma\}$ of the loops of singular values. By
  braid type, we mean the homotopy equivalence class of transverse embedded loops
  in the solid torus $\R \times A^2$. 
\end{enumerate}
We remark that in \eqref{part:csl} above, for a connected component
$\gamma$ of singular values, the projection $\pi(\gamma)$ is
embedded. However for distinct components
$\gamma_1, \gamma_2 \in \Gamma$, the projections $\pi(\gamma_1)$,
$\pi(\gamma_2)$ may intersect. (See Remark \ref{rem:link}.)

If the leaves in $(A^2,\F_{ah})$ are oriented by the arrows
in Figure \ref{fig:test}, then, a monodromy calculation yields the
relation
\begin{equation}
  \label{eq:dehnsum}
  k_+-k_-= \sum_{\gamma \in \Gamma}\wind (\gamma),
\end{equation}
where $\wind(\gamma) \in \Z_+$ is
the degree of the map $\bS^1 \stackrel{\gamma}{\to} \R \times A^2 \stackrel{\pi_1}{\to} \bS^1$,
 or in other words, the number of strands in the braid $\gamma$.

 In case of a weak symplectic foliated filling, a loop of
 singular values $\pi(\gamma)$ in $\R \times A^2$ is allowed to have
 self-intersections.  Consequently, a deformation of a weak symplectic
 foliated filling allows us to unknot the connected components of the
 braids.  Therefore the {\em combinatorial type of a weak symplectic
   foliated filling} consists only of
 \begin{enumerate}
 \item the Dehn twist data on boundary leaves, namely the integers $k_+, k_-$, and 
 \item the winding numbers $\wind(\gamma) \in \Z_+$ of the loops $\gamma$ of singular values. 
 \end{enumerate}

\begin{theorem}\label{thm:ah}
  Minimal strong (or weak) symplectic foliated fillings of $\bS(T^*(\bS^1 \times (A^2,\F_{ah})))$ 
  are classified by their combinatorial type up to strong (or weak) symplectic deformation equivalence.
\end{theorem}

\noindent The proof of Theorem \ref{thm:ah} is given in Section \ref{subsec:modelah}.

It is significant that Theorems \ref{thm:cptleaf} and \ref{thm:ah}
apply to both strong and weak symplectic foliated fillings, because in
general, weak symplectic forms exist for a much wider class of
foliations than the strong counterpart.  For example, a compact strong
symplectic foliation is {\em taut} in the sense that any point has a
transversal loop passing through it. So, the Reeb foliation on
$\bS^3$ does not have a strong symplectic form but it has a weak
symplectic form.  In higher dimensions, Mitsumatsu \cite{Mit} constructed a
weak symplectic foliation on $\bS^5$ that contains a compact
separating leaf, and is therefore, not taut. There is no known strong
symplectic foliation on $\bS^5$.

\textbf{Acknowledgements.} The authors thank Klaus Niederkr\"uger,
Dishant Pancholi, \'Alvaro del Pino and Chris Wendl for
discussions. FP is supported by the Spanish Research Projects
CEX2019-000904-S.554 and PID2022-142024NB-I00. SV is grateful
for the support of the grant 612534 MODULI within the 7th European
Union Framework Programme.

\section{Background}
In this section, we introduce the necessary background on symplectic and contact foliations. We extend the notions of cotangent bundles and fillings to the foliated setting.

\subsection{Symplectic and contact foliations}
A $k$-dimensional {\em foliation} on a manifold $M^n$ is an integrable
distribution $\F \subset TM$ of rank $k$.  We assume that manifolds
are oriented and foliations are co-oriented, and that all foliations
have codimension one, that is, $k=n-1$.  A {\em strong symplectic
  form} on a foliated manifold $(M, \F)$ is a closed two-form
$\om \in \Om^2(M)$ whose restriction to leaves $\om|_\F$ is
symplectic. The pair $(\F,\om)$ is called a {\em strong symplectic
  foliation} on $M$.  Two such forms $\om_0$ and $\om_1$ are
equivalent if $\om_0|_\F=\om_1|_\F$. The equivalence class of a strong
symplectic form $\om_0$ is an affine space. Indeed, if $\om_0$ and
$\om_1$ are equivalent strong symplectic forms then
$(1-t)\om_0 + t\om_1$ lies in the same equivalence class for all
$t \in \R$. A {\em trivial strong symplectic foliation} is a product
of a $(n-k)$-dimensional manifold and a $k$-dimensional symplectic
manifold, and the strong symplectic form is defined by pullback. As
opposed to a strong symplectic form, a {\em weak symplectic form} is a
two-form $\om \in \Om^2(M)$ whose restriction to the leaves $\om|_\F$
is symplectic. The form $\om$ is not required to be closed in $M$. The notion of
equivalence between weak symplectic forms is defined in the same way as above. 

On a foliated manifold $(M,\F)$, a {\em contact structure} is a
sub-distribution $\xi \subset \F$, whose rank is one less than the
rank of $\F$, and that is a contact structure on the leaves. A {\em
  contact form} is a one-form $\alpha \in \Om^1(M)$ that satisfies
$\ker \alpha \cap \F=\xi$. The {\em Reeb vector field} of a foliated
contact form on $(M,\F)$ is a vector field $R_\alpha : M \to T\F$
whose restriction to any leaf is a Reeb vector field
on the leaf.

A {\em foliated symplectomorphism} is a foliated diffeomorphism
$\phi:(W_0,\F_0,\om_0) \to(W_1,\F_1,\om_1)$ between (strong or weak)
symplectic foliations that satisfies
$\phi^*\om_1|_{\F_0}=\om_0|_{\F_0}$. A {\em foliated contactomorphism}
is a foliated diffeomorphism
$\phi:(M_0,\F_0,\xi_0) \to (M_1,\F_1,\xi_1)$ satisfying
$\phi^*\xi_1=\xi_0$. The word `foliated' is sometimes dropped when it
is clear from the context.

% \begin{proposition} \label{prop:folmoser} {\rm(Foliated Moser
%     stability, \cite[Theorem 2.3]{Hector:Moser})} Suppose $\om_0$, $\om_1$
%   are strong symplectic forms on
%   a foliated manifold $(M,\F)$
%   such that $\om_1-\om_0$ is compactly supported and 
%   $[\om_1-\om_0]=0$ in $H^2_c(M)$. 
%   Then, there is a flow $\{\phi_t\}_t \subset \Diff(M)$ tangent to
%   $\F$ such that $\phi_1^*\om_1|_\F=\om_0|_\F$. The flow $\phi_t$ is
%   identity on $M \bs \supp (\om_1-\om_0)$.
% \end{proposition}

\subsection{Symplectic foliated filling}\label{sec:sympfill}
The concept of symplectic fillings of contact manifolds extends to the
foliated setting.  Given a contact foliation $(M,\F_M,\xi)$, a {\em
  strong (or weak) symplectic foliated filling} consists of a strong (or weak) symplectic
foliation $(W,\F,\om)$ whose boundary is transverse to the foliation,
and a foliated diffeomorphism
\begin{equation}
  \label{eq:bdryid}
  i:(M,\F_M) \to (\partial W, \F|_{\partial W})
\end{equation}
such that the leaves of $W$ are symplectic fillings of the
leaves of $M$
in the sense that there is a vector field, called the {\em Liouville vector field},
\begin{equation*}
  Y \in \Gamma(\Op(\partial W),T\F), \quad Y \pitchfork \partial W,
\end{equation*}
which is outward pointing along $\partial W$ and satisfies
\begin{equation}\label{eq:liou}
  L_Y\om|_\F=\om|_\F, \quad (\ker (\iota_Y\om) \cap \F)|_M =\xi.
\end{equation}
The condition \eqref{eq:liou} means that the fillings is `strong' in every leaf 
(as in \cite[Definition 5.1.1 (c)]{Geiges:book}), but this word is suppressed in our terminology.
In this paper the word `strong' only stands for the closedness of the two-form in $W$.

Fillings can also be defined for contact foliated manifolds
$(M,\F_M,\xi)$ with boundary if the boundary is tangent to the
foliation. The filling $(W,\F,\om)$ is a manifold with corners. Its
codimension one boundary splits into two components
\begin{equation*}
  \partial_1W=\partial_\F W \cup \partial_\pitchfork W,
\end{equation*}
where the first is tangent to the foliation $\F$ and the second is
transverse to the foliation.  The contact foliation $M$ can be
identified to the transverse boundary $\partial_\pitchfork W$.  In
addition to the properties of fillings in the previous paragraph, we
additionally require that the tangential boundary $\partial_\F W$ is a
filling of $\partial M$. Thus the two components of the codimension
one boundary of $W$ intersect in a codimension two corner, which is
$\partial M$.

Cylindrical ends can be attached to a symplectic filling as follows.
Given a foliated filling $(W,\F,\om)$ of a contact foliation
$(M,\F_M,\xi)$ and a Liouville vector field $Y$, $\alpha:=\iota_Y \om$
is a foliated contact form on $M$. For a small $\eps>0$, there is a
symplectic embedding
\begin{equation*}
  ((-\eps,0] \times M, d(e^t\alpha)) \to W, \quad (t,m) \mapsto \phi_Y^t(m),
\end{equation*}
where $\phi_Y^t$ is the time $t$ flow of $Y$.  Attaching cylindrical
ends to the filling $W$ produces the foliated symplectic manifold
\begin{equation}
  \label{eq:winfty}
  W^\infty := W \sqcup_{\partial W \simeq \{0\} \times M} [0,\infty) \times M,
\end{equation}
which we call the {\em extended symplectic filling}. The
identification $i:\partial W \to M$ extends naturally to a projection
\begin{equation}
  \label{eq:proj2M}
  i : W^\infty \bs K \to M, 
\end{equation}
where $K$ is a compact set.

The next lemma shows that any two extended fillings of $M$ are related
by a foliated symplectomorphism outside a compact set.
\begin{lemma}\label{lem:phiend}
  Suppose $(W_0,\om_0)$ and $(W_1,\om_1)$ are fillings of the contact
  foliated manifold
  $(M,\F_M,\xi)$.
  % , whose boundaries are equipped with
  % identifications $i_l:\partial W_l \to M$, $l=0,1$.
  Then, there are compact sets $K_l \subset W^\infty_l$, for $l=0,1$,
  and a foliated symplectomorphism
  $\phi:W_0^\infty \bs K_0\to W_1^\infty \bs K_1$ satisfying
  $i_1 \circ \phi = i_0$, where $i_l : W^\infty_l \bs K_l \to M$ is
  the projection as in \eqref{eq:proj2M}.
\end{lemma}
\begin{proof}
  Suppose, for $l=0,1$, $\alpha_l \in \Om^1(M)$ is the contact form
  induced by the Liouville vector field in $W_l$. Since $\alpha_0$,
  $\alpha_1$ represent the same contact structure on $M$, there is a
  map $F:M \to \R$ such that
  $\alpha_1|_{\F_M}=e^F\alpha_0|_{\F_M}$. Then, $\phi$ can be defined
  on the cylindrical ends of $W_1^\infty$ as
  \begin{equation*}
    \phi: W_1^\infty \bs W_1 \to  W_0^\infty \bs W_0, \quad 
    (a,m) \mapsto (a+F(m),m).
  \end{equation*}
\end{proof}
The (strong or weak) foliated fillings $(W_0,\F_0, \om_0)$ and $(W_1,\F_1, \om_1)$ are
{\em symplectic deformation equivalent relative to ends} if the map
$\phi$ in Lemma \ref{lem:phiend} extends to a foliated diffeomorphism
$\phi:W_0^\infty \to W_1^\infty$ and there is a path of (strong or weak) 
symplectic forms $\ol \om_t$ on $W^\infty_0$ and a compact set
$K \subset W^\infty_0$ such that
\begin{equation}\label{eq:defrel}
  \ol \om_0=\om_0, \quad \ol \om_1=\phi^*\om_1, \quad \ol \om_t|_{\F_0}=\om_0|_{\F_0} \quad \forall t \text{ on }W^\infty_0 \bs K.
\end{equation}
The fillings $(W_0,\om_0)$ and $(W_1,\om_1)$ are {\em symplectomorphic
  relative to ends} here if $\phi$ extends to a foliated
symplectomorphism $\phi:W_0^\infty \to W_1^\infty$. The terms
`symplectomorphism' and `symplectic deformation equivalence', when
used in the context of non-compact manifolds, mean that they are
relative to non-compact ends.

\subsection{Foliated cotangent bundle}\label{subsec:tstar}
We now give a class of examples for symplectic and contact foliations.
Let $X$ be a manifold of dimension $n$, and $\F$ is a foliation of
dimension $k$. The {\em foliated cotangent bundle} $T^*\F$ is a vector
bundle $\pi:T^*\F \to X$ with pullback foliation $\pi^*\F$.
We construct a closed two-form on $T^*\F$ that is equal to the canonical
symplectic form on the leaves.
By choosing a splitting of the tangent bundle $TX=T\F \oplus T\F^\perp$,
a cotangent vector on the foliation $\xi \in T^*\F$ extends to a cotangent vector $\hat \xi$ in $T^*X$.
%
% sv: I reversed your change here, because you don't say that e(\xi) restricts to \xi on T\F
% we obtain a map $e:T^*\F \to T^*X$
%
The one-form $\lam \in \Om^1(T^*\F)$ defined as 
\begin{equation*}
   \lam_\xi(v):= \hat \xi(d\pi(v)) \quad \text{for }\xi \in T^*\F,  v\in T(T^*\F)
 \end{equation*}
 restricts to the Liouville form on each leaf.
 The two-form
 \[\om_\can:=d\lam\]
 restricts to the
canonical symplectic form on the leaves of $T^*\F$. When the foliation
on $X$ is unambiguous, the foliated cotangent bundle is denoted by
$FT^*X$.

In a similar way, the {\em sphere cotangent bundle} $\bS(T^*\F)$ is a
foliated contact manifold. Choose a leafwise Riemannian metric $g$ on
$(X,\F)$. Then,
\begin{equation*}
  \bS(T^*\F):=\{v \in T_x^*\F:x \in X, \Mod{v}_g=1\},
\end{equation*}
with contact form
\[\alpha:=\lam|_{\bS(T^*\F)}\in \Om^1(\bS(T^*\F)).\]
The contact structure on
$\bS(T^*\F)$ is independent of the choice of the leafwise metric $g$,
because $\bS(T^*\F)$ can equivalently be defined as the double cover
of the projectivization $\P(T^*\F)$ with contact structure $\xi$ given
by
\begin{equation*}
  \xi_p:=\{v \in T_p\F_{\P(T^*\F)}:p(\pi_* v)=0\}, \quad p \in \P(T^*\F).
\end{equation*}
The disk bundle $\bD(T^*\F)$ associated to the cotangent space $T^*\F$
is a leafwise strong filling for the unit sphere cotangent bundle
$\bS(T^*\F)$.

\subsection{Foliated blow-ups and blow-downs}\label{subsec:blowdown}
The discussion in Sections \ref{subsec:blowdown} and \ref{subsec:ham}
applies only to strong symplectic foliations. In this section, we
extend the notion of symplectic blowup and blow-down to the foliated
setting, which essentially means, carrying out these operations in an
$\bS^1$-family.  We restrict our dicussion to the case of four-dimensional leaves. Let $(M,\F,\om)$ be a five-manifold with a strong
symplectic foliation of codimension one.

\begin{definition}
  Let $(M,\F)$ be a codimension one foliation, and let $I=[0,1]$ or
  $\bS^1$. 
  A {\em foliated embedding} is an embedding $f : I \times U \to M$ such that
  $\{point\} \times U$ maps to a leaf of $\F$ in $M$. 
\end{definition}

We recall that in 
 a four-dimensional symplectic manifold $(M,\om)$, an {\em
  exceptional sphere} $E \simeq \P^1 \subset M$ (also called a
$(-1)$-sphere) is a symplectic submanifold whose self-intersection is
$-1$. A {\em foliated family of exceptional spheres}
 in a foliated symplectic manifold $(M,\F,\om)$ 
is a foliated embedding
$i:\bS^1 \times \P^1 \hra M$ such that $\{t\} \times \P^1$ is an
exceptional sphere in a leaf. 

Symplectic blow-up can be carried out at a point in the symplectic
manifold with dimension at least four. Analogously on a foliated
symplectic manifold, blow-up is carried out along a closed
transversal, i.e. a loop $\gamma :\bS^1 \to M$ that is transverse to
leaves. Indeed, by Proposition \ref{prop:openstd} below, the foliated
symplectic form $\om$ can be altered without changing $\om|_\F$ so
that the transversal is tangent to the line field $\ker \om$.  A
neighbourhood of the loop is now just a product $\bS^1 \times \bD^4$
with the trivial symplectic foliation, and the blow-up can be carried
out fiberwise.  This process is called {\em foliated blow-up}, and it
can be carried out on any strong symplectic foliation. Therefore,
uniqueness results for foliated symplectic fillings can only be
obtained by restricting attention to {\em minimal fillings}, i.e ones
which do not contain any symplectic exceptional sphere.

In the non-foliated case (see McDuff \cite{mcduff:immersed}),
exceptional spheres can be blown down, and the next result shows that the same is true in a strong symplectic foliation.

\begin{proposition}
  \label{prop:blowdown}
{\rm(Foliated blow-down)} 
  Let $(W^\infty,\F,\om)$ be a strong symplectic foliation
  that is a filling of the unit cotangent bundle of a compact contact foliation $M$, and let
  $S \subset W^\infty$ be
  an exceptional sphere in a leaf.
  \begin{enumerate}
  \item \label{part:bda} Then, $S$ is part of a foliated $\bS^1$-family of exceptional
    spheres given by the foliated embedding $\bS^1 \times S \to M$.
  \item \label{part:bdb} The $\bS^1$-family of exceptional spheres can be blown down to produce a strong symplectic foliation $(M',\F',\om')$ in the following sense: A foliated blow-up of $(M',\F',\om')$ is foliated symplectomorphic to
    $(M,\F,\om)$. 
  \end{enumerate}
\end{proposition}
\begin{proof}
  We choose a leafwise tame almost complex $J$ for which the given
  sphere is holomorphic. By automatic transversality in four
  dimensions, the moduli space of embedded holomorphic $(-1)$-spheres
  is 1-dimensional.  By Gromov compactness in the foliated setting,
  the moduli space is a finite number of copies of $\bS^1$. For any
  copy of $\bS^1$ in the moduli space, each sphere in the family is
  embedded in a leaf.  Furthermore, by Lemma
  \ref{lem:exceptional-embed}, isolated intersections between two
  spheres are ruled, and consequently the family $\bS^1 \times S$ of
  exceptional spheres is embedded in $W^\infty$, which proves
  \eqref{part:bda}. By Proposition \ref{prop:p1nbhd} below, there is a
  foliated symplectic form $\om_1$ such that $\om_1|_\F=\om|_\F$, and
  for which the restriction of $(\F,\om_1)$ to a neighbourhood of the
  spheres is a trivial symplectic foliation (as in Definition
  \ref{def:trivfol}). Blow-down can then be performed fiberwise, and
  \eqref{part:bdb} follows.
\end{proof}

\subsection{Hamiltonian perturbation of a foliated symplectic form}
\label{subsec:ham}
In this section, we show that a family of exceptional spheres in a strong symplectic foliation has a standard product neighborhood, up to a foliated symplectomorphism. In particular, we perturb the strong symplectic form without changing its restriction on the leaves in order to achieve the standard form in the neighborhood of the family of exceptional spheres.

% we prove that if a subset $S$ of a strong symplectic
% foliated manifold $(M^{2n+1},\F^{2n},\om)$ satisfies certain
% hypotheses, then there is a foliated symplectic form $\om_1$ that is
% equal to $\om$ on $\F$, and for which $S$ is a trivial symplectic
% foliation (as defined below). The discussion is restricted to
% codimension one foliations.

\begin{definition}\label{def:trivfol}
  \begin{enumerate}
  \item \label{part:tfa}
    {\rm(Fibrations as foliations)} A foliation $(M,\F)$ with a
    leafwise symplectic form $\om \in \Om^2(T\F)$ is a {\em symplectic
      fibration} if $M \to M/\F$ is a fiber bundle, whose fibers are
    symplectomorphic. We denote the fiber by $F$.  Such a foliation
    $(M,\F)$ is a {\em trivial symplectic fibration} if there is a
    foliated diffeomorphism $M \simeq (M/\F \times F)$.
  \item \label{part:tfb}
    {\rm(Trivial symplectic foliation)} A strong symplectic
    foliation $(M,\F,\om)$ is {\em trivial} if it is a trivial
    symplectic fibration, that is, $M \simeq (M/\F \times F)$, and
    $\om$ is the pullback of a symplectic form on $F$.
  \end{enumerate}
\end{definition}

The following proposition says that if a subset of a strong symplectic
foliation is a trivial fibration, there is a strong symplectic form
that is equal on the leaves, for which the subset is a trivial
symplectic foliation.

\begin{proposition}\label{prop:openstd}
  Suppose $(M,\F,\om)$ is a codimension one strong symplectic
  foliation, and $i:I \times U \to M$ is a foliated embedding where
  $I=\bS^1$ or $[0,1]$ and $U$ is a simply connected open manifold of
  dimension $\dim(\F)$.  Suppose $i_t^*\om$ is independent of $t$ for
  all $t \in I$. Then, given a compact subset $V \subset U$, there is
  a strong symplectic form $\om_1 \in \Om^2(M)$ with
  $\om_1|_\F=\om|_\F$ such that $i^*(\F,\om_1)$ is a trivial strong
  symplectic foliation on the product $I \times V$.
\end{proposition}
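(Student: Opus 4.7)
The plan is to add to $\om$ an exact $2$-form, supported near $i(I \times V)$, that kills the transverse component of $i^*\om$ there. Pulling $\om$ back along $i$ and using the hypothesis that the leafwise restriction is independent of $t$, we can decompose
\[
  i^*\om = \sigma + dt \wedge \beta_t,
\]
where $\sigma \in \Om^2(U)$ is the fixed leafwise symplectic form and $\{\beta_t\}_{t \in I}$ is a smooth family of $1$-forms on $U$. From $d(i^*\om)=0$ and $d\sigma = 0$ (as $\sigma$ is symplectic) one obtains $d_U\beta_t = 0$ for every $t$, where $d_U$ denotes the leafwise exterior derivative.

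Using the simple connectedness of $U$, fix a basepoint $p_0 \in U$ and a smooth $u$-dependent family of paths $\gamma_u$ from $p_0$ to $u$, and set $H_t(u) := \int_{\gamma_u}\beta_t$. By path-independence this produces a smooth function $H \in C^\infty(I \times U)$ with $d_U H_t = \beta_t$, so that
\[
  d(H\,dt) = dH_t \wedge dt = -dt \wedge \beta_t, \qquad \text{i.e.}\qquad \sigma = i^*\om + d(H\,dt).
\]
Pick a cutoff $\chi \in C^\infty_c(U)$ with $\chi \equiv 1$ on $V$ and let $\eta \in \Om^1(M)$ be the extension by zero of $i_*(\chi H\,dt)$; smoothness follows from the compact $U$-support of $\chi$. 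Define $\om_1 := \om + d\eta$.

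Then $\om_1$ is closed by construction, and $\om_1|_\F = \om|_\F$ since $d\eta = d(\chi H)\wedge dt$ has $dt$ as a factor while $dt$ vanishes on leaves; in particular $\om_1$ is still a strong symplectic foliation. On $I \times V$, where $\chi \equiv 1$ and $d_U\chi = 0$, we compute $i^*(d\eta) = d_U H \wedge dt = -dt\wedge \beta_t$, giving $i^*\om_1 = \sigma$, which is precisely the trivial strong symplectic fibration $(I\times V, \sigma|_V)$.

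The only genuinely nontrivial step is the construction of the family $\{H_t\}$ with smooth $t$-dependence, and this is exactly where the simple connectedness of the slices is essential. Everything else is routine bookkeeping with cutoffs; when $I = [0,1]$ one implicitly assumes the foliated embedding $i$ extends to a slightly larger interval $(-\eps, 1+\eps) \times U$ so that the extension by zero remains smooth near the temporal endpoints, a standard adjustment that does not affect the conclusion on $I \times V$.
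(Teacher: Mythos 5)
Your proof is correct and follows essentially the same route as the paper's: decompose the difference between $i^*\om$ and the trivial ($t$-independent) extension of the leafwise form as a wedge with $dt$, use simple connectedness of $U$ to produce a leafwise primitive, and cut off with a function supported in $U$. The only minor difference is that you explicitly address smoothness of the extension by zero at the endpoints when $I=[0,1]$, a point the paper's proof glosses over.
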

\begin{proof} Choose any $t \in I$, and let
  $\tilde \om \in \Om^2(I \times U)$ be the pullback of
  $i_t^*\om \in \Om^2(U)$. The forms $\tilde \om$ and $i^*\om$ agree
  on the foliation. So, we can write
  \begin{equation*}
    \tilde \om - i^*\om=\alpha \wedge dt, \quad \text{where } \alpha:=i_{\ppt}(\tilde \om - i^*\om).  
  \end{equation*}
  By the closedness of the forms $\tilde \om$, $i^*\om$, we obtain
  $d\alpha \wedge dt=0$. This condition implies $d\alpha|_\F=0$. Since
  $U$ is simply connected, there is a function $f:I \times U \to \R$
  so that $df|_\F=\alpha|_\F$, and consequently,
  \begin{equation*}
    \tilde \om - i^*\om=df \wedge dt \quad \text{on } I \times U.
  \end{equation*}
  Let $\eta:U \to [0,1]$ be a cut-off function that is $1$ on $V$ and
  $0$ outside a neighbourhood of $V$ in $U$. The required form $\om_1$
  is defined as $\om+(i^{-1})^*(d(\eta f) \wedge dt)$ on the image of
  $i$, and as $\om$ outside $\im(i)$.
\end{proof}

 Let $\Bl_\eps \C^2$ be the point blow-up of $\C^2$ equipped a
K\"ahler form $\om_\eps$ that integrates to $\eps$ on the exceptional
sphere.  By the symplectic neighborhood theorem, the neighborhood of
an exceptional sphere $E$ in any manifold $(M,\om)$ is
symplectomorphic to a neighborhood of the exceptional divisor in
$\Bl_\eps\C^2$, where $\eps:=\int_E \om$. The following is a family
version of this symplectomorphism.

\begin{proposition}\label{prop:p1nbhd}
{\rm(Family of exceptional spheres has a product neighbourhood)}
Suppose $(M^5,\F^4,\om)$ is a strong symplectic foliation. Let
$Z \subset M$ be such that the restriction $(\F,\om)|_Z$ is a
fibration by symplectic spheres, and each of the spheres is an
embedded $(-1)$-sphere in the leaf containing it and has area
$\eps>0$.  Then, there is a closed form $\om_1 \in \Om^2(M)$ with
\[\om_1|_\F=\om|_\F,\]
and a neighborhood $\Op(Z) \subset M$ of $Z$ such that
$(\Op(Z), \F, \om_1)$ is a trivial symplectic foliation whose leaves
are symplectomorphic to a neighborhood of the exceptional divisor in
$\Bl_\eps\C^2$.
\end{proposition}
\begin{proof}
  By the hypothesis, $(Z, \F|Z,\om|_{\F \cap Z})$ is a symplectic
  fibration with fiber $\P^1$ and symplectic form $\sig$. The
  fibration is trivial because $\Symp(\P^1,\sig)$ is homotopic to
  $\Diff(\P^1)$, the space of orientation-preserving diffeomorphisms,
  which is connected. Next, we claim that a neighborhood
  $\Op(Z) \subset M$ is leafwise symplectomorphic to a neighborhood of
  the exceptional spheres in
  $\bS^1 \times (\Bl_\eps \C^2 , \om_\eps)$.  For a single leaf of
  $\Op(Z)$, the symplectomorphism is via the symplectic neighbourhood
  theorem. The isomorphism for the $\bS^1$-bundle follows from the
  fact that an $\bS^1$-family of rank two symplectic vector bundles
  $E \to \P^1$ is just the product $\bS^1 \times E$, since the space
  $\Aut(E)$ of bundle isomorphisms of a rank two symplectic vector
  bundle is connected. \footnote{The space of symplectic bundle
    isomorphisms is homotopy equivalent to the subspace of
    isomorphisms which are also metric preserving. The latter space is
    $\Map(\P^1,S^1)$.}  Finally, by Proposition \ref{prop:openstd},
  there is strong symplectic form $\om_1$ with $\om|_\F=\om_1|_{\F}$
  which makes $\Op(Z)$ a trivial symplectic foliation.
\end{proof}

\section{Holomorphic foliations}\label{sec:holfol}
We define holomorphic curves on foliated symplectic manifolds with
cylindrical ends and analyze symplectic fillings in the almost
horizontal case.

\subsection{Holomorphic curves} \label{subsec:holc} Almost complex
structures are defined on the leaves of the foliation and holomorphic
curves lie on the leaves. We use the same terminology as the
corresponding notions in unfoliated manifolds.  Later in this section,
we define the notion of holonomy transport on a foliated anifold. To
define moduli spaces of holomorphic curves, we will restrict attention
to holomorphic curves on which the holonomy transport is trivial.

\begin{definition}\label{def:Jcyl}
  {\rm(Cylindrical almost complex structures)}
  \begin{enumerate}
  \item On a foliated symplectic manifold $(W,\F,\om)$, an {\em almost
      complex structure} is a bundle automorphism $J:T\F \to T\F$
    compatible with $\om|_\F$ and satisfying $J^2=-\Id$.
  \item {\rm(Almost complex structures on symplectizations)} Suppose
    $(M,\F,\xi)$ is a foliated contact manifold, and let
    $\alpha \in \Om^1(M)$ be a contact form whose Reeb vector field is
    $R_\alpha$.  In the symplectization $\R \times M$, the tangent
    space of the leaves admits a splitting
    $T(\R \times \F)=\R \oplus R_\alpha \oplus \xi$.  Let $\pa$ denote
    the unit vector field in the $\R$-direction.  The space of {\em
      cylindrical almost complex structures}, denoted by
    $\J_\alpha(\R \times M)$, consists of compatible almost complex
    structures $J$ on $\R \times M$ that satisfy
    \begin{enumerate}
    \item $J$ is invariant under $\R$-translations in $\R \times M$,
    \item $J\pa=R_\alpha$,
    \item $J\xi=\xi$, and $J|\xi$ is compatible with $d\alpha$.
    \end{enumerate}
  \item {\rm(Almost complex structures on manifolds with cylindrical
      end)} A non-compact foliated symplectic manifold
    $(W^\infty,\F,\om)$ possesses {\em cylindrical ends} if its non-compact
    ends are symplectomorphic to
    $(\R_{\geq 0} \times M,d(e^t\alpha))$, where $(M,\F_M,\alpha)$ is
    a foliated contact manifold.  Given a cylindrical almost complex
    structure $J_0$ on the symplectization $\R \times M$, we denote by
    \[\J_\om(W^\infty,J_0)\]
    the space of almost complex structures on $W^\infty$ that restrict
    to $J_0$ on the cylindrical ends, and that are $\om$-compatible on
    $W^\infty \bs (\R_{\geq 0} \times M)$
  \end{enumerate}
\end{definition}

We consider pseudoholomorphic maps defined on punctured Riemann
surfaces that are tangent to the foliation and which are
asymptotically close to Reeb cylinders in the neighborhoods of
punctures.  A {\em Reeb cylinder} is a holomorphic curve in
$\R \times M$ that projects to a Reeb orbit $\gamma: \R/T\Z \to M$;
up to translation in the domain, such a Reeb cylinder is of the form
\[\tilde \gamma: \R \times \R/\Z \to \R \times M, \quad (s,t) \mapsto (Ts, \gamma(Tt)). \]
A {\em punctured Riemann surface} $\oSig$ is the
complement $\Sig \bs \{z_1,\dots,z_k\}$ of a finite set of points in a
compact Riemann surface $(\Sig, j_\Sig)$.
Given an almost structure
$J$ on $(W^\infty,\F)$ that is cylindrical on the end
$\R_{\geq 0} \times M$, a {\em $J$-holomorphic map}
$u: \oSig \to W^\infty$ is tangent to the foliation $\F$, is
$(j,J)$-holomorphic, and is asymptotically close to Reeb cylinders near punctures in the following sense:
\begin{definition} {\rm(Asymptotically cylindrical)} A map
  $u: \oSig \to W^\infty$ lying in a leaf $L$ is {\em asymptotically
    cylindrical} if at any puncture point $z_i$ of $\oSig$ with
  complex coordinates
  $(s_i,t_i): \Op(z_i) \bs \{z_i\} \to \R_{\geq 0} \times S^1$ if
  there is a Reeb orbit $\gamma: \R/\Z\to M$ with period $T > 0$, a
  point $a \in \R$, and constants $c, \delta>0$ such that
  \begin{equation}
    \label{eq:exp-decay}
    d_\F(u(s_i,t_i), (a + Ts_i, \gamma(T t_i)) \leq ce^{-\delta s},  
  \end{equation}
  where $d_\F$ is a leafwise distance metric on $W^\infty$. (Note that
  the Reeb orbit $\gamma$ lies in the same leaf as $u$.)
\end{definition}

We assume without mention that holomorphic maps are asymptotically
cylindrical. This condition is equivalent to the Hofer energy being
finite; the equivalence is proved by Proposition \ref{prop:energyhomology} and Theorem
\ref{thm:rem-fol} in Section \ref{sec:hol-prop}.
The holomorphic maps we consider also have trivial holonomy transport, as defined next.

\begin{definition}\label{def:holonomy}
  Let $W$ be a manifold with a foliation $\F$ of codimension one. A
  path $\tau:(-\eps,\eps) \to W$ that is transverse to the foliation
  $\F$ is called a {\em transversal}.
  \begin{enumerate}
  \item {\rm(Foliated homotopy of transversals)} \label{part:hol1}
    Two transversals
    $\tau_0, \tau_1:(-\eps,\eps) \to W$ are related by {\em
      foliated homotopy} if there exists a smooth map 
    $\Gamma : [0,1] \times (-\eps,\eps) \to W$ such that
    $\Gamma(0,\cdot)=\tau_0$, $\Gamma(1,\cdot)=\tau_1$ and for any
    $t \in (-\eps,\eps)$, the image of $\Gamma(\cdot,t)$ is
    contained in a leaf.
  \item {\rm(Holonomy transport)} \label{part:hol2}
    Suppose $\gamma:\bS^1 \to L$ is a
    loop in the leaf $L$ of a foliated manifold $(W,\F)$, where the
    foliation is of codimension one. Let $\tau$ be a transversal
    through $\gamma(0)$ with $\tau(0)=0$.  For a small enough
    $\eps>0$, there is a foliated homotopy of transversals
    $\{\tau_t:(-\eps,\eps) \to W \}_{t \in [0,1]}$ satisfying
    $\tau_t(0)=\gamma(t)$ for all $t \in [0,1]$, whose starting point
    is $\tau_0:=\tau|(-\eps,\eps)$, and final point is
    $\tau_1=\tau \circ h$ for an embedding $h:(-\eps,\eps) \to \R$
    with $h(0)=0$.  The map $h$ is independent of $\gamma$ up to
    homotopy, and therefore, we obtain a group homeomorphism
    \begin{equation}
      \label{eq:def-ht}
      \pi_1(L,x_0) \to \on{Homeo}(\tau), \quad [\gamma] \mapsto h,  
    \end{equation}
    which is called the {\em holonomy transport along $\gamma$}. Here,
    $\on{Homeo}(\tau)$ is the group of germs of self-homeomorphisms of
    $\tau$ that fix the point $L \cap \tau$. See p.141 in Calegari
    \cite{Calegari}.
  \end{enumerate}
\end{definition}

We will work with holomorphic curves in leaves of a symplectic
foliation. The space of such maps is a foliated manifold as we now
explain.  Let $\B:=\Map_\F(\Sig,W)$ denote the space of asymptotically cylindrical
smooth maps $u:\Sig \to W$ whose images are tangent to the foliation 
and have trivial
holonomy transport.  Consider a map $u \in \B$. Let $\bt$ be a
transverse coordinate defined in the neighbourhood of the image of
$u$, whose level sets are tangent to the foliation. Such a coordinate
exists because of the trivial holonomy condition.  There is a
splitting of the tangent space
\begin{equation}
  \label{eq:splitB}
  T_u\B=\Gamma(\Sig,u^*T\F) + \R\partial_{\bt}.
\end{equation}
In a neighbourhood of a map $u \in \B$, we can define a transverse
coordinate $\bt_\B$ on $\B$ as $\bt \circ u$.  After extending $\B$ to
Sobolev completions of maps (defined in Section \ref{subsec:fred}), it
is a foliated Banach manifold.
% Suppose $u:\Sig \to (W^\infty,\F)$ is a foliated $J$-map, whose
% image lies in a leaf $L \subset W^\infty$.
The leafwise linearized Cauchy-Riemann operator of a holomorphic map
$u \in \B$ is defined between the spaces
\begin{equation*}
  D_u^\F:\Gamma(\Sig,u^*T\F) \to \Om^{0,1}(\Sig,u^*T\F).
\end{equation*}
A foliated $J$-map $u$ is {\em unobstructed} if $D_u^\F$ is
surjective. Results about the manifold structure of the moduli space
of holomorphic curves in the non-foliated setting carry over to this
setting.  We remark that if a map $u:\Sig \to (W,\F)$ does not have
trivial holonomy, it can not be `transported' to neighbouring leaves,
and the moduli space can not be a foliated manifold.

\subsection{Holomorphic curves in
  symplectizations}\label{subsec:symplectization}

In this section, we study holomorphic curves in the symplectization of
the foliated unit cotangent bundle of a three-manifold with an almost horizontal foliation. 

We consider the three-manifold
\begin{equation}
  \label{eq:defX}
  X:=\bS^1 \times A^2,  
\end{equation}
where $A^2 \subset \R^2$ is an annulus.  The foliation on $X$ is the
pullback of the {\em almost horizontal foliation} on the annulus $A^2$,
which is uniquely described by two conditions (see for example \cite{Rei}), namely that
\begin{enumerate*}
\item the two boundary components of $A^2$ are compact leaves, and all
  the other leaves are non-compact,
\item there is an orientation of the foliation that extends the
  counter-clockwise orientation of the boundary leaves.
\end{enumerate*}
The foliation $\F_{ah}$ on the annulus can be concretely described as
follows.  Define a foliation on the strip $[-1,1] \times \R$ whose
leaves are $\{y+c= \tan \frac {\pi x} 2\}_{c \in \R}$, and
$\{x=\pm 1\}$. Then, $(A^2,\F_{ah})$ is the quotient of the strip by
the translation action of $\Z$ on the second coordinate. This
foliation has the property that there is a coordinate
\begin{equation*}
  q_2:A^2 \to \R/\Z
\end{equation*}
whose level sets are transverse to leaves, which we call the {\em leaf
  coordinate}.  In our concrete description, we can take $q_2:=y$.
The foliation on the $3$-manifold $X$ is the pull-back of $\F_{ah}$,
which by abuse of notation, we call an {\em almost horizontal
  foliation} and denote by $\F_{ah}$.

A {\em holomorphic foliation} of a foliated symplectic manifold
$(W,\F)$ is a subfoliation of $\F$ by punctured holomorphic curves
that are asymptotically cylindrical at punctures.  The almost complex
structure is assumed to be cylindrical on the ends, and as a result,
the holomorphic subfoliation can be chosen to be $\R$-invariant on the ends.

\begin{proposition}\label{prop:folcylinder}{\rm(Holomorphic foliation on symplectization)}
  Let $(X,\F):=\bS^1 \times (A^2,\F_{ah})$. There is a contact form
  $\alpha$ on the foliated unit cotangent bundle $M:=\bS(FT^*X)$ and a
  cylindrical almost complex structure $J \in \J_\alpha(\R \times M)$
  for which $\R \times M$ has an $\R$-invariant holomorphic foliation by
  cylinders.
 Furthermore, for any leaf in the holomorphic foliation that is not a Reeb cylinder, the ends are asymptotic to Reeb orbits in the positive end, namely $\{\infty\} \times M$.
\end{proposition}
\begin{proof}
  As discussed in Section \ref{subsec:tstar}, a leafwise contact form
  on the sphere cotangent bundle $\bS(FT^*X)$ is determined by a
  choice of leafwise Riemannian metric on $X=\bS^1 \times A^2$.
  Let
  \[(q_1,q_2,\tau) : \bS^1 \times A^2 \to \bS^1 \times \bS^1 \times [0,1]\]
  be a diffeomorphism. Then, $(q_1,q_2)$ induce local coordinates on
  neighborhoods in leaves, and we choose the leafwise Riemannian
  metric to be $dq_1^2 + dq_2^2$. We remark that the level sets of
  $\tau$ are not tangent to the foliation.

  We next choose suitable coordinates on the sphere cotangent bundle.
  Let $(p_1,p_2)$ be the co-ordinates on the fibers of $T^*\F$
  corresponding to the coordinates $(q_1,q_2)$ on the leaves of $X$,
  so that the symplectic form on $T^*\F$ is $\sum_i dp_i \wedge dq_i$.
  The unit cotangent bundle $M$ is the hypersurface
  $\{p_1^2+p_2^2=1\}$ of $T^*\F$.  Thus $M$ has coordinates
  $(q_1,q_2,\theta,\tau)$, where $\theta$ is given by
  $(\cos \theta,\sin \theta)=(p_1,p_2)$, and $(q_1,q_2,\theta)$ induce
  local coordinates on neighborhoods in leaves.  Using the Liouville
  vector field $v=\sum_i p_i\partial_{p_i}$, we obtain a contact form
  and Reeb vector field on $M$:
  \begin{equation*}
    \alpha=\cos \theta dq_1 + \sin \theta dq_2, \quad R_\alpha=\cos \theta \partial_{q_1} + \sin \theta \partial_{q_2}.
  \end{equation*}
  Here, we note that the vector fields
  $\partial_{q_1}, \partial_{q_2}, \partial_\theta \subset T\F$ and
  are obtained from the local coordinates $(q_1,q_2,\theta)$ on
  neighborhoods in leaves. The Reeb orbits are confined to level
  sets of $\theta$. For two of the level sets $\theta=0,\pi$, all Reeb
  orbits are closed with period $2\pi$; they are Morse-Bott
  submanifolds as in Definition \ref{def:mb} and are denoted by
  \begin{equation}
    \label{eq:mp0}
    \mP_0:=\{\theta=0\}, \quad \mP_\pi:=\{\theta=\pi\} \subset M.
  \end{equation}
  We now describe the foliation by holomorphic curves.  Choose an
  almost complex structure $J_0 \in \J_\alpha(M)$ as
  \begin{equation}\label{eq:cylJ}
    J_0\partial_a=R_\alpha, \quad J_0\partial_\theta=-\sin \theta \partial_{q_1} + \cos \theta \partial_{q_2}.
  \end{equation}
  The leaves of the holomorphic foliation are the connected components
  of the fibers of the map
  \begin{equation}\label{eq:holfib}
    \pi_{hol}: \R \times M \to \R \times (A^2,\F_{ah}), \quad (a,(q_1,q_2,\theta,\tau)) \mapsto (e^a\sin \theta, (q_2,\tau)).
  \end{equation}

  To examine the asymptotic behavior of the leaves of the holomorphic
  foliation, we use a different set of coordinates.  The
  symplectization $\R \times M$ may be viewed as the cotangent bundle
  $FT^*X$ minus the zero section, and there are coordinates on
  $\R \times M$ which induce a homeomorphism
    \begin{equation}
      \label{eq:coord}
       ((p_1,p_2),(q_1,q_2,\tau)) : \R \times M \to (\R^2 \bs \{0\}) \times X, 
    \end{equation}
    given by $p_1=e^a \cos \theta$, $p_2=e^a \sin \theta$, and  $J_0 \partial_{p_i}=\partial_{q_i}$ for $i=1,2$. 
The map $\pi_{hol}$ can be rewritten as 
\[\R \times M \ni (p_1,p_2, q_1,q_2, \tau) \mapsto (p_2,q_2,\tau) \in \R \times A^2.\]
The inverse image of $\{p_2=0\}$ consists of two components, each of them is a family of Reeb cylinders that project to Reeb orbits in $\mP_0$ and $\mP_\pi$. 
If $p_2 \neq 0$, the fiber over $(p_2,q_2,\tau)$ is
a holomorphic cylinder with a parametrization
\[u_{(p_2,q_2,\tau)}: \R \times \R/2\pi \Z \to (\R^2 \bs \{0\}) \times X, \quad (s,t) \mapsto ((p_1=s,p_2),(q_1=t,q_2,\tau)).\]
For each of these cylinders, the limits $\lim_{s \to \pm \infty}u_{(p_2,q_2,\tau)}(s,t)$ are Reeb orbits contained in $\{\infty\} \times \mP_0$ and $\{\infty\} \times \mP_\pi \subset \ol \R \times M$.  
% 
% Thus, 

%   {\color{red} Just to check: The kernel of the differential of the projection is given by $\langle\cos \theta \partial_a -\sin\theta \partial_{\theta}, \partial_{q_1} \rangle$. Thus
%   $$J_0(\cos \theta \partial_a -\sin\theta \partial_{\theta})= \cos \theta( \cos\theta \partial_{q_1}+\sin \theta \partial_{q_2})- \sin \theta (-\sin \theta \partial_{q_1} + \cos \theta \partial_{q_2})= \partial_{q_1}$$}
  
  % All the leaves are holomorphic cylinders.  The fibers corresponding to $\theta=0,\pi$ are Reeb
  % cylinders. For every other cylinder, the ends are asymptotic to a
  % Reeb orbit in $\{\theta=0\}$ and one in $\{\theta=\pi\}$.
\end{proof}

\begin{remark}{\rm(A geometric interpretation of the Reeb vector
    field)}
  A geodesic on a Riemannian manifold lifts to a flow line of the Reeb
  vector field on the unit cotangent bundle, see Geiges \cite[Theorem
  1.5.2]{Geiges:book}. The leaves of the 3-manifold $X$ have a
  Riemannian metric $dq_1^2 + dq_2^2$. Geodesics are curves
  $\gamma:I \to X$ lying on a leaf for which the ratio
  $\frac {dq_2(\gamma')} {dq_1(\gamma')}$ is constant. A geodesic
  $\gamma$ for which the ratio is $\lam$ lifts to a Reeb flow line
  $(\gamma,\theta=\tan^{-1}(\lam))$ in the foliated unit cotangent
  bundle $M$. On a non-compact leaf of $M$, the only flow lines that
  close up are those for which $q_2=constant$, and so, $\theta=0, \pi$.
\end{remark}

\begin{remark} \label{rem:sympmod}
  \begin{enumerate}
  \item {\rm (On the moduli space of cylinders in the
      symplectization)} Let $\M^{\R \times M}$ denote the leaf space
    of the above holomorphic foliation minus the Reeb cylinders.  The
    holomorphic foliation is invariant under $\R$-translation on
    $\R \times M$. The moduli space $\M^{\R \times M}/\R$ has two
    components, namely $\M^{\R \times M}_\pm/\R$, consisting of maps
    with $\theta$ coordinate in the intervals $(0,\pi)$ and
    $(-\pi,0)$. Further, by \eqref{eq:holfib} each cylinder projects
    to a point in $(A^2,\F_{ah})$, and in fact the maps
    \begin{equation*}
      \M^{\R \times M}_\pm/\R \to (A^2,\F_{ah})
    \end{equation*}
    are foliated diffeomorphisms.  Finally, we observe that cylinders
    in $\M^{\R \times M}$ project to embedded cylinders in $M$, and
    this projection is invariant under the $\R$-action on
    $\M^{\R \times M}$. The cylinders foliate $M - \mP$ where $\mP$ is
    the union of Morse-Bott submanifolds $\mP_0$ and $\mP_\pi$ from \eqref{eq:mp0}.
    Thus, there is a map
    \begin{equation}
      \label{eq:fibratem}
      \pi:M -\mP \to \M^{\R \times M}/\R,    
    \end{equation}
    whose fibers are cylinders.
  \item {\rm(Sections of the holomorphic fibration)}
    \label{part:sec}
    For the fibration \eqref{eq:fibratem}, we can choose sections
    \begin{equation}
      \label{eq:spm}
      s_\pm:\M^{\R \times M}_\pm/\R \to M,
    \end{equation}
    such that $\theta(s_+) \in (0,\eps)$ and
    $\theta(s_-) \in (-\eps,0)$. Further, they can be chosen so that
    the maps $s_\pm:(A^2,\F_{ah}) \to M$ are homotopic as foliated maps.
  \end{enumerate}
\end{remark}

\begin{corollary}\label{cor:folcyl}
  Given $a \in \R$, the holomorphic foliation of Proposition \ref{prop:folcylinder} has a  leaf
  that is contained in $[a,\infty) \times M$.
\end{corollary}
\begin{proof}
  Any leaf $L$ of the holomorphic foliation that is not a Reeb
  cylinder is contained in $[a_0,\infty) \times M$ for some $a_0 \in \R$, since the leaf is
  a cylinder whose ends asymptote to Reeb orbits in
  $\{\infty\} \times M$. Since the holomorphic foliation of
  $\R \times M$ is $\R$-invariant, translating $L$ by $(a-a_0)$ in the
  $\R$ direction gives a leaf that is required by the corollary.
\end{proof}

\subsection{Holomorphic curves in the filling}\label{subsec:holfill}
% In this section, we describe a family of holomorphic curves on the
% foliated filling of the sphere cotangent bundle of the foliated
% $3$-manifold $X:=\bS^1 \times (A^2,\F_{ah})$.  Here the annulus
% $A^2$ has the almost horizontal foliation $\F_{ah}$.  The
% holomorphic curves are tangent to the foliation, and further they
% form a subfoliation of the filling. Let $M:=\bS(FT^*X)$ be the
% sphere cotangent bundle with contact form $\alpha$ as in Proposition
% \ref{prop:folcylinder}.  Let $W$ be a strong symplectic filling of
% $M$, and $W^\infty$ be the extended filling, i.e. $W^\infty$ is
% obtained by attaching cylindrical ends to $W$. Let $J$ be a generic
% compatible cylindrical leafwise almost complex structure on
% $W^\infty$, which agrees with $J_+$ (from Proposition
% \ref{prop:folcylinder}) on the ends, i.e.
% $J|_{[a,\infty) \times M}=J_+$.

In this section, we describe a family of holomorphic curves on the
foliated filling of a sphere cotangent bundle.
The holomorphic curves
lie on leaves of the foliation, and we will show that they form a
singular subfoliation of the foliated filling.

We first describe the foliated manifold studied in this section.  We
will study a filling of the foliated sphere cotangent bundle of the
$3$-manifold
\[X:=\bS^1 \times (A^2,\F_{ah}), \]
where the annulus $A^2$ has the almost horizontal foliation $\F_{ah}$
as described following \eqref{eq:defX}.  The foliated sphere cotangent
bundle is denoted by
\begin{equation}
  \label{eq:defM}
  M:=(\bS(FT^*X), \alpha).   
\end{equation}
The contact form $\alpha$ is as in Proposition \ref{prop:folcylinder}.
Let $(W,\om)$ be a (weak or strong) symplectic filling of $M$, and let $W^\infty$ be
the extended filling, i.e. $W^\infty$ is obtained by attaching
cylindrical ends to $W$.  Recall that Proposition
\ref{prop:folcylinder} gives a tame cylindrical almost complex structure
$J_0 \in \J_\alpha(\R \times M)$ on the symplectization $\R \times M$.
Let
\begin{equation}
  \label{eq:Jom}
  \J_\om(W^\infty,J_0):=\{J \in \J(W^\infty,\om) | \exists a : J|_{[a,\infty)
    \times M}=J_0, \text{$J$ is $\om$-compatible}\}   
\end{equation}
be the space of compatible almost complex structures on $W^\infty$
that are equal to $J_0$ on the cylindrical ends.

Next, we describe a single holomorphic curve in $W^\infty$.  Starting
from this curve, we will later obtain a family of curves that will
foliate the manifold.  Consider any almost complex structure
$J \in \J_\om(W^\infty,J_0)$. There exists $a>0$ such that
$ J|_{[a,\infty) \times M}=J_0$. By Corollary \ref{cor:folcyl}, 
 the $J_0$-holomorphic foliation of
 $\R \times M$ from Proposition \ref{prop:folcylinder} contains a leaf
 $u_0:\R \times \bS^1 \to \R \times M$ that is contained in
$[a,\infty) \times M$, and thus it is also an embedded $J$-holomorphic
map
\[u_0 : \R \times \bS^1 \to W^\infty. \]
Let $\ol W^\infty$ be the compactification of $W^\infty$ obtained by
adding $\{\infty\} \times M$ to the cylindrical end.  The ends of the
cylinder $u_0$ are asymptotic to Reeb orbits in $\{\infty\} \times M$:
\[\lim_{s \to \infty}u_0(s,t) \subset \mP_\pi \subset \{\infty\}
  \times M, \quad \lim_{s \to -\infty}u_0(s,t) \subset \mP_0 \subset
  \{\infty\} \times M. \]
Thus, $u_0$ represents a relative homology class
\begin{equation}
  \label{eq:betadef}
  \beta \in H_2(\ol W^\infty, \mP_0 \cup \mP_\pi), \quad
  \beta:=(u_0)_*[\ol \R \times \bS^1, \{\pm \infty\} \times \bS^1 ] 
\end{equation}
Define a moduli space of maps
\begin{multline}
  \label{eq:moduli}
  \M(\beta, J):=\left \{u: \R \times \bS^1 \to W^\infty : \text{$u$ is
    $J$-holomorphic},
  \lim_{s \to \infty}u(s,t) \in \mP_0, \right.\\ \left.
  \lim_{s \to -\infty}u(s,t) \in \mP_\pi, u_*(\R \times \bS^1)=\beta \in
  H_2(W^\infty, \mP_0 \cup \mP_\pi)\right \}/\Aut(\R \times \bS^1).
\end{multline}
Here, the group $\Aut(\R \times \bS^1)$ of domain automorphisms is
$\C^\times$.  The following is the main result of this section. 
\begin{proposition}\label{prop:folcylend}{\rm(Holomorphic foliation on
    fillings)}
  Let
  $W$ be a (weak or strong) symplectic foliation that is a filling of the foliated sphere cotangent bundle
  $M$ from \eqref{eq:defM}. Let 
  $J \in \J(W^\infty, J_0)$ be a cylindrical almost complex
  structure that is generic in $W$, and let $\beta$ be the homology
  class in \eqref{eq:betadef}.
  \begin{enumerate}
  \item \label{wa} The moduli space $\M:=\M(\beta,J)$ is a manifold of
    dimension $3$ and possesses a codimension one foliation. Every
    curve in $\M$ is embedded and no two curves in $\M$ intersect.
  \item \label{wb} The moduli space $\M$ has a compactification
    $\ol \M$, and the foliation $\F_\M$ extends to the boundary. The
    boundary $\ol \M \bs \M$ consists of
    \begin{enumerate}
    \item \label{wb1} a compact foliated 2-dimensional manifold, where
      each point represents a building with an empty main level and
      one nontrivial upper level that is a leaf in the holomorphic
      foliation of $\R \times M$.
    \item \label{wb2} A compact 1-dimensional manifold, denoted by
      $\M_{nodal}$, that is transverse to the foliation $\F_\M$, where
      each point represents a nodal curve consisting of two embedded
      index zero curves in $W^\infty$. Each of the nodal curves is
      disjoint from the curves in $\M$.
    \end{enumerate}
  \item \label{wc} The collection of curves in $\M$ and the nodal
    curves in $\M_{nodal}$ form a foliation of $W^\infty$ outside of
    the one-dimensional set of nodal points.  Any nodal
    point is a transversal intersection of two leaves;
    these are the nodes of the nodal curves in $\ol \M \bs \M$.
  \end{enumerate}
\end{proposition}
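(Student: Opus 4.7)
The plan is to run the standard four-dimensional holomorphic-curve package leaf-by-leaf, tie the global picture together via the transverse coordinate $\bt$, and use the model foliation of $\R \times M$ from Proposition \ref{prop:folcylinder} both as initial local data near the positive end and as the target of the SFT compactification. For part (\ref{wa}), I would begin with the Fredholm theory from Section \ref{sec:holfol}. Every curve in $\M$ has the same asymptotics and relative homology class as the model cylinder $u_0$; in the model the cylinders in a single leaf of $\R \times M$ sweep out a two-parameter family, so the expected leafwise Fredholm index is $2$. Automatic transversality in dimension four applied leafwise (\`a la Wendl) then forces $D_u^\F$ to be surjective for every embedded cylinder of this index, and the proposition in Section \ref{sec:holfol} makes $\M$ a smooth three-manifold with codimension-one foliation $\F_\M$ coming from $\bt$. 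Embeddedness and pairwise disjointness follow from positivity of intersections on each four-dimensional leaf: the leafwise self-intersection and the mutual intersection number of two curves in $\M$ can both be computed at the positive end, where Proposition \ref{prop:folcylinder} says the cylinders are embedded and disjoint, and an open-closed argument on $\M$ rules out the appearance of any intersection point along the way.

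For part (\ref{wb}), I would invoke the SFT/Gromov compactness theorem in the foliated cylindrical-end setting. Proposition \ref{prop:energyhomology} gives a uniform Hofer-energy bound from the fixed asymptotic orbits and the relative class $(\om,\alpha)$, while Proposition \ref{prop:bddia} prevents curves from escaping transversally to $\F$. A limit building has total index equal to that of $u_0$ and still connects the Morse-Bott tori $\{\theta=0\}$ and $\{\theta=\pi\}$; index, energy and positivity-of-intersections accounting then reduce the possible limits to three options: a smooth interior cylinder (an interior point of $\M$), a building with empty main level and one nontrivial upper-level cylinder in the model foliation of Remark \ref{rem:sympmod} (giving (\ref{wb1})), or an in-filling nodal degeneration into two embedded index-zero cylinders joined at a single interior node (giving (\ref{wb2})). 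Sphere bubbles and multiple covers are excluded by the primitive homology class of a cylinder together with positivity of intersections with the Morse-Bott Reeb cylinders. The upper-level stratum inherits the two-dimensional foliation of $\M^{\R \times M}_+/\R \cong (A,\F_{ah})$ described in Remark \ref{rem:sympmod}, and the nodal stratum is one-dimensional and transverse to $\F_\M$ because a node absorbs exactly one unit of leafwise index.

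Part (\ref{wc}) is an openness-closedness argument for the subset $U \subset W^\infty$ swept out by $\ol \M$: openness is the implicit function theorem from part (\ref{wa}), closedness is the compactness of part (\ref{wb}), and Proposition \ref{prop:folcylinder} already foliates $[a,\infty) \times M \subset W^\infty$, so $U$ is nonempty and hence $U = W^\infty$ by connectedness. At a nodal point, positivity of intersections applied to the two embedded index-zero components forces local intersection number $+1$, yielding the transverse crossing. The main obstacle in this plan is the compactification step: one has to verify that energy, index and positivity constraints truly cut out exactly the two boundary strata (\ref{wb1}) and (\ref{wb2}), with no additional breakings, bubbles, or multiply-covered limits. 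Once that enumeration is clean, standard gluing, automatic transversality and four-dimensional intersection theory complete the argument.
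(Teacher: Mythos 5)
Your proposal follows essentially the same route as the paper, which reduces everything to the proof of Wendl's Theorem 7 after verifying the two foliated-specific points (trivial holonomy transport of the curves, since they are asymptotic to Reeb orbits with trivial holonomy, and the parity argument -- genericity gives $\ind_\F(u)\geq -1$ off the cylindrical end, while the odd Conley--Zehnder indices of the Morse--Bott orbits force $\ind_\F(u)$ to be even, hence $\geq 0$); you reconstruct the content of Wendl's argument (automatic transversality, positivity of intersections, SFT compactness) rather than citing it, which is fine. One bookkeeping correction: a node absorbs two units of leafwise index, not one -- each index-zero component is rigid within its four-dimensional leaf, so the nodal configurations are isolated leafwise, and that is precisely why $\M_{nodal}$ is one-dimensional and transverse to $\F_\M$; with your count of one unit, $\M_{nodal}$ would be one-dimensional inside each leaf and hence tangent to the foliation, contradicting part (\ref{wb2}).
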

\begin{proof}
  All curves in the moduli space $\M$ have trivial holonomy transport,
  because the curves are asymptotic to a Reeb orbit at the cylindrical
  ends, and the Reeb orbit has trivial holonomy.
  Therefore by Proposition \ref{prop:transv}, 
  the moduli space of curves is a foliated manifold if leafwise unobstructedness holds.
  For a generic $J$, leafwise unobstructedness holds by the following paragraph. 

A neighborhood of the moduli space of maps is unaffected by
non-compactness of leaves. Indeed, each of the curves is asymptotic to
a Reeb orbit in the same leaf, and the projection
\begin{equation}
    \label{eq:piM}
    \pi_M(\on{Im}(u) \cap (W^\infty \bs W))   
  \end{equation}
  in $M$ is compact. Therefore, the local description of the moduli
  space is the same as in the unfoliated case. The arguments in the
  proof of \cite[Proposition 7]{Wendl} imply that the intersection
  number $i(u_1,u_2)$ between curves $u_1, u_2 \in \M$
  vanishes \footnote{If $u_1$ and $u_2$ are asymptotic to the same
    Reeb orbit, the number $i(u_1,u_2)$ also includes a non-negative
    contribution from intersections at infinity in the sense of
    Siefring \cite{Sie}. The arguments for the vanishing of this number are also the same as in the unfoliated case.}, 
  and punctures
  are odd. Since limit Reeb orbits are simply covered, these facts
  imply that the curves in $\M$ are embedded, linearized operators are
  unobstructed. Furthermore, a neighborhood in a leaf of $\M$ consists
  of curves that foliate a neighborhood of a leaf in $W^\infty$.

  By Theorem \ref{thm:folcpt}, the moduli space $\M$ has a
  compactification $\ol \M$. Theorem \ref{thm:folcpt} is indeed
  applicable, because by Proposition \ref{prop:energyhomology} and
  Remark \ref{rem:HEbd}, there is a uniform bound on the Hofer energy
  of curves in $\M$.  A curve $u \in \ol \M \bs \M$ may have multiple
  levels in a leaf of the foliation or it may be a nodal curve.
  First, consider the case that $u$ is a multi-level curve. By the
  arguments in Step 3 in the proof of \cite[Proposition 7]{Wendl}, $u$
  only consists of an upper level, where it is a curve occurring in
  the holomorphic foliation of the symplectization $\R \times M$. If
  $u \in \ol \M \bs \M$ is a nodal curve, each node in $u$ lowers the
  expected dimension of the moduli space by $2$. That is, for the
  moduli space $\M_u$ of nodal curves whose components are homologous
  to $u$, the expected dimension is $\dim(\M) - 2(\# nodes)$.  Moduli
  spaces with negative expected dimension are empty, because curves
  are not multiply covered since they are asymptotic to simple Reeb
  orbits, $J$ is generic in $W$, and a component of a curve can not
  lie in $W^\infty \bs W$ since by Step 2 in the proof of
  \cite[Proposition 7]{Wendl}, the component can not have isolated
  intersections with an element of $\M(\R \times M)$.  Therefore a
  nodal curve $u$ in $\ol \M \bs \M$ has exactly one node. Since the
  filling is minimal, a component of $u$ can not be a cover of an
  exceptional sphere.  The only other possibility is that $u$ consists
  of two disks connected at a node.  By Proposition \ref{prop:transv},
  the moduli space $\M_u$ is a one-dimensional manifold transverse to
  the foliation $\F_W$, and part \eqref{wb} follows. Part \eqref{wc}
  follows from the fact that distinct curves in $\ol \M$ do not have
  intersections.
\end{proof}

\begin{remark}
  The moduli space $\ol \M$ is a manifold with corners. It has four
  (codimension one) boundary components:
  \begin{itemize}
  \item two of them are transverse to the foliation, consisting of
    height two curves as in \eqref{wb1}. We call them the {\em top}
    and {\em bottom} boundary, and they are canonically identified to
    $\R$-equivalence classes in the symplectization, i.e. there are
    standard foliated diffeomorphisms
    \begin{equation}
      \label{eq:topbot}
      (A^2,\F_{ah}) \to \partial_\pm \ol \M \to \M^{\R \times M}_\pm/\R.
    \end{equation}
  \item The other two boundary components are tangent to the foliation
    $\F$ and consist of curves foliating the fillings of $\bT^3$, the
    boundary components of $\bS(FT^*(\bS^1 \times (A^2,\F_{ah})))$.
    This is a consequence of the definition of fillings of manifolds
    with boundary, see Section \ref{sec:sympfill}.  We call these the
    {\em side boundaries} of $\ol \M$, and denote them by
    $\partial_{side,\pm}\ol \M$.
  \end{itemize}
\end{remark}
\begin{proposition}\label{prop:moduliann}
  There is a foliated diffeomorphism
  $(A^2,\F_{ah}) \times [0,1] \to \ol \M$.  that extends the canonical
  diffeomorphism \eqref{eq:topbot} on the top and bottom boundaries.
\end{proposition}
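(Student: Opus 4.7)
The plan is to realize $\ol\M$ as a product by flowing along a transverse vector field, then correct the resulting bottom identification through a foliation-preserving isotopy of $A^2$.

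\emph{Step 1.} Pick a smooth vector field $V$ on $\ol\M$ that is transverse to $\F_\M$, tangent to the side boundaries $\partial_{side,\pm}\ol\M$, and points from $\partial_+\ol\M$ toward $\partial_-\ol\M$. Such a $V$ exists by a partition of unity, since $\F_\M$ is coorientable (the coorientation is induced by the $\R$-direction of the symplectization on the cylindrical ends and extends continuously across $\ol\M$). Rescale $V$ so that its flow $\psi_V^t$ carries $\partial_+\ol\M$ onto $\partial_-\ol\M$ in time one. Set
\[
\Phi_0:(A^2,\F_{ah})\times[0,1]\to \ol\M,\qquad \Phi_0(q,t):=\psi_V^t(\phi_+(q)),
\]
where $\phi_+$ is the canonical identification of $\partial_+\ol\M$ with $(A^2,\F_{ah})$ from \eqref{eq:topbot}. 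Transversality of $V$ makes $\Phi_0$ a foliated diffeomorphism agreeing with $\phi_+$ on the top boundary.

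\emph{Step 2.} Compare $\Phi_0$ with $\phi_-$ on the bottom by setting $\psi:=\phi_-^{-1}\circ\Phi_0(\cdot,1):A^2\to A^2$. The claim is that $\psi$ maps every leaf of $\F_{ah}$ to itself setwise. Indeed, any curve $u\in\M$ lies in a single leaf $L$ of $W^\infty$; the intersection $L_M:=L\cap M$ is a single leaf of $M$, which projects to one leaf of $\F_{ah}$ under $M\to\bS^1\times A^2\to A^2$. Since both identifications $\phi_\pm$ are built from this projection via the asymptotic limits of $u$ at its cylindrical ends (through $\M^{\R\times M}_\pm/\R\cong(A^2,\F_{ah})$ of Remark \ref{rem:sympmod}), the leafwise transport from top to bottom along $\F_\M$ descends to the identity on the leaf space of $\F_{ah}$. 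Hence $\psi$ preserves every leaf of $\F_{ah}$.

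\emph{Step 3.} Isotope $\psi$ to the identity through leaf-preserving diffeomorphisms of $(A^2,\F_{ah})$. Each leaf of $\F_{ah}$ is either a compact circle (the two boundary leaves of $\partial A^2$) or is non-compact and diffeomorphic to $\R$; in either case the restriction of $\psi$ to the leaf is orientation-preservingly isotopic to the identity, and these leafwise isotopies can be assembled into a smooth family $\psi_s:A^2\to A^2$, $s\in[0,1]$, of leaf-preserving diffeomorphisms with $\psi_0=\Id$ and $\psi_1=\psi$. Choosing a cutoff $\eta\in C^\infty([0,1],[0,1])$ with $\eta(0)=0$ and $\eta(1)=1$, define
\[
\Phi(q,t):=\Phi_0\bigl(\psi_{\eta(t)}^{-1}(q),\,t\bigr).
\]
Since every $\psi_s$ is leaf-preserving, $\Phi$ remains a foliated diffeomorphism, and by construction $\Phi(\cdot,0)=\phi_+$ and $\Phi(\cdot,1)=\phi_-$.

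The main technical obstacle is assembling the leafwise isotopies of Step 3 into a globally smooth family across the (non-Hausdorff) leaf space of $\F_{ah}$, in particular near the two compact boundary leaves where the diffeomorphism model jumps from $\R$ to $\bS^1$. This is handled by working in the explicit local model $y+c=\tan(\pi x/2)$ and using an appropriate cutoff in a collar of $\partial A^2$. A routine prior check is that $V$ can be chosen so that its flow restricted to each side boundary already matches $\phi_\pm$ at the corners of $\ol\M$, allowing $\psi_s$ to be taken as the identity on $\partial A^2$ throughout.
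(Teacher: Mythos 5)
There is a genuine gap, and it sits in Step 1. First, as literally stated your vector field cannot exist: the side boundaries $\partial_{side,\pm}\ol \M$ are \emph{tangent} to $\F_\M$ (they are unions of leaves), so a $V$ that is transverse to $\F_\M$ cannot also be tangent to them; moreover a flow transverse to $\F_\M$ moves in the direction transverse to the leaves of $\F_{ah}$, not in the $[0,1]$-direction, so it could never carry $\partial_+\ol\M$ to $\partial_-\ol\M$ through leaves. What you need is $V$ \emph{tangent} to $\F_\M$ and transverse to $\partial_\pm\ol\M$. But even after this correction the argument assumes exactly what has to be proved: for the time-one flow to give a diffeomorphism $\partial_+\ol\M\times[0,1]\to\ol\M$, every forward flow line starting on $\partial_+\ol\M$ must reach $\partial_-\ol\M$ in finite time and the flow must sweep out all of $\ol\M$. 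Since flow lines stay inside leaves of $\F_\M$, this fails if, say, $\F_\M$ has an interior closed leaf disjoint from $\partial_\pm\ol\M$, or a non-compact leaf that is not a strip $\R\times[0,1]$, or a leaf whose flow lines accumulate on the side boundary without hitting $\partial_-\ol\M$. Nothing in your proposal rules these configurations out; ruling them out \emph{is} the content of the proposition. The paper does this by first building a global foliated map $s:(A,\F_{ah})\times[0,1]\to\ol\M$ from the homotopy between the sections $s_\pm$ of Remark \ref{rem:sympmod}, using it to show each non-compact leaf of $\F_\M$ is a strip (a disk-counting topological argument), and then invoking the Reeb-stability-type Theorem 6.1.5 of Candel--Conlon to conclude that $(\ol\M,\F_\M)$ is a foliated product. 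Only after that is a flow/straightening argument legitimate.

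Your Steps 2 and 3 (correcting the bottom identification by a leaf-preserving isotopy of $(A,\F_{ah})$) are essentially the paper's Lemma \ref{lem:ahdiff} and are fine in spirit, though the observation you flag as ``the main technical obstacle'' is not where the real difficulty lies. To repair the proof you would need to insert, before Step 1, an argument establishing the foliated diffeomorphism type of $(\ol\M,\F_\M)$ — for instance the section-homotopy plus Reeb stability argument above — at which point the flow construction becomes a clean way to organize the final identification.
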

\begin{proof}
  We first show that there are maps from the annulus to the top and
  bottom boundaries, that are homotopic in $\ol \M$. There is a
  foliated map
  \begin{equation}
    \label{eq:piw}
    \pi: W^\infty \to \ol \M
  \end{equation}
  that sends a point to the curve in $\ol \M$ on which it lies.  Let
  $\ol W^\infty$ denote the compactification of $W^\infty$ by adding
  $\{\infty\} \times M$ to the cylindrical end.  We recall that the
  top and bottom boundary components $\partial_\pm \ol \M$ can be
  canonically identified to $\R$-equivalence classes of maps
  $\M^{\R \times M}/\R$ in the symplectization, see \eqref{eq:topbot}.
  Therefore, the map \eqref{eq:piw} extends to
  \begin{equation}
    \label{eq:projcpt}
    \pi :\ol W^\infty\bs \mP \to \ol \M,
  \end{equation}
  where $\mP \subset \{\infty \} \times M$ is the union of the
  Morse-Bott submanifolds $\mP_0$, $\mP_\pi$. Here
  $\pi|_{\{\infty\} \times M}$ is defined by composing the fibration
  $M -\mP \to \M^{\R \times M}/\R$ in \eqref{eq:fibratem} with the
  identification $\M^{\R \times M}_\pm/\R \to \partial_\pm \ol \M$ in
  \eqref{eq:topbot}.  The maps
  $s_\pm:\partial_\pm \ol \M \to \ol W^\infty \bs \mP$ in
  \eqref{eq:spm} are sections of the fibration
  \eqref{eq:projcpt}. Since these maps are homotopic in $W^\infty$,
  the homotopy $(A^2,\F_{ah}) \times [0,1] \to \ol W^\infty \bs \mP$ can
  be projected to $\ol \M$ to obtain a foliated map
  \begin{equation}
    \label{eq:sa}
    s:(A^2,\F_{ah}) \times [0,1] \to \ol \M.
  \end{equation}
  By construction, the inverse images of $\partial_+\ol \M$ and
  $\partial_-\ol \M$ are $\{1\} \times A^2$ and $\{0\} \times A^2$,
  and  the restriction $s|(\{0,1\} \times A^2)$ is a diffeomorphism onto the
  top and bottom boundary of $\partial_\pm \ol \M$. We can extend this
  statement to a neighbourhood of the boundary -- there are neighborhoods
  $\Op(\partial_\pm \ol \M)$, $\Op(\{0,1\} \times A^2)$ such that
  \[s^{-1}(\Op(\partial_\pm \ol \M))=\Op(\{0,1\} \times A^2),\]
and 
  \begin{equation}
    \label{eq:topbott}
    s:\Op(\{0,1\} \times A^2) \to \Op(\partial_\pm \ol \M)
  \end{equation}
  is a foliated diffeomorphism.

  In the rest of the proof, we show that $s$ can be homotoped via
  foliated maps to a foliated diffeomorphism.  We perform the
  deformation relative to the neighborhood $\Op(\{0,1\} \times A^2)$
  of the top and bottom boundaries, where by \eqref{eq:topbott}, $s$
  is already a diffeomorphism.  We start with the side boundary.  By
  Lemma \ref{lem:cyldiff}, the side boundary of the moduli space
  $\partial_{side}\ol \M$ is a cylinder, and the restriction of $s$ to
  the side boundary can be homotopically deformed, relative to the
  corners, to a diffeomorphism
  $\partial A^2 \times [0,1] \to \partial_{side}\ol \M$.  Further,
  $s^{-1}(\partial_{side}\ol \M)=\partial A^2 \times [0,1]$, because
  $s$ is a foliated map and so, it is submersive in the leaf
  direction. Consequently an interior leaf can not map to a boundary
  leaf.  The deformation of $s$ on the lateral boundary leaf can be
  extended to the interior so that the restriction
  \begin{equation*}
%    \label{eq:side-nbhd}
    s:s^{-1}(\Op(\partial_{side}\ol \M)) \to \Op(\partial_{side} \ol \M)
  \end{equation*}
  is a foliated diffeomorphism onto its image, and
  $s^{-1}(\Op(\partial_{side}\ol \M))$ is a neighbourhood of
  $[0,1] \times \partial A^2$.

  So far, we have a foliated map $s : (A^2,\F_{ah}) \times [0,1] \to \ol \M$ that is a diffeomorphism in a neighborhood of the boundary. That is, there are neighborhoods of the boundaries such that
  \[s^{-1}(\Op(\partial \ol \M))=\Op(\partial(A^2 \times [0,1])),\]
  and
  \[s : \Op(\partial(A^2 \times [0,1])) \to \Op(\partial \ol \M)   \]
  is a foliated diffeomorphism.
  The complement $A^2 \times [0,1] \bs \Op(\partial(A^2 \times [0,1]))$ is equal to $\bS^1 \times \bD^2$ with the trivial foliation, and 
  \begin{equation}
    \label{eq:sdisk}
    s : \bS^1 \times \bD^2 \to \ol \M \bs \Op(\partial \ol \M)  
  \end{equation}
  is a foliated map that maps the boundary $\bS^1 \times \partial \bD^2$ diffeomorphically to the boundary of $ \ol \M \bs \Op(\partial \ol \M)$. Suppose that for any $t \in \bS^1$, the image $s(\{t\} \times \bD^2)$ is contained in the leaf $\bL_t \subset \ol \M \bs \Op(\partial \ol \M)$.
  Then $\bL_t$ is a surface with a single boundary component $s(\{t\} \times \partial \bD^2)$ that is contractible in $\bL_t$, and therefore, $\bL_t \simeq \bD^2$. Consequently, by \cite[Theorem 6.1.5]{Candel:book},
  $\ol \M \bs \Op(\partial \ol \M)$ is foliated diffeomorphic to $\bS^1 \times \bD^2$ with the trivial foliation, and
  $\ol \M \simeq (A^2,\F_{ah}) \times [0,1]$.
  We may rewrite the map $s$ in \eqref{eq:sdisk} as a foliated map
  \[ s : \bS^1 \times \bD^2 \to \bS^1 \times \bD^2,\]
  where both sides
  have the trivial foliation and $s$ maps the boundary
  $\bS^1 \times \partial \bD^2$ identically to itself. Then, $s$ can be
  deformed to the identity map via a foliated homotopy because the set
  $\Map_\partial(\bD^2,\bD^2)$ of maps from $\bD^2$ to itself which
  are identity on the boundary is convex.
\end{proof}

The following result was used in the proof of Proposition \ref{prop:moduliann}.
\begin{lemma}\label{lem:cyldiff}
  Suppose $X$ is a two-manifold with boundary, and
  $s: [0,1] \times \bS^1\to X $ is a smooth map such that
  $s(\{0,1\} \times \bS^1) \subset \partial X$, and
  $s|_{\Op(\{0,1\} \times \bS^1)}$ is an embedding. Then, $X$ is a
  cylinder and $s$ can be homotoped relative to boundaries to a
  diffeomorphism.
\end{lemma}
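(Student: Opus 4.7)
The plan is to show that $X$ is homeomorphic to a cylinder by a doubling and degree argument, and then to straighten $s$ to a diffeomorphism by elementary surface topology. Write $Y:=[0,1]\times\bS^1$.

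The embedding hypothesis on open collars $U_0:=[0,\eps)\times\bS^1$ and $U_1:=(1-\eps,1]\times\bS^1$ implies that $C_i:=s(\{i\}\times\bS^1)$ are two disjoint embedded circles in $\partial X$, and that $s(U_0\sqcup U_1)$ is a collar of $C_0\sqcup C_1$ in $X$. Replacing $X$ by the connected component containing $s(Y)$ (as is implicit in the applications), we may assume $X$ is connected and compact. A small deformation of $s$ supported in $Y\setminus(U_0\cup U_1)$, obtained by pushing along a vector field on $X$ that is inward-pointing along $\partial X$, then arranges $s^{-1}(\partial X)=\partial Y$ and $\partial X=C_0\sqcup C_1$.

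Using the collar identifications to smooth the gluing, form the smooth doubles $DY\cong\bT^2$ and $DX:=X\cup_{\partial X}X$, and extend $s$ to a smooth map $Ds:DY\to DX$ that is a local diffeomorphism on a neighborhood of the equator $\partial Y\subset DY$. Any regular value of $Ds$ close enough to this equator has exactly one preimage, so $\deg_2(Ds)=1$. The $\Z/2$ transfer for degree-one maps of closed surfaces then yields a split surjection $Ds_*:H_1(DY;\Z/2)\to H_1(DX;\Z/2)$, and hence $\dim H_1(DX;\Z/2)\leq 2$. Among closed surfaces only $\bS^2$, $\R\P^2$, $\bT^2$ and the Klein bottle satisfy this bound; combining with $\chi(DX)=2\chi(X)$ and the fact that $X$ has exactly two boundary circles, the only possibility is $DX\cong\bT^2$, so that $X$ itself must be a cylinder.

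With $X\cong[0,1]\times\bS^1$, the map $s$ is a degree-one self-map of the cylinder whose boundary restriction is a diffeomorphism. Since $\pi_1$ of the cylinder is generated by a boundary circle and $s_*$ therefore agrees (up to sign) with the identity on $\pi_1$, a straight-line homotopy in the universal cover $[0,1]\times\R$, followed by smoothing, deforms $s$ rel $\partial Y$ to a diffeomorphism extending $s|_{\partial Y}$. The main obstacle is the doubling/degree-one step, where the $\Z/2$-homology bound must be combined with the classification of closed surfaces and the boundary constraint; once $X$ is pinned down as a cylinder, the homotopy step is entirely standard.
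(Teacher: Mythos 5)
Your proof is correct in substance but takes a genuinely different, and in one respect more careful, route than the paper. For the first assertion the paper simply observes that $s$ exhibits a free homotopy between two boundary circles of $X$ and concludes that $X$ is a cylinder; as written that implication is incomplete (two boundary circles of a positive-genus surface can be freely homotopic --- remove an annular neighbourhood of a nonseparating curve from a genus-two surface), and what really pins $X$ down in the application is that $s$ has mod-$2$ degree one. Your doubling argument makes exactly this precise: $\deg_2(Ds)=1$ forces $Ds$ to be surjective and, via the transfer, bounds $\dim H_1(DX;\Z/2)$ by $2$, after which the Euler characteristic and the boundary count leave only the annulus. This buys rigor at the price of needing $X$ compact and connected --- hypotheses absent from the statement but present in the application ($X=\partial_{side}\ol\M$ is a compact surface with boundary) and equally indispensable for the paper's own argument; you flag this correctly. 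For the second assertion the two proofs compute the same invariant in different clothes: the paper reads off $k\in\pi_1(\Map_1(\bS^1,\bS^1))\simeq\Z$ and takes the $k$-fold Dehn twist as the diffeomorphic representative, while you produce the homotopy by a straight line in the universal cover.

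Two points should be tightened. First, the straight-line homotopy in $[0,1]\times\R$ is rel $\partial Y$ only if its endpoint is a lift that agrees with $\tilde s$ on the two boundary lines; that endpoint is the $k$-fold Dehn twist $(t,\theta)\mapsto(t,\theta+kt)$, where $k$ is the offset between the lifts of $s$ at the two ends, and it is \emph{not} detected by the action on $\pi_1$ alone. Your phrase ``a diffeomorphism extending $s|_{\partial Y}$'' is consistent with this, but a straight-line homotopy to the identity would not be rel boundary, so the target must be named. Second, the equality $\partial X=C_0\sqcup C_1$ cannot be ``arranged'' by deforming $s$, since no homotopy of $s$ changes $\partial X$; it is instead a consequence of the surjectivity of $Ds$, which follows from $\deg_2(Ds)=1$ once $DX$ is known to be closed and connected. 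Reordering those steps --- compute the degree first, deduce surjectivity, then conclude $\partial X=C_0\sqcup C_1$ --- completes the argument.
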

\begin{proof}
  The manifold $X$ is a cylinder because $s$ gives a homotopy between
  two of its boundary components $s(\{0\} \times \bS^1)$,
  $s(\{1\} \times \bS^1)$.  We fix a diffeomorphism
  $X \to [0,1] \times \bS^1$, so that $s$ is an identity map near the
  ends.  By projecting to $\bS^1$, the map $s$ represents a loop in
  the space of degree $1$ maps from $\bS^1$ to $\bS^1$, called
  $\on{Map}_1(\bS^1,\bS^1)$. The space $\on{Map}_1(\bS^1,\bS^1)$ is
  homotopy equivalent to $\bS^1$, and the loop represented by $s$ is
  classified by an integer $k \in \Z$.  The homotopy class
  corresponding to any $k$ has a representative that is a
  diffeomorphism.
\end{proof}

% \begin{lemma}\label{lem:ahdiff}
%   Suppose $s:[0,1] \times (A^2,\F_{ah}) \to [0,1] \times (A^2,\F_{ah})$ is
%   a foliated map that is a diffeomorphism onto its image in a
%   neighbourhood of $\{0,1\} \times A^2$. Then, $s$ can be homotoped to a
%   foliated diffeomorphism in all of $[0,1] \times A^2$.
% \end{lemma}
% \begin{proof}
%   The map $s$ can be viewed as a loop of foliated maps
%   $(A^2,\F) \to (A^2,\F)$, and we write $s=(s_t)_{t \in \bS^1}$. Each
%   element $s_t:A^2 \to A^2$ in the loop splits into radial and angular
%   parts: $s_t=(r,\theta):A^2 \to I \times \bS^1$, where $I$ is an
%   interval. For non-compact leaves, the value of $\theta$ is
%   determined by $r$ because of the foliatedness condition. So, the
%   homotopy class of $s$ is determined by a loop in the space
%   $\on{Map}_{\partial}(I,I)$ of maps from the interval to the interval
%   that are the identity close to the boundary. This space retracts by
%   deformation to the identity and so we can deform $s$ into a foliated
%   diffeomorphism.
% \end{proof}

\begin{proposition} {\rm(No nodal curves on lateral
    boundary)} \label{prop:nodein} Let $\M_{nodal} \subset \ol \M$
  denote the one-dimensional moduli space of nodal curves as in
  Proposition \ref{prop:folcylend} \eqref{wb2}.  If $W$ is a minimal
  filling, $\M_{nodal}$ is a finite union of circles, which do not
  intersect the lateral boundaries $\M$ ($\partial_{side,\pm}\ol
  \M$). Each point in this 1-dimensional manifold represents a nodal
  curve which is a union of two disks.
\end{proposition}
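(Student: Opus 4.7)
The plan is to handle the three assertions of the proposition in turn, using the moduli-space structure established in Proposition \ref{prop:folcylend}.

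First, I would show that each nodal curve is a union of two disks. By Proposition \ref{prop:folcylend}\eqref{wb2}, a point of $\M_{nodal}$ corresponds to a nodal $J$-curve with exactly two embedded index-zero components joined at a single node, whose domain is a Gromov limit of cylinders and so has arithmetic genus zero. The standard formula $g_{arith} = g_1 + g_2 + n - c + 1$ with $n=1$ and $c=2$ forces $g_1 = g_2 = 0$. The two punctures of the cylinders, each asymptotic to a Reeb orbit on one of the Morse--Bott tori $\{\theta = 0\}$, $\{\theta = \pi\} \subset M$, must be distributed between the two components. If both punctures lay on the same component, the other would be a closed embedded $J$-holomorphic sphere $S$ in a leaf $L$; its leafwise Fredholm index is $-2 + 2\langle c_1(TL),[S]\rangle$, so index zero forces $\langle c_1,[S]\rangle = 1$, and leafwise adjunction then forces $[S]\cdot[S] = -1$. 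The resulting embedded symplectic $(-1)$-sphere would contradict the minimality of $W$. So each component carries exactly one puncture and is a disk.

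Next, I would rule out nodal curves on the lateral boundary. The components $\partial_{side,\pm}\ol \M$ parametrize curves in the two side leaves of $W$, each of which is a strong symplectic filling of the standard contact $\bT^3$. Minimality of $W$ passes to each side leaf, so by Wendl's classification \cite{Wendl} every such side leaf is symplectically deformation equivalent to $\bD^2 \times \bT^2$ and, for a suitable choice of $J$ matching our cylindrical structure near the boundary, is foliated by embedded holomorphic cylinders with no nodal degenerations. Consequently $\M_{nodal}$ cannot approach $\partial_{side,\pm}\ol \M$.

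Finally, to deduce that $\M_{nodal}$ is a finite union of circles, I would control the top and bottom boundary components $\partial_\pm \ol \M$. These consist of height-two buildings whose only nontrivial level lies in the symplectization $\R \times M$, where by Proposition \ref{prop:folcylinder} the holomorphic foliation is by embedded cylinders; no nodal curves occur, so $\M_{nodal}$ is disjoint from $\partial_\pm \ol \M$ as well. Then $\M_{nodal}$ is a closed subset of the interior of $\ol \M$ and, by Proposition \ref{prop:folcylend}\eqref{wb2}, a $1$-dimensional manifold, hence a finite disjoint union of circles. The main obstacle is the second step: transferring Wendl's classification of $\bT^3$ fillings to the foliated setting requires choosing the leafwise almost complex structure $J$ compatibly with his model near the lateral boundary, so that the rigidity of the $\bT^3$ filling truly prevents any nodal cylinder-degeneration from migrating toward $\partial_{side,\pm}\ol \M$.
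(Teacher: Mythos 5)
Your first and third steps are essentially the paper's: the two-disk structure follows from minimality ruling out sphere components (the paper states this more tersely, but your index/adjunction computation is the intended justification), and the finiteness/compactness of $\M_{nodal}$ away from the top and bottom boundaries is already packaged into Proposition \ref{prop:folcylend}\eqref{wb2}. The problem is your second step, which is the actual content of the proposition, and which you yourself flag as unresolved. Invoking Wendl's classification of fillings of $\bT^3$ does not do the job: that classification is a statement about the symplectic manifold up to deformation equivalence, whereas what must be shown is that the moduli space of curves for the \emph{already fixed} generic leafwise $J$ on $W^\infty$ has no nodal elements over the side leaves. You cannot re-choose $J$ "compatibly with his model" at this stage -- $\ol\M$ and $\M_{nodal}$ are defined relative to the given $J$, and a deformation equivalence of the side leaf to $\bD^2\times\bT^2$ carries no information about nodal degenerations of $J$-curves for that $J$. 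As written, the central claim is assumed rather than proved.

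The paper closes this gap with a self-contained monodromy argument (which is also how Wendl handles the analogous point for $\bT^3$, so the right citation is to his argument, not to the classification as a black box). The restriction of the Lefschetz fibration $W^\infty \to \ol\M$ to a side boundary leaf $\partial_{side,\pm}\ol\M \simeq \R\times\bS^1$ has cylinder fibers and is trivialized near both ends by the standard structure on the cylindrical ends. Hence the monodromy around a loop enclosing all points of $\M_{nodal}\cap\partial_{side,\pm}\ol\M$ is trivial. On the other hand, that monodromy is a product of positive Dehn twists, one for each enclosed singular value; the vanishing cycles are non-contractible (a contractible vanishing cycle would produce a sphere component, excluded by minimality), so each twist is the positive generator of the mapping class group of the cylinder, which is $\Z$. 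A nonempty product of positive generators cannot be the identity, so there are no singular values on the lateral boundary. You should replace your appeal to the $\bT^3$ classification with this computation.
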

\begin{proof}
  Index zero spheres are ruled out because the filling is minimal, and the only other
  possible nodal curve is a union of disks. Neither of these disks are
  contained in the end $[R,\infty) \times M$, because the only curves
  of $\ol \M$ that are contained in the end $[R,\infty) \times M$ are
  those that occur in the symplectization. Further, the curves are
  somewhere injective because of their asymptotic behaviour.  Since
  $J$ is generic away from the ends, both components of the nodal
  curve have index $\geq -1$. By the reasoning in Proposition
  \ref{prop:folcylend}, the curves have even index, and therefore the
  index is zero. So, the moduli space of nodal curves is a
  one-dimensional manifold transverse to the foliation on $\ol \M$.

  Next, we claim that $\M_{nodal}$ does not intersect the lateral
  boundaries. The argument is as in Wendl \cite{Wendl}. Indeed,
  $W^\infty \to \ol \M$ is a Lefschetz fibration which is singular
  along $\M_{nodal}$. The monodromy map on a loop in
  $\partial_{side,\pm}\ol \M$ enclosing all the points of
  $\M_{nodal} \cap \partial_{side,\pm}\ol \M$ is trivial, because the
  bundle is trivial on the ends of the moduli space
  $\partial_{side,\pm} \ol \M \simeq \R \times \bS^1$. The mapping
  class group of the cylinder is $\Z$, and is generated by a single
  element. So the product of positive Dehn twists cannot be identity.

\end{proof}

Finally, we show that by adjusting the almost complex structure on
$W^\infty$, we obtain a holomorphic subfoliation that is standard in
the non-compact ends.  The standard filling of the sphere cotangent
bundle $M:=\bS (FT^* X)$ is the disk cotangent bundle
$W_{std}:=\bD (FT^* X)$. By attaching cylindrical ends, we obtain
the cotangent bundle $W^\infty_{std}=FT^*X$. There is a foliated
cylindrical almost complex structure on $J_0$ on $FT^*X$ for
which the fibers of the projection
\begin{equation}\label{eq:stdfol}
  FT^*X \to \R \times (A^2,\F_{ah}), \quad (q_1,q_2,p_1,p_2,\tau) \mapsto (p_2,(q_2,\tau))
\end{equation}
are holomorphic. Indeed, on the cylindrical ends
$[0,\infty) \times M$, $J_0$ can be defined as in
\eqref{eq:cylJ}. We refer to the holomorphic foliation in
\eqref{eq:stdfol} as the {\em standard holomorphic foliation} on the
cylindrical ends $[0,\infty) \times M$.

\begin{proposition}
  \label{prop:endstd} {\rm(Holomorphic foliation is standard in ends)}
  Suppose $W$ is a filling of $M:=\bS (FT^* X)$ that is a weak or strong symplectic foliation.  There is a
  cylindrical almost complex structure $\hat J$ on $W^\infty$ that is
  \begin{enumerate}
  \item equal to $J_0$ on $[R,\infty) \times M$ for a large $R$.
  \item Suppose $\ol \M_{\hat J}$ is the moduli space of
    $\hat J$-holomorphic curves given by Proposition
    \ref{prop:folcylend}. The holomorphic foliation induced by the
    curves in $\ol \M_{\hat J}$ on $[R,\infty) \times M$ is the
    same as the standard holomorphic foliation on
    $[R,\infty) \times M$.
  \end{enumerate}
\end{proposition}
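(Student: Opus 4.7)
The plan is to choose $\hat J$ in two stages. First, since the space of leafwise-compatible cylindrical almost complex structures on $W^\infty$ that agree with $J_0$ on $[R_0,\infty) \times M_{ah}$ is non-empty (extend $J_0$ arbitrarily into the compact part) and path-connected, I would pick $\hat J$ inside this space and generic enough that Proposition \ref{prop:folcylend} applies to produce the moduli space $\ol \M_{\hat J}$ and the foliation of $W^\infty$ (outside the $1$-dimensional nodal set) by embedded cylinders. This fixes the value of $R_0$ from condition (a). The remaining work is to enlarge $R_0$ to some $R \geq R_0$ so that every curve in $\ol \M_{\hat J}$ meets $[R,\infty)\times M_{ah}$ only along its two asymptotic ends, and to identify those ends with leaves of \eqref{eq:stdfol}.

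Each $u \in \ol \M_{\hat J}$ is a finite-energy cylinder tangent to $\F$ whose two punctures are asymptotic to Reeb orbits in the Morse-Bott tori $\{\theta=0\}$, $\{\theta=\pi\}$. By Hofer-Wysocki-Zehnder asymptotic analysis (with Morse-Bott adjustments), each end of $u$ approaches its limit Reeb orbit exponentially fast in the symplectization coordinate. Combined with Gromov compactness for $\ol \M_{\hat J}$ (which gives uniform control on the moduli space), there exists a uniform $R \geq R_0$ such that $u^{-1}([R,\infty)\times M_{ah})$ consists of two disjoint punctured half-cylinders mapped into the region where $\hat J=J_0$. Thus each end of $u$ is a $J_0$-holomorphic half-cylinder lying in a single leaf of $\F$.

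Next, I would run a leafwise uniqueness argument. Each leaf of $\F$ in the end is (up to the $\R$-direction of the symplectization) the unit cotangent bundle of a cylinder $\bS^1 \times \R$ (or $\bS^1 \times \bS^1$ for boundary leaves), and the leafwise $J_0$ is the pullback of the standard cylindrical structure on $\R \times \bT^3$. Wendl's classification in \cite{Wendl} shows that on each such leaf the only $J_0$-holomorphic cylinders asymptotic to the specified Morse-Bott Reeb orbits are the leaves of the projection \eqref{eq:stdfol}. Consequently each asymptotic half-cylinder of $u$ agrees with a unique leaf of the standard foliation on an open set; by unique continuation the two half-cylinders extend to the same standard leaf, since the asymptotic Reeb orbits at the two ends of $u$ are precisely those attained by a single standard leaf.

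Once each $u \in \ol \M_{\hat J}$ is identified with a standard leaf on $[R,\infty)\times M_{ah}$, the bijection between $\ol \M_{\hat J}$ and $\R \times (A,\F_{ah})$ (coming from the identifications $\partial_+\ol\M \simeq \M^{\R\times M}_+/\R \simeq (A,\F_{ah})$ of Remark \ref{rem:sympmod} together with the $\R$-shift accounting for how deep into the end the curve enters) implies that the induced foliation on $[R,\infty)\times M_{ah}$ coincides with the standard foliation, as required. I expect the main obstacle to be the leafwise uniqueness step: establishing that the restriction of a curve in $\ol \M_{\hat J}$ to the end must coincide with a leaf of \eqref{eq:stdfol} rather than merely intersect each such leaf positively, which requires pairing the asymptotic matching with analytic uniqueness in a single leaf. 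If a direct classification is too restrictive, an alternative is to argue by openness and closedness: the subset of $\ol \M_{\hat J}$ whose curves match a standard leaf on the end is open (by the implicit function theorem near $\partial_+\ol\M$), closed (by Gromov compactness and positivity of intersection applied to $u$ versus a nearby standard leaf in the same $\F$-leaf), and nonempty, hence all of $\ol \M_{\hat J}$.
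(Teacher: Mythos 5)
There is a genuine gap at the step you yourself flag as the main obstacle: the ``leafwise uniqueness argument.'' You claim that each end of a curve $u \in \ol\M_{\hat J}$, being a $J_0$-holomorphic half-cylinder asymptotic to a Morse--Bott Reeb orbit, must \emph{coincide} with a leaf of the standard foliation \eqref{eq:stdfol} on $[R,\infty)\times M_{ah}$, citing Wendl's classification. But Wendl's uniqueness statements (and the $\R$-invariance and intersection-theoretic arguments behind them) classify \emph{entire} finite-energy cylinders living in the symplectization; they say nothing about the restriction to $\{a\ge R\}$ of a curve that continues into the compact part $W$. Such a restriction is a proper half-cylinder with free ``boundary data'' along $\{a=R\}$, and half-cylinders asymptotic to a fixed Reeb orbit form an infinite-dimensional family. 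Exponential convergence (Hofer--Wysocki--Zehnder asymptotics) gives you that the end is $C^\infty$-close to the Reeb cylinder $\R\times\gamma$, not that it equals any fiber of \eqref{eq:stdfol}. Your fallback via positivity of intersection fails for the same reason: a curve of $\ol\M_{\hat J}$ can perfectly well meet nearby standard fibers in isolated positive intersection points without being contained in one, so the ``closed'' part of your open-closed argument does not go through. For a generic $J$ extending $J_0$, the induced foliation on the end is genuinely \emph{not} standard; it is only homotopic to the standard one.

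This is exactly why the paper's proof does something your proposal omits entirely: it \emph{modifies} the almost complex structure in an intermediate collar. One takes $R$ large enough that no nodes occur in $[\tfrac R2,\infty)\times M_{ah}$, observes that the $J$-induced foliation there is homotopic to \eqref{eq:stdfol}, and then deforms the foliation inside $[\tfrac R2,R]\times M_{ah}$ (through symplectic surfaces, which is possible for $R$ large) so that it agrees with the $J$-induced one near $\{\tfrac R2\}\times M_{ah}$ and with the standard one near $\{R\}\times M_{ah}$. The structure $\hat J$ is then \emph{defined} on that collar so that the interpolated surfaces are holomorphic, and set equal to $J$ elsewhere (hence equal to $J_0$ on $[R,\infty)\times M_{ah}$). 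The content of the proposition is precisely this adjustment of $J$ in a compact region; under your reading, where $\hat J$ is just any generic extension of $J_0$, the statement would be asserting an automatic rigidity of the ends that does not hold.
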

\begin{proof}
  We start with an arbitrary generic cylindrical almost complex
  structure $J$ that is equal to $J_0$ on $[0,\infty) \times
  M$. Let $\ol \M_J$ be the moduli space of $J$-holomorphic
  curves produced by Proposition \ref{prop:folcylend}. Take $R>0$ such
  that there are no nodes in $[\frac R 2,\infty) \times M$.  The
  holomorphic foliation in $[\frac R 2,\infty) \times M$ is
  homotopic to the standard foliation \eqref{eq:stdfol} in that
  region.  In the region $[\frac R 2, R] \times M$, homotopically
  deform the holomorphic foliation induced by $\ol \M_J$, leaving it
  unchanged near $\{\frac R 2\} \times M$, and making it agree
  with the standard holomorphic foliation near $\{R\} \times
  M$. By taking $R$ large enough, the deformation can be carried
  out such that the leaves are symplectic.  Define an almost complex
  structure $\hat J$ on $[\frac R 2, R] \times M$ such that the
  leaves of the new foliation are $\hat J$-holomorphic.  Extend
  $\hat J$ to all of $W^\infty$ by defining $\hat J=J$ outside
  $[\frac R 2, R] \times M$. The holomorphic foliation induced by
  $\ol \M_{\hat J}$ agrees with the standard one on
  $[R,\infty) \times M$.
\end{proof}

The following result is used in Section \ref{subsec:blowdown} to show that
 a family of exceptional spheres is necessarily embedded.

\begin{lemma}\label{lem:exceptional-embed}
  {\rm(Exceptional spheres do not intersect)} Let $W$ be a strong symplectic foliation that is a 
  filling of a compact contact foliation $M$, and let $W^\infty$ be
  its extension.  Let $J$ be a compatible almost complex structure on
  $W^\infty$ that is cylindrical in the ends and generic in $W$. Let
  $u_1, u_2 : \P^1 \to W^\infty$ be embedded $J$-holomorphic spheres
  with self-intersection $(-1)$ and distinct images. Then, $u_1$ and
  $u_2$ do not intersect.
\end{lemma}
\begin{proof}
  Assume for the sake of contradiction, that $u_1$ and $u_2$ intersect
  at $p \geq 1$ points, and they lie in a leaf $L \subset W^\infty$ of the
  foliation.  
  By genericity of $J$, we may assume that the
  intersection is transverse. Gluing at one of the intersection
  points, we obtain a holomorphic immersed sphere $u_0: \P^1 \to L$ (which
  is embedded if $p=1$) with $c_1(u) \geq 2$. By automatic
  transversality in four-manifolds, the linearized leafwise
  Cauchy-Riemann operator $D_{u_0}^\F$ is surjective, and the moduli space
  $\M_L$ of maps in $L$ containing $u_0$ is smooth.

  First, consider the case that the intersection number $i(u_0,u_0)$
  is positive.  Then, any curve $u_0$ in the moduli space $\M_L$
  intersects a fixed curve $u_0' \in \M_L$.  By Proposition \ref{prop:bddia}
  (which is applicable by the compactness of $W$), there is a compact
  set $K$ in the leaf $L$ that contains the images of the maps in
  $\M_L$.  Therefore, there is a compactification $\ol \M_L$, and the
  images of the maps in the compactified moduli space $\ol \M_L$ cover
  the leaf $L$. The arguments in McDuff \cite{mcduff:immersed} imply
  that the leaf $L$ is $\P^2$ with a set of point blow-ups, and in
  particular, is simply connected.  However, if a foliated manifold
  $W^\infty$ has a compact simply connected leaf, then $W^\infty$ is
  an $S^1$-fibration with fibers diffeomorphic to $L$. This
  contradicts the fact that $W^\infty$ is non-compact, and therefore,
  this case does not occur.

  It remains to consider the case that $i(u_0,u_0)=0$ and therefore,
  $c_1(u_0)=2$, $u_0$ is embedded and has a trivial normal bundle.
  Consider the moduli space $\M$ of holomorphic spheres
  $u: \P^1 \to W^\infty$ tangent to $\F$ that are of the same homology
  class as $[u_0]$. Since $u_0$ has trivial holonomy, by a version of Proposition \ref{prop:transv}
  for compact curves, the moduli space $\M$ is a foliated manifold. 
  A map $u \in \M$ can not be contained be
  contained in the end $W^\infty\bs W$ because the symplectic form is
  exact in the end. Consequently, by a version of the monotonicity
  theorem (such as Proposition \ref{prop:bddia}), there is a compact
  set $K \subset W^\infty$ that contains the images of all maps in
  $\M$. Therefore, there is a compactification $\ol \M$ of $\M$, and
  the curves in $\ol \M$ cover $W^\infty$. We have arrived at a
  contradiction because $\ol \M$ is compact and $W^\infty$ is
  non-compact, leading to the proof of the Lemma.
  
\end{proof}

\section{Model fillings}
Suppose $W_{ah}$ is a filling of the foliated unit cotangent bundle of
$\bS^1 \times (A^2,\F_{ah})$. The results of Section \ref{sec:holfol}
can be summarized by saying that the extended filling
$W^{\infty}_{ah}$ is a foliated Lefschetz fibration over
$\R \times (A^2,\F_{ah})$ that has a standard structure on non-compact
ends.  The regular fibers are cylinders $\R \times \bS^1$.  In this
section, we prove that such Lefschetz fibrations can be classified up
to symplectic deformation equivalence using combinatorial data.

A part of the combinatorial data comes from the boundary leaves of
$W^\infty$. The boundary of a Lefschetz fibration on
$\R \times (A^2,\F_{ah})$ has two boundary leaves, each of which is a
Lefschetz fibration over a cylinder.  These are extended fillings of
$\bT^3$, and were studied by Wendl \cite{Wendl}.

\subsection{Model filling of $\bT^3$}
% The almost horizontally foliated manifold
% $X_{ah}:=(A^2 \times S^1,\F_{ah})$ has two boundary components, each
% of which is $\bT^2$. In the classification of fillings of sphere
% cotangent bundle $\bS FT^*X_{ah}$, an important ingredient is the
% fillings of the boundaries $\bT^3 \simeq \bS T^* \bT^2$.
The manifold $\bT^3$ has a standard contact structure, by viewing it
as the unit cotangent bundle of $\bT^2$.  The standard filling of
$\bT^3$ is the disk cotangent bundle $\bD( T^*\bT^2)$. By a Luttinger
surgery of the zero section $\bT^2 \subset T^*\bT^2$, one obtains a
$\Z^2$-family of fillings. Wendl \cite{Wendl} proved that any filling
of $\bT^3$ is symplectic deformation equivalent to one of the elements
in this family. The $\Z^2$-parameter associated to a filling is called
the {\em Luttinger constant} of the filling.  In this section, we
first describe the Luttinger surgery for a choice of parameter
$(k,k_b) \in \Z^2$. We then prove a result about which Luttinger
constants occur on the boundary of the almost horizontal filling.

% The topological type of the filling in the almost horizontal case is
% completely determined by the topological type of the fillings of the
% boundary leaves $\bT^3$.  The standard filling of $\bT^3$ is the
% disk cotangent bundle of $\bT^2$. However, by performing Luttinger
% surgery along the zero section of the cotangent bundle, we can
% produce a family of model fillings.  The parameter required for the
% Luttinger surgery is a pair $(k_1,k_2) \in \Z^2$.  Any filling of
% $\bT^3$ is symplectic deformation equivalent (relative to the
% non-compact ends) to one of the model filllings, and thus the pair
% $(k_1,k_2)$ classifies the fillings of $\bT^3$.

The description of the Luttinger surgery is on the lines of Wendl
\cite{Wendl}, who in turn, followed Auroux-Donaldson-Katzarkov
\cite{ADK}.  To describe the surgery, we choose an additional
parameter $c \in \R_{>0}$, but later it will be shown that various
choices lead to symplectomorphic manifolds. We denote
$\sig=(c,k,k_b) \in \R_{>0} \times \Z^2$.  The surgery is performed on
the cotangent bundle
$$T^*\bT^2 = \{(q_1,q_2,p_1,p_2)\in (\bS^1)^2 \times (\R^2) \}, \quad \om_0=\sum_{i=1,2} dq_i \wedge dp_i$$
along the zero section $\{p_1=p_2=0\}$. For any $r>0$, let
$K_r:=\{(q,p) \in T^*\bT^2:|p_1|,|p_2|<r\}$. There is a
symplectomorphism $\psi_\sig:K_{2c} \bs K_c \to K_{2c} \bs K_c$
defined as
\begin{equation*}
  \psi_\sig(q_1,q_2,p_1,p_2):=  
  (q_1+k\chi(p_2)\beta(\frac {p_1} c), q_2+k_b\chi(p_1)\beta(\frac {p_2} c), p_1, p_2)),
\end{equation*}
where $\chi:\R \to \{0,1\}$ is $0$ on $\R_{<0}$ and $1$ on
$\R_{\geq 0}$, and $\beta:\R \to [0,1]$ is a smooth cut-off function
that is $0$ on $(-\infty,-1]$, $1$ on $[1,\infty)$ and satisfies
$\int_{-1}^1t\beta'(t) dt=0$. The output of the Luttinger surgery is
the symplectic manifold
\begin{equation*}
  (\Lutt_\sig(T^*\bT^2),\om_\sig):=(T^*\bT^2 \bs K_c,\om_0) \cup_{\psi_\sig} (K_{2c},\om_0).
\end{equation*}

\begin{remark}
  \label{rem:noneqt3}
  For a non-trivial Luttinger parameter $\sig$, the spaces $T^*\bT^2$
  and $\Lutt_\sig (T^*\bT^2)$ are symplectomorphic. The
  symplectomorphism is given by global coordinates $(Q_1,Q_2,P_1,P_2)$
  on $\Lutt_\sig(T^*\bT^2)$ defined as follows: $(Q,P)$ is the
  standard coordinate on $K_{2c}$, and thus on the overlap, we have
  $(q,p)=\psi_\sig(Q,P)$. The coordinates $(Q,P)$ extend to all of
  $T^*\bT^2\bs K_{2c}$.  However, the restriction of the
  symplectomorphism to the ends $\R_{\geq 0} \times \bT^3$. is not
  isotopic to identity.  So $T^*\bT^2$ and $\Lutt_\sig(T^*\bT^2)$ are
  not symplectic deformation equivalent relative to ends.
\end{remark}

\begin{proposition} {\rm (Classification of $\bT^3$-fillings
    \cite[Proposition 5.6]{Wendl})} \label{prop:lutt} Suppose
  $(W^\infty,\om)$ is a filling of $M=(\bT^3,\alpha_0)$ with infinite
  ends attached, so there is a symplectomorphism to the non-compact
  ends $F:[R,\infty) \times M \to W^\infty$ for some $R>0$.  There is
  a Luttinger surgery parameter $\sig$ such that the map $F$ extends
  to a symplectic deformation equivalence $F:W_\sig \to W^\infty$,
  i.e. $F$ is a diffeomorphism such that $(1-t)\om_\sig + tF^*\om$ is
  symplectic on $W_\sig$ for all $t \in [0,1]$.
\end{proposition}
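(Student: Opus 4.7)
The plan is to adapt the holomorphic cylinder foliation strategy of Section \ref{sec:holfol} to the (unfoliated) setting of a single filling of $\bT^3$, and then read off the Luttinger parameter from the monodromy of the resulting cylinder bundle. First I would fix a cylindrical almost complex structure $J$ on $W^\infty$ that, on $[R,\infty) \times \bT^3$, agrees with the standard translation-invariant structure whose symplectization is swept out by finite energy holomorphic cylinders between closed Reeb orbits in the two Morse-Bott tori of $\bT^3 = \bS(T^*\bT^2)$; this is the unfoliated analogue of Proposition \ref{prop:endstd}. At least one such cylinder $u_0$ lies entirely inside the cylindrical end, so the connected component $\M$ of the moduli space of finite energy $J$-holomorphic cylinders containing $u_0$, modulo reparametrization, is nonempty.

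Next I would repeat the index, automatic transversality, and SFT Gromov compactness analysis of Propositions \ref{prop:folcylend} and \ref{prop:nodein} in this simpler non-foliated setting. The Reeb orbits on the Morse-Bott tori have odd Conley-Zehnder index, so by the index formula of Wendl the cylinders in $\ol\M$ have even Fredholm index and hence index $\geq 0$. One concludes that $\M$ is a smooth $2$-manifold, every curve in $\M$ is embedded, and the curves, together with at most a finite set of index-zero nodal cylinders, foliate $W^\infty$. Minimality excludes exceptional-sphere components, so any nodal curve is a pair of index-zero disks, and the same mapping-class-group argument as in Proposition \ref{prop:nodein} rules them out entirely: a nontrivial product of positive Dehn twists in the cylinder mapping class group $\Z$ cannot equal the identity, yet the monodromy around the boundary of $\M$ must be trivial since the fibration is standard on the ends. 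Consequently $\M$ is a compact surface without singular fibers, and its topology is forced to be $\bT^2$.

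This produces a symplectic cylinder bundle $\pi: W^\infty \to \bT^2$ whose restriction to the ends is the standard projection from $T^*\bT^2 \bs K_c$. Such a bundle, relative to its ends, is classified by the monodromy around two generators of $\pi_1(\bT^2)$, each an element of $\pi_0 \Symp_c(\R \times \bS^1) \simeq \Z$, giving a pair $(k, k_b) \in \Z^2$. These two integers are exactly the parameters entering the surgery map $\psi_\sig$, and for any sufficiently small $c > 0$ the surgery annulus $K_{2c}\bs K_c$ sits inside a neighbourhood of the zero section that, under the cylinder foliation, matches the corresponding region of $W^\infty$ on the nose. This yields a diffeomorphism $F: W_\sig \to W^\infty$ extending the given symplectomorphism on the ends.

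The main obstacle is upgrading $F$ to a symplectic deformation equivalence. The two forms $\om_\sig$ and $F^*\om$ already agree on the cylindrical ends, and by construction they pair identically with the fiber and base classes of $\pi$ and with the $\bT^2$-zero section (the periods are fixed by the contact form $\alpha_0$ on the boundary and by the Luttinger parameter), so their difference is exact with compactly supported primitive. A Moser-type interpolation applied transversely to the cylinder fibration then shows that $(1-t)\om_\sig + tF^*\om$ remains non-degenerate for all $t \in [0,1]$, because both endpoints are positive on the base $\bT^2$ and on the cylinder fibers of $\pi$ and the correction one needs to add is small. This completes the deformation equivalence.
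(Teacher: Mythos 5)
The paper does not prove this proposition; it is quoted from Wendl \cite[Proposition 5.6]{Wendl}, so your reconstruction has to be judged on its own merits, and it contains a genuine error at the central step. The moduli space $\M$ of holomorphic cylinders is \emph{not} a closed surface, and certainly not $\bT^2$. Cylinders near $u_0$ escape into the positive end and break, so the SFT compactification $\ol\M$ is a compact annulus whose two boundary circles consist of height-two buildings with a nontrivial symplectization level --- exactly the analogue of $\partial_\pm\ol\M$ in Proposition \ref{prop:folcylend}\eqref{wb1}. Equivalently, $W^\infty$ fibers over the open cylinder $\R\times\bS^1$ with coordinates $(p_2,q_2)$, as in Remark \ref{rem:lutt}, not over $\bT^2$; a cylinder bundle over a closed surface could not have a single end modeled on $[R,\infty)\times\bT^3$, since in $T^*\bT^2$ both the base direction ($p_2\to\pm\infty$) and the fiber direction ($p_1\to\pm\infty$) contribute to the one connected ideal boundary. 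Because $\pi_1(\R\times\bS^1)\simeq\Z$, the clutching/monodromy construction you invoke produces only \emph{one} integer, namely $k$. The second Luttinger parameter $k_b$ is not a monodromy of the fibration at all: it records the offset in the $q_2$-coordinate between the two asymptotic Reeb orbits of each cylinder, i.e.\ how the two boundary circles of $\ol\M$ are identified with the two Morse--Bott tori $\{\theta=0\}$ and $\{\theta=\pi\}$ (Remark \ref{rem:lutt}(a)). Your argument never produces $k_b$, so the classification by $\sig$ is not established.

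Two further points. First, your monodromy argument ruling out nodal fibers is essentially right (it is the argument of Proposition \ref{prop:nodein}), but note it presupposes minimality, which the statement leaves implicit. Second, the final step is also flawed: $\om_\sig-F^*\om$ need not admit a compactly supported primitive, because $H^2_c(T^*\bT^2)\simeq\R$ is nontrivial --- the residual class is precisely the invariant $\kappa$ isolated in Proposition \ref{prop:sympt3}, and your claimed period computations do not detect it. Deformation equivalence does not require exactness, but then the non-degeneracy of $(1-t)\om_\sig+tF^*\om$ cannot be obtained by a Moser argument plus ``the correction is small''; one needs something like the inflation argument of Proposition \ref{prop:unique} (add a large compactly supported multiple of a base area form, interpolate, then remove it), which produces a path of symplectic forms rather than the straight line unless one argues further.
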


\begin{remark}\label{rem:lutt}
  {\rm(Interpretations of Luttinger surgery parameters)} There is a
  cylindrical almost complex structure $J_0$ on $T^*\bT^2$ such that
  the fibers of the map
  \begin{equation*}
    \pi:T^*\bT^2 \to \bS^1 \times \R, \quad (q_1,q_2,p_1,p_2) \mapsto (q_2,p_2)
  \end{equation*}
  are holomorphic cylinders.  For any Luttinger parameter $\sig$,
  there is an $\om_\sig$-compatible almost complex structure on
  $W_\sig$ that is equal to $J_0$ on the ends
  $[R,\infty) \times \bT^3$, such that the fibers of the map
  \begin{equation}
    \label{eq:cylfib}
    \pi: (Q_1,Q_2,P_1,P_2) \mapsto (Q_2,P_2)
  \end{equation}
  are $J_\sig$-holomorphic.  The Luttinger parameters $(k,k_b)$ have
  the following interpretations.
  \begin{enumerate}
  \item {\rm(As offsets of ends of cylinders)} The cylinder
    $u^{(\rho,\eta)}:=\{(P_2,Q_2)=(\rho,\eta)\}$ has one end
    asymptotic to $\{\theta=\pi, q_2=\eta\}$ and the other end
    asymptotic to $\{\theta=0, q_2=\eta+k_b\beta(\rho/c)\}$.
  \item {\rm(As Dehn twists)} Let us restrict ourselves to the case
    when $k_b=0$. We will see in Proposition \ref{prop:luttah} that
    $k_b$ indeed vanishes in the almost horizontal case. If $k_b=0$,
    then $(Q_2,P_2)=(q_2,p_2)$.  Use the coordinates in $K_{2c}$
    (which in this case are the same as the coordinates in $\{p_2<2c\}$) to
    trivialize the bundle $\pi^{-1}\{p_2<2c\}$, and the coordinates on
    $T^*\bT^2\bs K_c$ to trivialize $\pi^{-1}\{p_2>c\}$. The Luttinger
    surgery has the effect of gluing these trivial bundles with a Dehn
    twist in the overlap, i.e. cylinder in second trivialization is
    the cylinder in first trivialization with a positive $k$ Dehn
    twist applied to it.
  \end{enumerate}
\end{remark}
%
%
% A single parameter Luttinger surgery can be defined for the foliated
% cotangent bundle $T^*(X,\F)$, where
% $(X,\F):=\bS^1 \times (A^2,\F_{ah})$. As in the proof of Proposition
% \ref{prop:folcylinder}, we assume $q_1:X \to \bS^1$ is the
% coordinate on the $\bS^1$ factor, and let $(q_2,\bt)$ be the leaf
% and transverse coordinates on $\Sig$ ($\bt$ is only locally
% defined).  Let $(p_1,p_2)$ be the corresponding co-ordinates on the
% fibers of $T^*\F$. Let $K_r:=\{x \in T^*(X,\F): p_1(x), p_2(x)<r\}$.
% Given $\sig:=(k,c) \in \Z \times \R_{>0}$, define
% $$\Lutt_\sig(T^*(X,\F)):=(T^*(X,\F) \bs K_c ) \cup_{\psi_\sig} K_{2c},$$
% where $\psi_\sig :K_{2c} \bs K_c \to K_{2c} \bs K_c$ is defined as
% $$\psi_\sig(\bt,q_1,q_2,p_1,p_2):=(\bt,q_1 + k \chi(p_2)\beta(p_1/c),q_2,p_1,p_2).$$

\begin{proposition} {\rm(Luttinger constants for almost horizontal
    filling)} \label{prop:luttah} Suppose $W^\infty_{ah}$ is a
  foliated filling of $M_{ah}:=\bS(FT^*X)$, where
  $X:=\bS^1 \times (A^2,\F_{ah})$, and $(A^2,\F_{ah})$ is the annulus
  with an almost horizontal foliation.  Denote by $\partial_+ A^2$ and
  $\partial_- A^2$ the inner and outer boundaries of the annulus.
  Suppose $(k_b^\pm,k^\pm)$ are the Luttinger parameters of the
  fillings of $\bS (FT^*(\bS^1 \times \partial_\pm A))$ (obtained by
  applying Proposition \ref{prop:lutt}). Then,
  \begin{enumerate}
  \item \label{part:kb} $k_b^\pm=0$,
  \item and $k^+ - k^- \geq 0$ is the number of nodal curves
    (Proposition \ref{prop:folcylend} \eqref{wb2}) in each non-compact
    leaf of the filling of $M_{ah}$.
  \end{enumerate}
 
  % \begin{equation}
  %   \label{eq:bdryext}
  %   F_\pm : \partial_\pm W \to \on{Lutt}_{\sig_\pm}(T^*\bT^2).
  % \end{equation}  
\end{proposition}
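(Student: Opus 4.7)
The plan is to use the holomorphic Lefschetz fibration on $W^\infty_{ah}$ constructed in Section~\ref{sec:holfol} to read off the Luttinger parameters of the two boundary $\bT^3$ fillings directly, and to match them with the combinatorial data $(k_\pm,\Gamma)$ introduced before Theorem~\ref{thm:ah}. Once this matching is in place, the identities in \eqref{eq:dehnsum} and Remark~\ref{rem:lutt} translate into the two statements of the proposition.

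For part~(a), I would fix a cylindrical almost complex structure $\hat J$ as in Proposition~\ref{prop:endstd}, so that the holomorphic foliation induced by $\ol \M_{\hat J}$ on the end $[R,\infty)\times M_{ah}$ is the standard one \eqref{eq:stdfol}. By construction every cylinder in this foliation has constant $(q_2,\bt)$, so its two asymptotic ends, lying on the Morse--Bott tori $\{\theta=0\}$ and $\{\theta=\pi\}$, share the same $q_2$-coordinate. Restricting to either boundary leaf of $M_{ah}$ (where $\bt$ is pinned to $\partial_\pm A$) yields a cylinder fibration of the corresponding $\bT^3$ filling whose cylinders connect Reeb orbits of matching $q_2$-label. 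Under the Luttinger model of Remark~\ref{rem:lutt}(i), the difference in $q_2$ between the two asymptotic orbits of a cylinder $\{(Q_2,P_2)=(\eta,\rho)\}$ is exactly $k_b^\pm\beta(\rho/c)$, which is nonzero for $|\rho|>c$ unless $k_b^\pm=0$; this forces $k_b^\pm=0$.

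For part~(b), with $k_b^\pm=0$ both boundary fillings are of the form $\Lutt_{\sig^\pm}(T^*\bT^2)$ with $\sig^\pm=(c,k^\pm,0)$. Under the identification $\ol\M\simeq (A,\F_{ah})\times[0,1]$ from Proposition~\ref{prop:moduliann}, the cylinder trivializations over $\{P_2<2c\}$ and $\{P_2>c\}$ of Remark~\ref{rem:lutt}(ii) coincide, after matching $P_2$ with the symplectization coordinate $a$, with the trivializations over $(-\infty,\eps)\times\partial_\pm A$ and $(-\eps,\infty)\times\partial_\pm A$ used to define the combinatorial $k_\pm$. Consequently $k^\pm=k_\pm$, and \eqref{eq:dehnsum} gives $k^+-k^- = \sum_{\gamma\in\Gamma}\wind(\gamma)$. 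A non-compact leaf of $\ol\M$ has the form $[0,1]\times L'$ for a non-compact leaf $L'\simeq\R$ of $(A,\F_{ah})$; each embedded singular circle $\gamma\subset\ol\M$, being transverse to the foliation and of winding number $\wind(\gamma)$ around the $\bS^1$-factor of $A$, meets this 2-dimensional leaf in exactly $\wind(\gamma)$ points, all of the same sign. Summing over $\gamma\in\Gamma$ yields that the number of nodal curves in each non-compact leaf of $W^\infty_{ah}$ equals $k^+-k^-$, which is nonnegative because each singular fiber of a Lefschetz fibration contributes a positive Dehn twist.

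The principal obstacle is making the identification $k^\pm=k_\pm$ rigorous, which amounts to a monodromy computation on the boundary leaf: one must verify that the mapping class element of $\pi_1(\R\times \bS^1)\simeq\Z$ obtained by comparing the cylindrical-end trivialization of the holomorphic foliation with the standard trivialization of the Luttinger model coincides with the Dehn-twist exponent appearing in the Luttinger gluing map $\psi_{\sig^\pm}$. This uses Proposition~\ref{prop:endstd} together with the fact that $\hat J$ can be chosen to agree with the almost complex structure $J_{\sig^\pm}$ of Remark~\ref{rem:lutt} near infinity, up to a homotopy through compatible almost complex structures that does not change the isotopy class of the cylinder fibration.
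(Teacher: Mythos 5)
Your part (b) is essentially the paper's argument: a monodromy count on a non-compact leaf of $\ol\M$, using that the fibration is trivialized near the top and bottom boundaries and that each nodal point contributes a positive Dehn twist; and your reduction of the remaining ``obstacle'' $k^\pm=k_\pm$ to a comparison of trivializations is the right move. The problem is part (a). You try to detect $k_b^\pm$ from the cylinders lying in the region where the holomorphic foliation is standard, but the Luttinger surgery takes place in the compact set $K_{2c}$, and the model $\Lutt_\sig(T^*\bT^2)$ is standard near its ends for \emph{every} value of $k_b$; so standardness of the foliation on $[R,\infty)\times M_{ah}$ is consistent with any $k_b^\pm$ and cannot force it to vanish. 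Concretely, the offset $k_b\beta(\rho/c)$ of Remark \ref{rem:lutt} is an integer for $|\rho|>c$, hence zero as a point of the $\bS^1$-valued coordinate $q_2$ --- your claim that it is ``nonzero for $|\rho|>c$ unless $k_b^\pm=0$'' is false. The parameter $k_b$ is the winding of the relative offset of the two asymptotic orbits as $\rho$ traverses the transition region $|\rho|\leq c$, and the cylinders with such $\rho$ pass through the compact part of the filling, exactly where you have no control. In short, $k_b$ is not visible from the leafwise structure near infinity; a single $\bT^3$-leaf can be filled with any $k_b$ by Wendl's classification, so no purely leafwise argument can work.

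The paper's proof of (a) is genuinely foliated: the identification of $\partial_\pm W^\infty_{ah}$ with $\Lutt_\sig(T^*\bT^2)$ makes the loop $\gamma_\theta + k^\pm\gamma_{q_1}+k_b^\pm\gamma_{q_2}$ nullhomotopic in the filling, hence its holonomy transport in the ambient foliation is trivial; but $\gamma_{q_1}$ and $\gamma_\theta$ have trivial holonomy, while $\gamma_{q_2}$ has infinite-order holonomy because the non-compact leaves of $\F_{ah}$ spiral onto the compact boundary leaf. This forces $k_b^\pm=0$. You need an input of this kind --- an invariant of the ambient foliation rather than of the single boundary leaf --- to close the gap in part (a).
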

\begin{proof}
  There is an identification of the cylindrical end
  $F:[R,\infty) \times M_{ah} \to W^\infty_{ah}$ for a large $R>0$.
  By Proposition \ref{prop:lutt}, the restrictions
  $F_\pm:=F|_{\bS(FT^*(\bS^1 \times \partial_\pm A))}$ extend to
  diffeomorphisms
  \begin{equation}
    \label{eq:bdryext}
    F_\pm:\Lutt_{(k_b^\pm,k^\pm,c^\pm)} (T^*\bT^2) \to \partial_\pm W^\infty_{ah},
  \end{equation}
  where $\partial_\pm W^\infty_{ah}$ are boundary components of
  $W^\infty_{ah}$.  Suppose $(q_1,q_2,\theta)$ are coordinates on
  $\bT^3 \simeq \bS(T^*\bT^2)$ defined as in the proof of Proposition
  \ref{prop:folcylinder} -- i.e. the unit cotangent fiber has
  coordinate $\theta$, the $\bS^1$ factor in $X$ has coordinate $q_1$,
  and the boundary of the annulus has coordinate $q_2$.  We denote
  loops in these directions as $\gamma_{q_1}$, $\gamma_{q_2}$ and
  $\gamma_\theta$.  The diffeomorphisms \eqref{eq:bdryext} implies
  that the loop
  $\gamma_{\theta}+ k^\pm\gamma_{q_1} + k_b^\pm\gamma_{q_2}$ is
  contractible in $\partial_{side,\pm}W_{ah}$. The loops
  $\gamma_{q_1}$ and $\gamma_{\theta}$ have trivial holonomy transport
  and $\gamma_{q_2}$ has non-trivial holonomy transport. This implies
  $k_b^\pm=0$.

  The moduli space of nodal curves $\M_{nodal}$ intersects each of the
  non-compact leaves in $\ol \M$ at a finite number of points. Any
  non-compact leaf $\L$ is of the form $\R \times [0,1]$. The
  monodromy map on a loop that encloses all the points in
  $\M_{nodal} \cap \L$ is $k_1^+ -k_1^-$, indeed the fibration is
  trivial in a neighbourhood of $\R \times \{0\}$ and
  $\R \times \{1\}$, and going down along $\{t\} \times [0,1]$ for a
  large $t$, one picks up a monodromy of $k_1^+$, and if $t$ is a
  large negative number, the monodromy is $k_1^-$, see Remark
  \ref{rem:lutt}.
\end{proof}

\subsection{Combinatorial type}
\label{subsec:combi}
In Section \ref{subsec:holfill}, we showed that the filling of the
foliated unit cotangent bundle of $\bS^1 \times (A^2,\F_{ah})$ is a
Lefschetz fibration over $\R \times (A^2,\F_{ah})$ with a standard
structure on the non-compact ends.  In this section, we recall the
definition of a smooth Lefschetz fibration, and we give combinatorial
invariants for these fibrations that provide a classification up to
symplectic deformation equivalence.
The total space of the fibration is allowed to be a strong or weak symplectic foliation, the combinatorial data associated to the fibration are slightly different in the weak and strong cases.

We consider Lefschetz fibration
whose regular fibers 
are cylinders
\begin{equation*}
  F:=\{(p_1,q_1) \in \R \times \R/\Z\}, \quad \om_F:=dq_1 \wedge dp_1.
\end{equation*}

\begin{definition}\label{def:folTLT}
A {\em Lefschetz fibration on a foliated base manifold} with fiber $F$
consists of a foliated five-manifold $(X,\F_X)$ and a map to a foliated three
manifold $(B,\F_B)$
\begin{equation*}
  \pi:(X,\F_X) \to (B,\F_B)
\end{equation*}
that is a map of foliations in the sense that $\F_X:=\pi^*\F_B$ and which satisfies 
 the following: For any $x_0 \in X$,
\begin{enumerate}
\item either $d\pi_{x_0}$ is surjective, or
\item there are coordinates $(z_1,z_2,t) \in \C^2 \times \R$ in a neighborhood $U_{x_0} \subset X$ of $x_0$ and coordinates $(w,\tau) \in \C \times \R$ on a neighborhood $U_{\pi(x_0)} \subset B$ of $\pi(x_0)$ such that $t$, $\tau$ are constant on leaves of $\F_X$, $\F_B$ respectively, and
  \[((w,\tau) \circ \pi)=(z_1^2 + z_2^2, t) \]
  on $U_{x_0}$. The point $x_0$ is a {\em singular point} and $\pi(x_0)$ is a {\em singular value}.
\end{enumerate}
 For a regular value $b \in B$, $\pi^{-1}(b) \simeq F$ and for a singular value $b$, for which $\pi^{-1}(b)$ has $k$ singular points, the fiber is a nodal curve consisting of two open disks and $(k-1)$-spheres. connected to each other at nodal points.
 This ends the definition.
\end{definition}

A Lefschetz fibration $\pi:X \to \R^2$ is {\em compatible with a
  symplectic form $\om$} on $X$,
 if the regular fibers of $\pi$ are symplectic, and in case of singular fibers, the irreducible components are symplectic.

We consider Lefschetz fibrations over the foliated base manifold
\begin{equation*}
  B:=\R \times (A^2,\F_{ah}), \quad p_2:B \to \R, \quad q_2 : A^2 \to \bS^1.
\end{equation*}
Here $\F_{ah}$ is the almost horizontal foliation on the annulus
$A^2$, $p_2$ is the projection to the first coordinate, and $q_2$ is a
$\bS^1$-valued leaf coordinate. The two-form
\begin{equation*}
  \om_B:=dq_2 \wedge dp_2
\end{equation*}
is a strong symplectic form on $B$.

We study foliated symplectic Lefschetz fibrations
$\pi:X \to \R \times A^2$ whose structure is {\em standard on
  ends}. That is, there is a foliated symplectomorphism
\begin{equation}
  \label{eq:ends}
  i = (\pi,(p_1,q_1)):X \bs K \to ((\R \times A^2) \times F,\om_0:=\om_B \oplus \om_F )\bs K_0
\end{equation}
that lifts the identity map on the base $\R \times A^2$, and
$K \subset X$, $K_0 \subset \R \times A^2 \times F$ are compact sets.
Since the Lefschetz fibration is standard on the non-compact ends as
in \eqref{eq:ends}, there are no singular fibers on the boundary
$\R \times \partial A^2$; the reason is as in the second half of the
proof of Proposition \ref{prop:nodein}, and is a monodromy
calculation.

We now show that the $p_1$-coordinate can be extended to the whole space $X$. Later the `level' of a singular point is defined as the $p_1$-coordinate;
the levels of singular points are part of the symplectic Lefschetz data of a fibration.
The $p_1$-coordinate is extended via parallel transport on the symplectic foliated fibration $X \to B$ using the two-form  $\om \in \Om^2(X)$.  The form
$\om$ induces a splitting of $T_xX$ for any $x \in X$ as
\begin{equation}\label{eq:cylconn}
  T_xX = TF_x \oplus H_x, \quad H_x:=\{\xi \in T_xM: \om(\xi,\tau)=0 \enspace\forall \tau \in TF_x\}.
\end{equation}
On a path $\gamma:[0,1] \to B$ not passing through singular values of
the fibration, the resulting parallel transport map
\begin{equation}
  \label{eq:par-transp}
  \Phi_t^\gamma: F_{\gamma(0)} \to F_{\gamma(t)}
\end{equation}
is a symplectomorphism for any $t$ if $\gamma$ lies on a leaf. If 
$\om$ is a strong symplectic form, $\Phi_t^\gamma$ is a symplectomorphism for any path $\gamma$ in $B$.

\begin{lemma}\label{lem:volinmid}
  Suppose $b \in B$ and $F_b$ is a fiber of $X \to B$ for which the
  $p_1$ coordinate from \eqref{eq:ends} is well-defined in $\{p_1>a_+\} \cup \{p_1<a_-\}$
  for some constants $a_-\leq a_+$. Then, the compact region of $F_b$
  between the levels $a_+$, $a_-$ has volume $a_+-a_-$.
\end{lemma}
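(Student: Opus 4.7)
The plan is to reduce the computation on an arbitrary fiber $F_b$ to a standard cylindrical fiber via symplectic parallel transport. Suppose first that $b$ is a regular value of $\pi$. The horizontal distribution $H$ defined in \eqref{eq:cylconn} gives a symplectic connection on the regular locus of $\pi$, so parallel transport along any smooth path in $B$ avoiding singular values induces a symplectomorphism between fibers. Since $\pi(K) \subset B$ is compact while $B = \R \times A$ is non-compact, pick a reference point $b_0 \in B \bs \pi(K)$; then $F_{b_0} \subset X \bs K$ is identified via the map $i$ of \eqref{eq:ends} with the standard cylinder $(\R \times \bS^1, dq_1 \wedge dp_1)$. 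Since the singular values of $\pi$ form a one-dimensional subset of the three-dimensional base $B$, there is a smooth path $\gamma : [0,1] \to B$ from $b$ to $b_0$ avoiding them; let $\phi_\gamma : F_b \to F_{b_0}$ denote the induced symplectic parallel transport.

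The key point is that $\phi_\gamma$ preserves the $p_1$ coordinate on the standard ends of $F_b$. In the product trivialization $i : X \bs K \to (B \times F) \bs K_0$, the form $\om$ splits as $\om_B \oplus \om_F$, so the horizontal distribution there is precisely the $B$-factor $TB \oplus 0$. For a point with fiber coordinate $f = (p_1, q_1)$ satisfying $p_1 > a_+$ (which by the hypothesis means $(b', f) \in X \bs K$ for every $b'$ along $\gamma$, since we may arrange the standard end region to be controlled purely by the size of $p_1$), the horizontal lift of $\gamma$ through $(b, f)$ is the constant path $(\gamma(t), f)$ and stays inside $X \bs K$. Consequently $\phi_\gamma$ sends $F_b \cap \{p_1 = a_\pm\}$ onto $F_{b_0} \cap \{p_1 = a_\pm\}$, and therefore maps the compact middle region $R_b := F_b \cap \{a_- \leq p_1 \leq a_+\}$ onto $R_{b_0} := F_{b_0} \cap \{a_- \leq p_1 \leq a_+\}$. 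A direct computation on the standard cylinder gives $\on{vol}(R_{b_0}) = \int_0^1 \int_{a_-}^{a_+} dp_1 \, dq_1 = a_+ - a_-$, and since $\phi_\gamma$ is symplectic, $\on{vol}(R_b) = a_+ - a_-$ as well.

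For a singular value $b$, I would approximate by a sequence of regular values $b_n \to b$; the identity $\on{vol}(R_{b_n}) = a_+ - a_-$ passes to the limit by continuity of fiberwise volume on compact regions, which follows from the standard Lefschetz normal form $\{z_1 z_2 = \eps\}$ around each node, where the volume of a fixed compact region depends continuously on $\eps$ and extends continuously to $\eps = 0$. The main delicate point is justifying that horizontal lifts of $\gamma$ starting in the ends $\{p_1 > a_+\}$ or $\{p_1 < a_-\}$ remain in the standard region $X \bs K$ throughout the path, which is what enables $\phi_\gamma$ to preserve $p_1$; this is the essential use of the "standard on ends" hypothesis \eqref{eq:ends}.
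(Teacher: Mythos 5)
Your argument is correct and is essentially the paper's proof: choose a reference fiber contained entirely in the standard region, parallel transport along a path of regular values, and use that the symplectic connection is the trivial product connection on the ends so that the $p_1$-levels (hence the enclosed volume) are preserved; the paper handles the singular fiber by parallel transport on the complement of the nodes rather than by your limiting argument, but both work. One small point to tighten: the constant horizontal lift through a point of $F_b$ with $p_1$ only slightly larger than $a_+$ need not stay in $X \setminus K$ over every point of $\gamma$ (the hypothesis controls only the fiber over $b$), so one should first transport the level sets $\{p_1 = \pm R\}$ for $R$ large enough to be uniformly in the standard region along the compact path, and then subtract the volumes of the standard product regions $\{a_+ \le p_1 \le R\}$ and $\{-R \le p_1 \le a_-\}$ of $F_b$, each computed directly in the coordinates given by \eqref{eq:ends}.
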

\begin{proof}
  Take a point $b_0 \in B$ in the same leaf as $b$ for which the fiber
  $F_{b_0} \subset X\bs K$ and thus has the standard form, and the
  $p_1$ coordinate is well-defined for all of $F_{b_0}$. If $F_b$ is a
  regular fiber, the lemma is proved by parallel transporting along a
  path from $b_0$ to $b$ not passing through a singular
  point. Parallel transport is also well-defined on regions of the
  singular fiber away from the singular point, and so the lemma can be
  proved in a similar way.
\end{proof}

Definition \ref{def:folTLT} implies that singular points form 
transverse one-dimensional manifolds in $X$. By \eqref{eq:wind},  
each open leaf in
$X$ has a finite number of singular points, and therefore, the set of
singular points is a collection of embedded transverse loops in $X$.

We now define the {\em level} of a singular point of the foliated
Lefschetz fibration as its $p_1$-coordinate. In the case when the
foliated fibration has a strong symplectic form, the level is constant
on any connected component of singular points.

\begin{definition}
  {\rm(Level of a singular point)} Suppose $x$ is a singular point of
  the Lefschetz fibration $\pi:X \to B$, and $b:=\pi(x)$. Assume $x$
  is the only singular point in the fiber $F_b$, and so, the fiber
  consists of two disks -- an upper disk $F^u_b$ and a lower disk
  $F^l_b$. Suppose the $p_1$ co-ordinate is well-defined on $F_b$ for
  $p_1>a_+$ and $p_1<a_-$. Then, the {\em level} of the singular point
  $x$ is
  \begin{equation*}
    \level(x):=a_+ - \int_{F^u_b \cap \{p_1 \leq a_+\}}\om. 
  \end{equation*}
  As a consequence of Lemma \ref{lem:volinmid}, the level of $x$ can
  equivalently be defined as
  \begin{equation*}
    \level (x):=a_- + \int_{F^l_b \cap \{p_1 \geq a_-\}}\om. 
  \end{equation*}
  In case of multiple singular points on a fiber, the definition of
  level can be extended in an obvious way.
\end{definition}

\begin{remark}
  By Lemma \ref{lem:volinmid}
  there is an extension of the 
  definition of $p_1$ to
  \begin{equation}
    \label{eq:p1x}
    p_1:X \to \R
  \end{equation}
  such that for any fiber $F_b$, and constants $b_- \leq b_+$,
  \begin{equation*}
    \int_{F_b \cap \{b_-\leq p_1 \leq b_+\}}\om_F=b_+ - b_-,
  \end{equation*}
  and on a singular fiber, the level sets of $p_1$ are connected. As a
  result, on a singular fiber, a level set of $p_1$ is either a circle
  contained in one of the irreducible components or a singular
  point. The $p_1$-coordinate of a singular point is equal to its
  level.  The extension of $p_1$ is uniquely defined only up till
  Hamiltonian diffeomorphism of the fiber.
\end{remark}

{\bf In the rest of the section, we assume that the foliated filling has a strong symplectic form. The case of the weak symplectic form is discussed in Remark \ref{rem:weak-data}.}

The next lemma shows that parallel transport preserves the
$p_1$-coordinate up to Hamiltonian diffeomorphism of the fiber.
\begin{lemma}\label{lem:levelset}
  Suppose $X \to B$ is a strong symplectic foliated Lefschetz fibration,
  and $F_{b_0}$, $F_{b_1}$ are regular fibers of $X \to B$ over
  $b_0, b_1 \in B$. Let $\Phi^\gamma:F_{b_0} \to F_{b_1}$ be the
  parallel transport map along a path $\gamma$ in $B$ from $b_0$ to
  $b_1$.  Then, there is a Hamiltonian diffeomorphism of $F_{b_1}$
  that maps the loop $\Phi^\gamma(\{p_1=a\} \cap F_{b_0})$ to
  $\{p_1=a\} \cap F_{b_1}$.
\end{lemma}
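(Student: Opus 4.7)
The plan is to first use the standard product structure on the ends of $X$ to identify the parallel transport $\Phi^\gamma$ as a symplectomorphism that is the identity on the two ends of the cylindrical fiber, and then apply a Moser-type argument on the cylinder.

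First I would examine the symplectic connection \eqref{eq:cylconn} in the standard region $X\setminus K$, where under the identification \eqref{eq:ends} the symplectic form splits as $\om_B\oplus\om_F$. Here the $\om$-orthogonal complement of $TF$ is exactly $TB$, since the two summands of $\om$ live on complementary subbundles. Hence the symplectic connection is flat in this region. Because the path $\gamma$ has compact image in $B$, one can choose constants $a_-<a_+$ so that the standard model covers every fiber region $\{p_1>a_+\}$ and $\{p_1<a_-\}$ over $\gamma$. Under the resulting standard trivializations of the ends of $F_{b_0}$ and $F_{b_1}$, the parallel transport $\Phi^\gamma$ restricts to the identity on these two ends.

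Second, I would extract the geometric consequence for the image loop $\delta:=\Phi^\gamma(\{p_1=a\}\cap F_{b_0})\subset F_{b_1}$. Since $\Phi^\gamma$ is the identity on the ends, $\delta$ is an embedded loop contained in a compact region of the cylinder $F_{b_1}$, and it separates the two ends of $F_{b_1}$, hence is essential. Moreover, $\Phi^\gamma$ is a symplectomorphism, so applying Lemma \ref{lem:volinmid} in $F_{b_0}$ and in $F_{b_1}$ shows that the 2-chain between $\delta$ and $\{p_1=a_+\}\cap F_{b_1}$ has $\om_F$-area equal to $a_+-a$, matching the area between $\{p_1=a\}\cap F_{b_1}$ and $\{p_1=a_+\}\cap F_{b_1}$. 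Equivalently, $\delta$ and $\{p_1=a\}\cap F_{b_1}$ cobound a 2-chain of signed area zero.

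Third, I would conclude with the standard fact that two essential embedded loops in a cylinder that cobound a zero-signed-area 2-chain are Hamiltonian isotopic via a compactly supported Hamiltonian. Concretely, choose a smooth family $(\delta_t)_{t\in[0,1]}$ of essential embedded loops from $\delta$ to $\{p_1=a\}\cap F_{b_1}$, each bounding the same area against $\{p_1=a_+\}\cap F_{b_1}$; the normal velocity of this family has zero mean along each $\delta_t$ with respect to arc-length, so it is Hamiltonian on $\delta_t$ and extends to a compactly supported time-dependent Hamiltonian on $F_{b_1}$ whose time-$1$ flow sends $\delta$ to $\{p_1=a\}\cap F_{b_1}$.

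The main technical point is the flatness of the symplectic connection in the standard region, which reduces the problem to a statement purely internal to one fiber cylinder; the Moser step is then routine once the two loops have been shown to cobound zero signed area.
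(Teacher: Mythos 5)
There is a genuine gap in your first step, and it is exactly the point the lemma exists to address. The identification of the ends in \eqref{eq:ends} is only a \emph{foliated} symplectomorphism: by the paper's definition this means $i^*(\om_B\oplus\om_F)$ agrees with $\om$ only after restriction to the leaves of $\F$. It does \emph{not} give $\om=\om_B\oplus\om_F$ as closed two-forms on $X\setminus K$, so the $\om$-orthogonal complement $H_x$ of $TF_x$ in \eqref{eq:cylconn} is controlled by the product structure only in the leafwise base directions. In the one direction transverse to the foliation, $\om$ is unconstrained by the standard model, the connection need not be flat there, and parallel transport along a path $\gamma$ that crosses leaves need not restrict to the identity on the ends of the fiber. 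Consequently your area computation only shows that $\delta=\Phi^\gamma(\{p_1=a\}\cap F_{b_0})$ cobounds zero signed area with $\{p_1=a+s\}\cap F_{b_1}$ for some a priori unknown offset $s$ (depending on $\gamma$ but, by volume preservation, not on $a$); it does not force $s=0$. Your argument is correct, and coincides with the paper's, precisely for paths contained in a leaf.

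The missing idea is how to kill this offset for transverse paths. The paper introduces the offset function $s^\gamma$, shows it is invariant under \emph{foliated homotopy} of paths (homotopies moving each point within its leaf, using that contractible loops induce Hamiltonian monodromy), and then exploits the specific holonomy of the almost horizontal foliation: a path emanating from the compact boundary leaf $\partial_+A$ can be foliated-homotoped to a reparametrization of an arbitrarily short initial segment of itself, so continuity of $s^\gamma$ forces $s^\gamma\equiv 0$; every path in $A$ avoiding $\partial_-A$ is foliated-homotopic to a piece of such a path. This is where the hypothesis that the base is $(A,\F_{ah})$ enters; without it the conclusion can fail (the Luttinger parameter $k_b$ in Remark \ref{rem:lutt} is essentially such an offset). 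Your second and third steps (the zero-area criterion and the compactly supported Hamiltonian isotopy between essential embedded loops in the cylinder) are fine and match what the paper uses implicitly, but they cannot substitute for the vanishing of the offset.
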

\begin{proof}[Proof of Lemma \ref{lem:levelset}]
  For the proof of the lemma, we define an offset function along a
  path, which measures the amount by which parallel transport changes
  the $p_1$ coordinate. Suppose $\gamma :[0,1] \to B$ is a path of
  regular values, with a parallel transport map
  $ \Phi_t^\gamma:F_{\gamma(0)} \to F_{\gamma(t)}$ (as in \eqref{eq:par-transp}).
  The {\em offset}
  function $s^\gamma:[0,1] \to \R$ is defined by the following
  condition: for any $a \in \R$ the loop $\Phi_t^\gamma(\{p_1 =a \})$ is
  Hamilton isotopic to $\{p_1=a+s^\gamma(t)\} \subset F_{\gamma(t)}$.
  The value of $s^\gamma(t)$ does not depend on $a$ because parallel
  transport preserves volume on the fiber.

  If the path $\gamma$ lies on a leaf, then $s^\gamma\equiv 0$. Indeed,
  the symplectic form on the leaf is standard on the cylindrical ends,
  and parallel transport by the standard symplectic form preserves the
  $p_1$-coordinate in the ends. This is not true if $\gamma$ does not
  lie on a leaf because the identification $i$ (from \eqref{eq:ends})
   on the non-compact ends
  is only a foliated symplectomorphism, and not an equivalence of
  strong symplectic two-forms.

  The offset function is preserved by foliated homotopy of paths:  If two
  paths $\gamma_0$ and $\gamma_1$ in $B$ are related by foliated
    homotopy (as in Definition \ref{def:holonomy} \eqref{part:hol1}), then $s^{\gamma_0} \equiv s^{\gamma_1}$
  because the parallel transport map induced by a contractible loop is
  a Hamiltonian isotopy, see Theorem 6.21 in \cite{MS:intro}, and the
  offset function is constant for a path lying on a leaf.

  We now prove the lemma by showing that the offset function vanishes
  for any path.  % Because of the homotopy-invariance of the offset
  % function, it is enough to describe a path $\gamma$ in the base
  % $(A \times [0,1],\F_{ah})$ by its projection to the
  % annulus.
  Consider a path $\gamma:[0,1] \to \R \times A^2$ such that
  $\gamma(0)$ is in the outer boundary leaf $\R \times \partial_+A^2$
  and the rest of the path maps to leaves in the interior of
  $\R \times A^2$.  There is a loop
  $\delta: \bS^1 \to \R \times \partial_+ A^2$ with non-trivial
  holonomy. As a result, for any small $\eps_0>0$, the path $\gamma$
  is foliated homotopy equivalent to a path $\gamma':[0,1] \to A^2$,
  such that $\gamma'$ is a reparametrization of $\gamma|[0,\eps]$ for
  some $\eps<\eps_0$. Indeed, in the foliated homotopy from $\gamma$
  to $\gamma'$, the starting point $\gamma(0)$ goes around the loop
  $\delta$ sufficiently many times.  Since the offset function is
  continuous $s^\gamma=s^{\gamma'}$, we conclude that
  $s^\gamma \equiv 0$. Any path in $\R \times A^2$ that does not intersect the
  inner boundary leaf $\R \times \partial_-A^2$ is foliated homotopy
  equivalent to a subset of the path $\gamma$ or $-\gamma$. Therefore,
  the offset function vanishes along any path, concluding the proof of
  the lemma.
\end{proof}

\begin{lemma}
  In a strong symplectic foliated Lefschetz fibration $X \to B$, the level is
  constant on any connected component of singular points.
\end{lemma}
\begin{proof}
  Let $\gamma:[0,1] \to B$ be a path of singular values of the
  Lefschetz fibration. We assume that the fibers $F_{\gamma(t)}$ has
  one singular point, the proof of the general case is similar.  Each
  singular fiber consists of two disks. Let $F^u_{\gamma(t)}$ be the
  upper disk. Then, $F^u_\gamma:=\cup_tF^u_{\gamma(t)}$ is a disk
  bundle over $[0,1]$.  Suppose
  $\Phi_t:F^u_{\gamma(0)} \to F^u_{\gamma(t)}$ is the parallel
  transport map in $F^u_\gamma$ in the direction
  $\ker(\om|_{F^u_\gamma})$.  Suppose the level set $\{p_1=a\}$ is a
  loop in each of the disks $F^u_{\gamma(t)}$, which we denote by
  $\mu_{t,a}$.  Then, by Lemma \ref{lem:levelset}, $\Phi_t(\mu_{0,a})$
  is Hamilton-equivalent to the loop $\mu_{t,a}$ in the fiber
  $F_{\gamma(t)}$.  So, both loops enclose an equal area in the disk
  $F_{\gamma(t)}$. Since, parallel transport preserves volume in the
  fiber, we can conclude that the quantity
  $\int_{F^u_{\gamma(t)}\cap \{p_1 \leq a\}}\om$ is
  $t$-independent. So, $\level(\gamma(t))$ is $t$-independent.
\end{proof}

We now describe the data associated to a strong symplectic foliated Lefschetz
fibration $X \to B$ with a standard structure on ends.%   When the
% fibration is restricted to the interior leaves of $\R \times A^2$, it is
% an $\bS^1$-family of Lefschetz fibrations.  The mapping class group of
% the fiber is $\Z$, which is Abelian. Therefore, Hurwitz moves do not
% alter the vanishing cycles. As a result, the base and arc system
% information in the topological Lefschetz fibration datum are not
% relevant.  The vanishing cycle is same for all singular
% points. However, in the symplectic case, we additionally keep track of
% the level data.
%
To each boundary leaf $\pi^{-1}(\R \times \partial_\pm A)$, we
associate an integer $k_\pm$, which is the Dehn twist of the fiber
along a line $\{c\} \times \R$, $c \in \partial_\pm A^2$. This quantity
is indeed meaningful, because via the map $i$, the fibers at the ends
of the line $\{c\} \times \R$ have fixed identifications to $F$.

Suppose $\Gamma$ is the set of connected components of singular points
in the fibration $X \to B$. Each component $C_j$, $j \in \Gamma$,
projects to a loop $\gamma_j$ in the base $\R \times A^2$. The loop
$\gamma_j$ does not have self-intersections because the level is
constant on the singular value component $C_j$.  Let $\wind(\gamma_j)$
be the winding number of $\gamma_j$ in $\R \times A^2$. Then, by a
holonomy calculation,
\begin{equation}\label{eq:wind}
  k_+ - k_-= \sum_{j \in \Gamma}\wind(\gamma_j). 
\end{equation}

\begin{definition}
  {\rm(Symplectic Lefschetz datum)}
\label{def:sld}
  The {\em symplectic Lefschetz
    datum} for a foliated strong symplectic  Lefschetz fibration $X \to B$ that is standard on
  ends consists of
  \begin{enumerate}
  \item a collection of embedded loops
    $\{\gamma_j:\bS^1 \to B\}_{j \in \Gamma}$
    of singular values 
    transverse to the
    foliation, that are indexed by a finite set $\Gamma$;
  \item Dehn twists $k_\pm \in \Z$ on the boundary leaves satisfying
    \eqref{eq:wind};
  \item $\level_{\gamma_j} \in \R$ for each loop $j \in \Gamma$. If
    two loops $\gamma_{j_1}$, $\gamma_{j_2}$ intersect in
    $\R \times A^2$, then, their levels cannot coincide :
    $\level_{\gamma_{j_1}} \neq \level_{\gamma_{j_2}}$.
  \end{enumerate}
\end{definition}

Two embedded loops $\gamma_0, \gamma_1 :\bS^1 \to \R \times A^2$ have
the same {\em braid type} if they are connected by a homotopy of
embedded loops. Two loops with the same braid type necessarily have
the same winding number.

\begin{definition}
  {\rm(Combinatorial type)} The combinatorial type of a strong symplectic foliated Lefschetz
  fibration $\pi:X \to B$ is
  \begin{enumerate}
  \item the braid type of each component of singular points, and
  \item the Dehn twists $k_\pm$ on the boundary leaves.
  \end{enumerate}
\end{definition}

\begin{remark}\label{rem:link}
  The linking between the singular value loops is not preserved in a
  homotopy of symplectic Lefschetz data. However, the braid type of a
  single loop is preserved. This is because the level is constant on a
  single loop of singular values, and so, the loop is embedded in the
  base space $\R \times A^2$. However, two different components of
  singular values can intersect in the base space if the levels are
  different on the two components.
\end{remark}

\begin{proposition}\label{prop:homo-sympdata}
  Suppose $\Phi_0$, $\Phi_1$ are symplectic Lefschetz data for
  fibrations on $\R \times (A^2,\F_{ah})$  whose regular fibers are
  cylinders. Suppose $\Phi_0$, $\Phi_1$ have the same combinatorial
  type. Then, there is a $[0,1]$-family $\Phi_t$ of symplectic
  Lefschetz data that connects $\Phi_0$ to $\Phi_1$.
\end{proposition}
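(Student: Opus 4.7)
The plan is to interpolate the loops and the levels as two separate deformations and then combine them, leveraging the fact that the definition of combinatorial type encodes precisely the obstructions to deforming the loop geometry, while the level data is constrained only by an open, codimension-one condition.

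First I would deform the loops. For each $j\in\Gamma$, the loops $\gamma_j^0$ and $\gamma_j^1$ have the same braid type by hypothesis, hence are homotopic through embedded loops in $\R\times A$. Running these homotopies independently yields a smooth family $\{\gamma_j^t\}_{t\in[0,1]}$ of (individually) embedded loops in $\R\times A$, with correct endpoints. The loops for different $j$ may intersect each other during the homotopy; after a small generic perturbation of each individual homotopy within its braid type (which does not affect the combinatorial type), I would arrange that for every pair $j_1\neq j_2$ the intersection $\gamma_{j_1}^t\cap\gamma_{j_2}^t$ is transverse except at finitely many times where a pair of intersection points is born or dies, and that these birth/death times are distinct across all pairs $(j_1,j_2)$. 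Consequently the subset
\[
T_{j_1,j_2}:=\{t\in[0,1]:\gamma_{j_1}^t\cap\gamma_{j_2}^t\neq\emptyset\}\subset[0,1]
\]
is a finite union of closed intervals with generic endpoints.

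Next I would construct the level functions $\level_j:[0,1]\to\R$ with $\level_j(0)=\level^0_{\gamma_j}$ and $\level_j(1)=\level^1_{\gamma_j}$, subject to the admissibility condition $\level_{j_1}(t)\neq\level_{j_2}(t)$ whenever $t\in T_{j_1,j_2}$. Start with the straight-line interpolation $\level_j(t):=(1-t)\level^0_{\gamma_j}+t\level^1_{\gamma_j}$. For any pair $(j_1,j_2)$ with $T_{j_1,j_2}\neq\emptyset$, the equation $\level_{j_1}(t)=\level_{j_2}(t)$ defines at most one time $t_*\in[0,1]$ (two linear functions of $t$ meet at most once). If $t_*\in T_{j_1,j_2}$ is a bad coincidence, perturb $\level_{j_1}$ on an arbitrarily small neighborhood of $t_*$ by a smooth bump, keeping the endpoints fixed; since $T_{j_1,j_2}$ does not contain $0$ (because $\level^0_{\gamma_{j_1}}\neq\level^0_{\gamma_{j_2}}$ whenever $\gamma_{j_1}^0\cap\gamma_{j_2}^0\neq\emptyset$) nor $1$ by symmetry, such a perturbation does not disturb the endpoint conditions. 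Because only finitely many pairs need correction and each correction requires only a local bump, one can perform them sequentially without creating new coincidences.

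The combined data $(\{\gamma_j^t\},k_\pm,\{\level_j(t)\})$ is then a continuous family of symplectic Lefschetz data connecting $\Phi_0$ to $\Phi_1$: the loops remain embedded; the Dehn twists $k_\pm$ are the same at both endpoints (by hypothesis) and are already constrained by the winding-number identity \eqref{eq:wind}, which is preserved because braid type determines the winding number; and the level-distinctness condition is preserved by construction. The main obstacle is the combinatorial bookkeeping of Step two, namely controlling the simultaneous intersections of several loops so that the bad times $t_*$ in Step three are isolated and disjoint from $\{0,1\}$; once this is achieved, the level perturbations are straightforward because the constraint cuts out an open dense subset of the space of paths in $\R^{|\Gamma|}$.
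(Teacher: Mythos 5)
Your overall strategy --- homotope each loop within its braid type, allow distinct loops to collide along the way, and then choose level functions that stay distinct whenever the loops intersect --- is exactly the route the paper takes (its own proof is only a few lines of this same sketch). However, there are two related slips in your execution. First, the base $\R \times (A,\F_{ah})$ is \emph{three}-dimensional, so two embedded loops cannot meet transversally and are generically disjoint; in a generic one-parameter family they intersect only at finitely many isolated values of $t$, not along the ``finite union of closed intervals'' you describe. Second, and more substantively, your level correction does not work in the picture you set up: if $\level_{j_1}$ and $\level_{j_2}$ are linear and cross at $t_*$, adding a small bump supported near $t_*$ to one of them cannot remove the zero of their difference --- the difference still changes sign across a neighbourhood of $t_*$, so the crossing is merely relocated, not eliminated. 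If $T_{j_1,j_2}$ really were an interval containing $t_*$ in its interior, no small local bump could push the crossing outside it.

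The argument is rescued precisely by the correct dimension count: since $T_{j_1,j_2}$ is a finite set of times, relocating the crossing by an arbitrarily small amount generically moves it off $T_{j_1,j_2}$, and the finitely many corrections can be taken small enough not to interfere with one another or with the endpoints. (Your parenthetical that $0 \notin T_{j_1,j_2}$ ``because the levels differ at $t=0$'' is also backwards --- membership of $0$ in $T_{j_1,j_2}$ is determined by whether the loops intersect, not by the levels --- but what you actually need, namely that $t_* \neq 0,1$ whenever the loops intersect at an endpoint, does follow from the validity of $\Phi_0$ and $\Phi_1$.) With these repairs your proof is complete and coincides with the paper's.
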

\begin{proof}
  Suppose, for $k=0,1$, $\Phi_k$ consists of the collection of
  embedded loops $\{\gamma_j^k\}_{j \in \Gamma}$. For each $j$,
  $\gamma_j^0$ is homotopic to $\gamma_j^1$ via a family of embedded
  loops $\{\gamma_j^t\}_{t \in [0,1]}$.  Define the symplectic
  Lefschetz data $\Phi_t$ as consisting of the loops
  $\{\gamma_j^t\}_{j \in \Gamma}$ and level data $\level_j^t$ defined
  such that if $\gamma^t_{j_1}$ intersects $\gamma^t_{j_2}$, then,
  $\level_{j_1}^t \neq \level_{j_2}^t$. 
\end{proof}

\begin{proposition} {\rm(From symplectic Lefschetz data to smooth fibrations)}
  \label{prop:sm}
  \begin{enumerate}
  \item {\rm(Existence)}
    \label{part:sm1}
    Corresponding to a symplectic Lefschetz datum $\Phi$, there exists
    a smooth Lefschetz fibration $\pi: X\to B$ that has a standard
    identification \eqref{eq:ends} on its ends, and a coordinate
    $p_1: X \to \R$ that extends the $p_1$ coordinate on the ends, and 
    whose restriction $p_1|F_b$ to any fiber $F_b$, $b \in B$
    has critical points
   only at singular points of the Lefschetz fibration. 
  \item {\rm(Uniqueness)}
    \label{part:sm2}
    For $k=0,1$, suppose $\pi_k : X \to B$ is a smooth Lefschetz
    fibration corresponding to a symplectic Lefschetz datum $\Phi$,
    with an identification $i_k$ on ends and a level coordinate
    $p_{1,k} : X_k \to \R$. Then, there is a diffeomorphism
    $\phi : X_0 \to X_1$ satisfying $\pi_0=\pi_1 \circ \phi$,
    $p_0=p_1 \circ \phi$ on $X_0$, and $i_0=i_1 \circ \phi$ in the
    complement of a compact set of $X_0$.
  \item {\rm(A path of data)}
    \label{part:sm3}
    Suppose $\{\Phi_t\}_{t \in [0,1]}$ is a family of symplectic
    Lefschetz data.  Then, there is a corresponding family of smooth
    Lefschetz fibrations $\pi_t : X_t \to B$, each with an
    identification $i_t$ on ends and a level coordinate
    $p_{1,t} : X_t \to \R$ as in \eqref{part:sm1}. 
  \end{enumerate}
\end{proposition}
\begin{proof}
  For the proof of \eqref{part:sm1}, we delete a compact set from a trivial $F$-bundle to obtain 
  \[((\R \times A^2) \times F)\bs K_0,\]
  where $p_1$ is the $\R$-coordinate on the fiber $F$, $p_2$ is the $\R$-coordinate on the base $\R \times A^2$,
  and $K_0:=\{|p_1|, |p_2| \leq R\}$, for a large $R$. On the boundary
  $\R \times \partial_\pm A^2$, we glue in the bundle
  $[-R,R] \times \bS^1 \times F$, where the gluing is trivial along
  $\{-R\} \times \bS^1$, and with $k_\pm$-Dehn twist at
  $\{R\} \times \bS^1$. The side boundaries can be thickened so that
  the bundle is now defined on
  $(\R \times A^2)\bs (\bS^1 \times \mO)$. Here $\mO \subset \R^2$ is
  a large ball, $\bS^1 \times \mO$ has the product foliation, it does
  not intersect the boundary of $\R \times A^2$, and it contains all
  the singular points of the Lefschetz data. On any leaf, the
  monodromy of the fibration along $\partial \mO$ is $(k_+-k_-)$,
  which is equal to $k$, and therefore one may glue in a fibration
  with $k$ singular points with the prescribed values on the base
  given by $\gamma_j$, and $p_1$-level given by $\level_{\gamma_j}$.

  Conversely, for any two fibrations with the same datum, there is a
  diffeomorphism in a neighborhood of the boundary
  $\R \times \partial_\pm A^2$, since the both fibrations have the
  same Dehn twists. The diffeomorphism extends to the interior in an
  obvious way, since the fibration is locally trivial in the
  complement of singular values, and both $X_0$, $X_1$ have the same
  singular values and levels of the singular points.
  The proof of \eqref{part:sm3} is left to the reader.
\end{proof}

\begin{remark}\label{rem:weak-data}
  {\rm(Weak symplectic foliations)}
  In a weak symplectic foliated Lefschetz fibration, the level is not constant
  on a connected component of singular points. 
  Therefore, for a weak symplectic foliated Lefschetz fibration, the symplectic
  Lefschetz data (compare to Definition \ref{def:sld}) consists of
  immersed loops of singular values
  $\{\gamma_j:\bS^1 \to B\}_{j \in \Gamma}$ transverse to the
  foliation, Dehn twists $k_\pm \in \Z$ on the boundary leaves
  satisfying \eqref{eq:wind}, and level data consisting of a smooth
  map
  \[\level_{\gamma_j} : \bS^1 \to \R \]
  for each loop $j \in \Gamma$. If two loops $\gamma_{j_1}$,
  $\gamma_{j_2}$ intersect in $\R \times A^2$, then their levels
  cannot coincide at the point of intersection. The combinatorial type
  of a weak symplectic foliated Lefschetz fibration is defined before
  Theorem \ref{thm:ah} in the Introduction. With these definitions, it
  is seen in a straightforward way that the analogues of Propositions
  \ref{prop:homo-sympdata} and \ref{prop:sm} hold for weak symplectic
  foliations.
\end{remark}

\subsection{Symplectic forms on Lefschetz fibrations}\label{subsec:modelah}
The question of existence and uniqueness of compatible symplectic
forms on Lefschetz fibrations has been addressed by Gompf
\cite{Gompf_book}. However our situation differs from the standard
treatment firstly because of non-compact fibers. The second point of
difference is that the base manifold consists of an $\bS^1$-family of
open leaves $\R^2$ that accumulate on closed leaves $\R \times \bS^1 $
at the ends; this imposes a certain condition on the non-compact ends
of the open leaves. In this section, we construct a symplectic
Lefschetz fibration corresponding to any given symplectic Lefschetz
data. We also show that Lefschetz fibrations with the same
combinatorial data are symplectic deformation equivalent. We first
prove the results for strong symplectic foliations. The case of weak
symplectic foliations is discussed in Remark \ref{rem:wsf}.

\begin{proposition}{\rm(Existence of symplectic form on Lefschetz
    fibrations)}
  \label{prop:exist}
  Corresponding to any symplectic Lefschetz fibration data, there
  exists a foliated strong symplectic fibration $X$ on
  $B:=(A \times \R,\F_{ah},\om_B)$, with fiber $F$, and which is
  standard on ends (see \eqref{eq:ends}).
\end{proposition}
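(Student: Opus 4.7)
The plan is to construct $X$ by surgering the trivial fibration $X_0 := B \times F$, equipped with the closed two-form $\om_0 := \pi^*\om_B + \om_F$, along the prescribed loops $\gamma_j$. For each loop $\gamma_j \subset B$ at level $\ell_j$, choose a tubular neighborhood $U_j$ of $\gamma_j$ and install a family of Lefschetz singularities using the standard complex model $f:\C^2 \to \C$, $(z_1,z_2) \mapsto z_1^2+z_2^2$; its regular fibers are symplectic cylinders and its singular fiber is a nodal union of two disks, matching the leafwise fiber topology of $X_0$. Position the model so that the family of singular points over $\gamma_j$ sits at fiber coordinate $p_1 = \ell_j$; parallel transport in $\om_0$ preserves $p_1$ exactly, so this embedding is unobstructed. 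When two loops $\gamma_{j_1}, \gamma_{j_2}$ intersect in $B$, the distinct-levels hypothesis $\ell_{j_1} \neq \ell_{j_2}$ ensures their singular points lie on disjoint sub-disks of any common singular fiber, so the surgeries can be carried out independently of each other.

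To equip the surgered manifold with a compatible symplectic form, employ the Gompf--Thurston interpolation trick. The local complex model carries its natural K\"ahler form $\om_{\on{loc}}$, which can be arranged to agree with $\om_F$ on the cylindrical fiber outside a small collar via a symplectomorphism of cylinders of equal volume. Interpolate between $\om_{\on{loc}}$ and $\om_0$ across this collar using a cutoff, producing a closed two-form $\om_X'$ that may a priori fail to be fiberwise non-degenerate near the collar; then $\om_X := \om_X' + N\pi^*\om_B$ is closed and, for $N$ sufficiently large, restricts to a symplectic form on each leaf of $\F_{ah}$. The construction is automatically standard on ends because all surgeries have support in a compact region of $B$.

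The boundary-leaf Dehn twists $k_\pm^{\on{new}}$ of the surgered fibration satisfy $k_+^{\on{new}} - k_-^{\on{new}} = \sum_j \wind(\gamma_j) = k_+ - k_-$ by \eqref{eq:wind}, so $m := k_\pm - k_\pm^{\on{new}}$ is a single integer independent of sign. Post-composing the identification $i$ of \eqref{eq:ends} at the $p_2 = +\infty$ end with a foliated automorphism of $(\R \times A) \times F$ that winds $m$ times fiberwise around both boundary circles $\partial_\pm A$ simultaneously shifts both $k_\pm^{\on{new}}$ by $m$, yielding the prescribed values without altering their difference or the singular data. The main obstacle in this plan is ensuring leafwise symplectic non-degeneracy of $\om_X$ after the interpolation across the collar; the Gompf--Thurston large-$N$ argument rescues this uniformly over the compact interpolation region, and closedness of $\om_X$ on the five-manifold $X$ is automatic because $\om_B$ is globally closed on the three-manifold $B$, not merely on leaves.
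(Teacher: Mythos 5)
There is a genuine gap at the very first step. You cannot ``install'' a Lefschetz singularity by a surgery supported in a tubular neighbourhood $U_j \cong \bS^1\times \bD^2$ of $\gamma_j$ inside the trivial fibration $B\times F$: the local model $z_1^2+z_2^2$ has monodromy a positive Dehn twist about the vanishing cycle around the meridian $\{pt\}\times\partial\bD^2$, whereas the trivial bundle you are excising has trivial monodromy there, so the two pieces cannot be glued along $\pi^{-1}(\partial U_j)$. The same obstruction shows up globally: if all modifications were supported in a compact region of the interior, the boundary leaves would remain trivialized, forcing $k_+^{\mathrm{new}}=k_-^{\mathrm{new}}=0$, which contradicts \eqref{eq:wind} whenever $\sum_j\wind(\gamma_j)\neq 0$. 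The paper's proof is organized precisely to avoid this: it first builds the bundle over the boundary leaves with the prescribed Dehn twists $k_\pm$, and then constructs the fibration over a large ball containing all singular values via the monodromy correspondence of Proposition \ref{prop:dat2fib}; the gluing to the complement is possible exactly because the total monodromy of the interior piece (a product of Dehn twists) matches the monodromy of the complement, which is where \eqref{eq:wind} is used as a hypothesis rather than derived after the fact. Your final paragraph, which tries to repair the boundary twists by re-choosing the end identification, cannot succeed either: $k_\pm$ are part of the combinatorial type, hence invariants of the fibration-with-identification (this is the content of Theorem \ref{thm:ah} and of Wendl's classification of $\bT^3$-fillings by the Luttinger constant); any fiberwise winding automorphism that acts nontrivially near one $p_2$-end and trivially elsewhere must interpolate over a region where it introduces cross terms of the form $dp_1\wedge dp_2$ and so fails to be a foliated symplectomorphism of the standard end structure.

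The second gap is in the inflation step. Adding $N\pi^*\om_B$ is the standard Gompf--Thurston move for compact fibrations, but here it is not compactly supported: on the ends it replaces the leafwise form $dq_2\wedge dp_2+dq_1\wedge dp_1$ by $(1+N)\,dq_2\wedge dp_2+dq_1\wedge dp_1$, which violates the standardness condition \eqref{eq:ends} (the identification $i$ is required to lift the identity on the base, so the factor $1+N$ cannot be absorbed). The paper explicitly flags this (``Unlike the compact case, we cannot add a large multiple of a symplectic form on the base'') and substitutes a \emph{compactly supported} exact inflation term $C\,dq_1\wedge d(\eta_2(p_2)f(p_1))$, with the cutoffs $\eta_1,\eta_2$ and the constants $c_1,c_2$ tuned so that the correction is large enough to restore leafwise non-degeneracy on the compact core while being too small to destroy it outside. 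Your positioning of the singular circles at fiber level $p_1=\ell_j$ does capture the level data correctly in spirit, and the observation that intersecting loops must have distinct levels is used in the right way; but without (i) a construction of the underlying fibration that respects the monodromy constraints and (ii) a compactly supported inflation, the proof does not go through.
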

\begin{proof}
  By Proposition \ref{prop:sm} \eqref{part:sm1}, there is a smooth fibration $\pi : X \to B$ corresponding to the given
  data, that is equipped with a diffeomorphic identification of
  non-compact ends as in \eqref{eq:ends}. % We start with the trivial
  % $F$-bundles on $\{|p_1|>R\}$, for a large $R$. On the boundary
  % $\R \times \partial_\pm A^2$, we glue in the bundle
  % $[-R,R] \times \bS^1 \times F$, where the gluing is trivial along
  % $\{-R\} \times \bS^1$, and with $k_\pm$-Dehn twist at
  % $\{R\} \times \bS^1$. The side boundaries can be thickened so that
  % the bundle is now defined on $(\R \times A^2)\bs (\bS^1 \times
  % \mO)$. Here $\mO \subset \R^2$ is a large ball, $\bS^1 \times \mO$
  % has the product foliation, it does not intersect the boundary of
  % $\R \times A^2$, and it contains all the singular points of the
  % Lefschetz data. By a parametric version of Proposition
  % \ref{prop:dat2fib}, the fibration $X|_{\bS^1 \times \mO}$ can be
  % constructed as prescribed by the data. Finally this can be glued in
  % to the fibration on the complement
  % $(\R \times A^2)\bs (\bS^1 \times \mO)$, because the monodromies
  % match.

  We adapt Gompf's method \cite[Theorem 10.2.18]{Gompf_book} to
  construct a foliated strong symplectic form. In that method, one needs a
  closed two-form which represents the right cohomology class on
  fibers. In our set-up this closed two-form $\zeta \in \Om^2(X)$ is
  required to satisfy the following conditions:
  \begin{enumerate}
  \item On any fiber, the form $\zeta-i^*\om_0$ vanishes outside a
    compact set, where $\om_0$ is the standard symplectic form on the trivial $F$-bundle on the ends \eqref{eq:ends}.  
    % \item On a fiber $F_b$ that is contained in $X\bs K$ (and hence
    %   a
    %   regular fiber), the form $\zeta - i^*\om_0$ integrates to
    %   zero.
  \item For any regular fiber $F$, there is a constant $a_0$ such that
    for all $a \geq a_0$,
    \begin{equation*}
      \int_{F \cap \{|p_1| \leq a\}}\zeta=2a.
    \end{equation*}
  \item\label{part:sing} On a singular fiber $F$ with singularities at
    levels $l_1<\dots<l_k$ and components $F_0,\dots,F_k$ (here $F_i$
    is compact if $1 \leq i \leq k-1$), we require
    \begin{equation*}
      \begin{split}
        \int_{F_i}\zeta = l_i - l_{i-1}, \quad 1 \leq i \leq k-1,\\
        \int_{F_0 \cap \{p_1 \geq -a\}}\zeta=l_1+a, \quad \int_{F_k
          \cap \{p_1 \leq a\}}\zeta=a-l_k,
      \end{split}
    \end{equation*}
    for any sufficiently large $a>0$.
  \end{enumerate}
  Let $F_K \subset F$ be a compact subset such that
  $\R \times A^2 \times (F \bs F_K)$ is contained in
  $(\R \times A^2 \times F)\bs K_0$. Choose $\zeta_F \in \Om^2(F)$
  satisfying the following conditions:
  \begin{enumerate*}
  \item $\zeta_F=0$ on $F_K$,
  \item $\om_F-\zeta_F$ is compactly supported and
    $\int_F \om_F-\zeta_F=0$.
  \end{enumerate*}
  The two-form $\zeta_{pre}:=i^*\pi_2^*\zeta_F$ satisfies the first
  two necessary conditions mentioned above. We add a compactly
  supported exact form to $\zeta_{pre}$ in order to satisfy the third
  condition. We demonstrate this in case $\gamma$ is a loop of
  singular points in $X$, and each of the singular fibers contains
  only one singularity at level $l$. Let $\gamma_B:=\pi(\gamma)$ be
  the projection of the loop, and let
  $\Op(\gamma_B) \subset \R \times A^2$ be a neighbourhood. Let
  $A_l:=B_{2\eps}(l) \bs B_\eps(l) \subset \R$ be a small annulus
  centered at $l$. The fibration
  $\pi^{-1}(\Op(\gamma_B)) \cap \{p_1 \in A_l\}$ is trivial, and is
  diffeomorphic to $\Op(\gamma_B) \times (A_l \times \bS^1)$. There
  exists an exact two-form $d\eta_F$ compactly supported in
  $A_l \times \bS^1$ such that $\zeta_{pre} + d\eta_F$ satisfies the
  condition \eqref{part:sing} for the singular fibers in
  $\gamma$. Next, define a cut-off function
  $\eta_B:\Op(\gamma_B) \to [0,1]$ that is $1$ in a neighbourhood of
  $\gamma_B$. The form $d(\eta_B\eta_F)$ extends by zero to all of
  $X$. By applying similar adjustments for all loops of singular
  points, we obtain the required two-form $\zeta$.

  Next, we describe a covering $\cup_\alpha U_\alpha$ of the base
  $\R \times A^2$, and a symplectic form $\om_\alpha$ on
  $\pi^{-1}(U_\alpha)$.  \setlist[description]{font=\normalfont\space}
  \begin{description}
  \item [Type 1] Two of the sets in the covering are
    $\{ \pm p_2>R\} \subset \R \times A^2$. On these, the symplectic
    form is just $i^*\om_0$.
  \item [Type 2] Amoros et al. (\cite{Amoros}) constructs a symplectic form
    in a small neighbourhood of a singular fiber of a Lefschetz
    fibration. This procedure can be replicated in a family
    (parametrized by an interval) to yield symplectic forms on
    neighbourhoods of singular fibers that agree with $i^*\om_0$ at
    the ends of the fibers.
  \item [Type 3] The rest of the base space can be covered by
    contractible open sets. On these sets the trivialization on the
    ends of the fiber \eqref{eq:ends} can be extended to a
    trivialization of the whole fiber. Thus on each of the sets, we
    can produce a foliated symplectic form agreeing with $i^*\om_0$ on
    the ends.
  \end{description}

  Next, we glue the forms together. For this, we show that in every
  chart $U_\alpha$, there is a compactly supported one-form
  $\eta_\alpha \in \Om^1_c(\pi^{-1}(U_\alpha))$ such that
  $\om_\alpha-\zeta=d\eta_\alpha$ on $\pi^{-1}(U_\alpha)$. Here we
  assume $U_\alpha$ is closed so that the `compact support' condition
  only concerns the fiber direction. Firstly, observe that the forms
  $\om_\alpha-\zeta$ are compactly supported by construction. If
  $U_\alpha$ is of type 1, then, $\om_\alpha-\zeta$ is the pullback of
  the form $\om_F - \zeta_F \in \Om^2_c(F)$. Since this form
  integrates to zero, it is zero in $H^2_c(F)$. Therefore, there is a
  one-form $\eta_F \in \Om^1_c(F)$ such that $d\eta_F=\om_F-\zeta_F$
  which can be pulled back to $\pi^{-1}(U_\alpha)$. If $U_\alpha$ is
  of type 2, we observe that $\pi^{-1}(U_\alpha)$ is contractible. By
  Poincar\'{e} duality $H^2_c(\pi^{-1}(U_\alpha))=0$, and so,
  $\om_\alpha-\zeta$ has a primitive.  If $U_\alpha$ is of type 3, we
  claim that $[\om_\alpha-\zeta]=0$ in
  $H^2_c(\pi^{-1}(U_\alpha)$. This is because the forms integrate to
  zero on the fiber, and since $U_\alpha$ is compact and contractible,
  $H^2_c(F \times U_\alpha) \simeq H^2_c(F) \tensor H^0(U_\alpha)
  \simeq H^2_c(F)$.  With the primitives in hand, we define
$$\om_{pre}:=\zeta+\ssum_\alpha d(\rho_\alpha \eta_\alpha),$$
where $\eta_\alpha:\R \times A^2 \to [0,1]$ is a partition of unity.
This form agrees with $i^*\om$ outside a compact set, and it is
symplectic on fibers. Indeed, it is equal to $\om_\alpha$ on fibers as
$d\rho_\alpha$ vanishes in the fiber direction.

It remains to modify the two-form $\om_{pre}$ to make it
non-degenerate in the base direction. Unlike the compact case, we
cannot add a large multiple of a symplectic form on the base. Instead,
we carry out an {\em inflation} argument, wherein we add a base area
form where needed and adjust it elsewhere so that the form is not
disturbed outside a compact set.  We assume that the compact set $K$
(from \eqref{eq:ends}) is $\{|p_1| \leq R\} \cap \{|p_2| \leq R\}$.
Recall that $\om_B=dq_2 \wedge dp_2$. Let $\eta_1:\R \to [-c_1,1]$ be
a compactly supported function that is $1$ on $[-R,R]$, and
$\int_\R \eta_1(s)ds=0$. The constant $c_1>0$ is small and is to be
determined. Then, there is a compactly supported function $f:\R\to \R$
such that $df=\eta_1(p_1)dp_1$.  Secondly, define a compactly
supported cut-off function $\eta_2:\R \to [0,1]$ that is $1$ on
$[-R,R]$, and $|\eta_2'|<c_2$ where $c_2$ is a small constant that
will be determined later.
% The function $\ol \eta_2:=\eta_2 \circ p_2:X \bs K \to \R$ extends
% to all of $X$ by defining $\ol \eta_2=1$ on $K$.
Define a form
\begin{equation}
  \label{eq:inflate}
  \om:=\om_{pre}+Cdq_1 \wedge d(\eta_2(p_2) f(p_1))=\om_{pre}+Cdq_1 \wedge (fd\eta_2 + \eta_1(p_1)\eta_2(p_2)dp_1).
\end{equation}
The form $\om-\om_{pre}$ is compactly supported. We now determine the
constants $C$, $c_1$ and $c_2$ so that $\om$ is symplectic.  The
constant $C$ is chosen large enough so that $\om_{pre}+C\om_B$ is
symplectic on the compact set $\{|p_1| \leq R, |p_2|\leq R\}$. This
can be done because $\om_{pre}$ is symplectic in the fiber direction.
The constants $c_1$, $c_2$ are chosen so as not to disturb the
non-degeneracy of the standard symplectic form on $X\bs K$. In
particular, we choose $c_1<\frac 1 {10C}$ to control the last term in
\eqref{eq:inflate}. This fixes $f$. Then, we choose
$c_2<1/(10C\Mod{f}_{L^\infty})$ to control the second to last term in
\eqref{eq:inflate}.
\end{proof}
\begin{remark}\label{rem:existfamily}{\rm(Parametric existence
    result)} For a $[0,1]$-family of symplectic Lefschetz data, there
  is a family $\{X_t\}_{t \in [0,1]}$ of Lefschetz fibrations, where
  each element of the family is standard in the non-compact ends.  The
  proof is the same as the proof of Proposition \ref{prop:exist}.
\end{remark}
\begin{proposition}\label{prop:unique}
  {\rm(Uniqueness of symplectic form on Lefschetz fibrations)} Suppose
  $\pi_k:(X_k,\om_k) \to (\R \times A^2,\F_{ah})$ ($k=0,1$) are simple
  strong symplectic foliated Lefschetz fibrations with fiber $F=\R \times \bS^1$ and
  identification of ends $i_k$, and of the same combinatorial
  type. Then, they are strong symplectic deformation equivalent. That is,
  there is a diffeomorphism $\phi:X_0 \to X_1$ that satisfies
  \begin{enumerate}
  \item $i_1\circ \phi = i_0$ wherever $i_0$ and $i_1$ are defined.
  \item There is a family of strong symplectic forms
    $\{\ol \om_t: t \in [0,1]\}$ on $X_0$ such that $\ol \om_0=\om_0$,
    $\ol \om_1=\phi^*\om_1$, and $\ol \om_t|_\F$ is $t$-independent
    outside a compact subset of $X_0$.
  \end{enumerate}
\end{proposition}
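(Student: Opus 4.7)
The plan is to combine the homotopy of symplectic Lefschetz data from Proposition \ref{prop:homo-sympdata} with the parametric existence of Remark \ref{rem:existfamily}, trivialize the resulting smooth family of Lefschetz fibrations, and then apply a Moser-type reconciliation at the endpoints. First, let $\Phi_0,\Phi_1$ denote the symplectic Lefschetz data of $(X_0,\om_0)$ and $(X_1,\om_1)$. Since the two fibrations have the same combinatorial type, Proposition \ref{prop:homo-sympdata} produces a path $\{\Phi_t\}_{t\in[0,1]}$ of symplectic Lefschetz data joining $\Phi_0$ to $\Phi_1$, and Remark \ref{rem:existfamily} yields a smooth family $\{(\tilde X_t,\tilde\om_t)\}_{t\in[0,1]}$ of symplectic Lefschetz fibrations over $B$ that realize $\Phi_t$ and are standard on ends.

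Second, I would trivialize this family via an Ehresmann-type argument adapted to Lefschetz fibrations. On the total space $\bigsqcup_t\{t\}\times\tilde X_t$, choose any smooth connection that preserves the fibration structure (mapping regular fibers to regular fibers and critical loci to critical loci) and that is trivial on the ends in the identification \eqref{eq:ends}. Integrating the lift of $\partial_t$ yields fibration-preserving diffeomorphisms $\Psi_t:\tilde X_0\to\tilde X_t$ that agree with the identity on the standard ends. The pullbacks $\tilde\om'_t:=\Psi_t^*\tilde\om_t$ then form a path of symplectic forms on $\tilde X_0$ whose restriction to $\F$ is constant outside a compact subset of $\tilde X_0$.

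Finally, I must reconcile $(X_k,\om_k)$ with $(\tilde X_k,\tilde\om_k)$ at $k=0,1$, both of which realize the same symplectic Lefschetz datum $\Phi_k$. A parametric version of Proposition \ref{prop:dat2fib} identifies the underlying smooth Lefschetz fibrations by a fibration-preserving diffeomorphism matching the end identifications $i_k$. Once these identifications are made, we have two symplectic forms on the same smooth Lefschetz fibration that agree leafwise on ends, that assign the same symplectic area to every irreducible component of every fiber, and that are both compatible with the Lefschetz structure. Their difference is leafwise exact with compactly supported primitives, so the cohomological construction used in Proposition \ref{prop:exist}—choose fiber primitives, linearly interpolate, and then reinflate in the base direction by a term $C\,dq_1\wedge d(\eta_2(p_2)f(p_1))$—produces a path of compatible symplectic forms between them that remains $\F$-constant outside a compact set. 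Concatenating the three segments and composing the three diffeomorphisms furnishes the required $\phi$ and $\ol\om_t$. The main obstacle is this last Moser step: one must maintain simultaneous non-degeneracy on fibers and on the base throughout the interpolation while preserving the leafwise behaviour on ends, which requires choosing the primitives, their supports, and the inflation constants of Proposition \ref{prop:exist} uniformly along the deformation parameter.
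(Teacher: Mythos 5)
Your proposal is correct and follows essentially the same route as the paper: reduce to the case of identical symplectic Lefschetz data via Proposition \ref{prop:homo-sympdata} and the parametric existence result, trivialize the resulting family by fibration-preserving diffeomorphisms respecting the ends, and then reconcile the endpoints by interpolating the forms and reinflating with the compactly supported term $C\,dq_1\wedge d(\eta_2(p_2)f(p_1))$ from Proposition \ref{prop:exist}. The only cosmetic difference is that the paper first applies a parametric Moser argument (using the matching of levels on singular fibers) to make the identifying diffeomorphism a fiberwise symplectomorphism before interpolating, whereas you interpolate directly; both land on the same inflation step.
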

\begin{proof}
  We first prove the result assuming that $X_0$ and $X_1$ have the
  same symplectic Lefschetz data. Then, by Proposition \ref{prop:sm}
  \eqref{part:sm2}, there is a diffeomorphism $\phi:X_0 \to X_1$
  satisfying the first condition in the proposition and
  $\pi_1 \circ \phi=\pi_0$. By a parametric Moser argument
  \cite[Theorem 7.4]{lsg}, we can deform $\phi$, so that it is
  additionally a symplectomorphism on fibers. We remark that to apply
  the Moser argument on singular fibers, we use the fact that the
  levels of the singular values match in $X_0$ and $X_1$. For any
  $t \in [0,1]$, the two-form $(1-t)\om_0 + t\phi^*\om_1$ is
  non-degenerate in the fiber direction, but may not be non-degenerate
  in the leaf tangent space $T\F$. For a large enough constant $C>0$,
  \begin{equation}\label{eq:Cdef}
    (1-t)\om_0 + t\phi^*\om_1 + C\om_B
  \end{equation}
  is strong symplectic for all $t \in [0,1]$.
  In the absence of the requirement that the two-form is standard on the ends, 
  the path of symplectic forms, obtained by taking convex combination at each of the 3 steps
  \[\om_0 \to \om_0 + C \om_B \to \phi^*\om_1 + C \om_B \to \phi^*\om_1\]
  gives a strong symplectic deformation equivalence between $\om_0$ and $\phi^*\om_1$. However, since we need the deformation to be via a path of strong symplectic forms that is standard on the ends, we  use an inflation argument as follows: 
  We choose constants
  $c_1$, $c_2$ and functions $\eta_1$, $\eta_2$, $f:\R \to \R$ as in
  the proof of Proposition \ref{prop:exist} so that
  \begin{equation}\label{eq:path1}
    (1-t)\om_0 + t\phi^*\om_1 + Cdq_1 \wedge d(f(p_1)\eta_2(p_2))
  \end{equation}
  is strong symplectic for all $t \in [0,1]$, and $C$ is from \eqref{eq:Cdef}. 
  Further, we observe that the
  forms
  \begin{equation}\label{eq:path2}
    \om_0 + tdq_1 \wedge d(f(p_1)\eta_2(p_2)), \quad \phi^*\om_1 + tdq_1 \wedge d(f(p_1)\eta_2(p_2))  
  \end{equation}
  are strong symplectic for all $t \in [0,C]$. Thus, we have produced a path
  of strong symplectic forms connecting $\om_0$ and $\phi^*\om_1$.

  Next, we consider the case that $X_0$ and $X_1$ have the same
  combinatorial type, but not the same symplectic Lefschetz data. 
  By Proposition \ref{prop:homo-sympdata}, there
  is a $[0,1]$-family $\Phi_t$ of symplectic Lefschetz data whose
  end-points are the data of $X_0$ and $X_1$.  By the parametric
  version of the existence result (see Remark \ref{rem:existfamily}),
  there is a family $(X_t',\om'_t) \to B$ of symplectic Lefschetz
  fibrations that are standard on ends whose Lefschetz data is
  $\{\Phi_t\}_t$.  There is a family of diffeomorphisms
  $\phi_t':X_0' \to X_t'$ that respect the identification in the
  ends. The family of symplectic forms $(\phi_t')^*\om_t'$ on $X_0'$
  provides a symplectic deformation equivalence between
  $(X_0',\om_0')$ and $(X_1',\om_1')$.  By the previous two
  paragraphs, for $i=0, 1$, the Lefschetz fibrations $(X_i,\om_i)$ and
  $(X_i',\om_i')$ are strong symplectic deformation equivalent. Composing the
  three equivalences, the proposition is proved.
\end{proof}

\begin{proof}
  [Proof of Theorem \ref{thm:ah}] Suppose $W$ is a strong symplectic foliated filling of
  $\bS FT^*(\bS^1 \times (A^2,\F_{ah}))$, and let $W^\infty$ be the
  manifold obtained by attaching infinite ends.  There is a Lefschetz
  fibration $W^\infty \to \R \times (A^2,\F_{ah})$ -- this is a
  consequence of Propositions \ref{prop:folcylend},
  \ref{prop:moduliann}, \ref{prop:nodein} and
  \ref{prop:endstd}. Lefschetz fibrations with a standard structure on
  ends are classified up to symplectic deformation equivalence -- this
  is a consequence of Propositions \ref{prop:exist} and
  \ref{prop:unique}. The case of the weak symplectic foliated filling is proved in a similar way and is discussed below in Remark \ref{rem:wsf}.
\end{proof}

\begin{remark} \label{rem:wsf}
  {\rm(On weak symplectic foliations)} Given a combinatorial datum for a
  weak symplectic foliated Lefschetz fibration, the proof of the
  existence and uniqueness (up to symplectic deformation equivalence)
  of a weak symplectic foliated Lefschetz fibration $\pi:X \to B$
  carries over verbatim from the strong case. Therefore the weak symplectic version
  of Theorem \ref{thm:ah} holds. 
  The only modification
  occurs in the proof of Proposition \ref{prop:exist} in the
  construction of a two-form $\om_\alpha$ on an open
  $\pi^{-1}(U_\alpha)$ containing a 
  family of singular values ($U_\alpha$ is thus a Type 2 open set). The construction of Amoros et
  al. \cite{Amoros} can be carried out in a family, even when the
  level is not constant on a connected component of singular points.
\end{remark}

\section{Fillings in the Reeb case}

In this section we prove Theorem \ref{thm:reeb}, which says that the
only strong symplectic foliated filling of the sphere cotangent bundle of the Reeb
foliation on $\bS^3$ is the foliated disk cotangent bundle.

\subsection{Compact leaf bounds a compact leaf}\label{subsec:cptleaf}

The first step in analyzing a filling of the foliated sphere cotangent
bundle in the Reeb case is to show that the filling of the compact
leaf is compact.  As part of Theorem \ref{thm:cptleaf}, we prove a
broader result wherein we allow the compact leaf in the contact
foliation to be $\bS^3$, $\R\P^3$, $\bT^3$ or any Lens space. Note
that the result also holds for weak symplectic foliated fillings.

The proof of Theorem \ref{thm:cptleaf} requires a monotonicity result,
where we need the following notion of tameness.  Let $(M,g)$ be a
Riemannian manifold, and let $K \subset M$ be a closed subset
containing the boundary $\partial M$. The complement $M \bs K$ is {\em
  tamed} if there is a positive lower bound $\inj$ on the injectivity
radius for points in $M \bs K$, and there is a constant $0<r_0 < \inj$
such that there is a uniform bound on the differential of the
exponential map and its inverse on $B_{r_0}(m)$ for all
$m \in M \bs K$. 

\begin{proposition}
  \label{prop:bddia} {\rm(Bound on the diameter in terms of area
    \cite[Proposition 4.4.1]{Aud:JH})}
  Suppose $(W,\om,J)$ is a symplectic manifold with a compatible
  almost complex structure. Let $K \subset W$ be a closed set
  containing $\partial W$ such that the complement $W \bs K$ with
  metric $\om(\cdot, J \cdot)$ is tamed in the sense defined in
  preceding paragraph.  Then, there is a constant $C>0$ for which the
  following is satisfied: For any compact set $K \subset W$, and any
  compact connected $J$-holomorphic curve $f:\Sig \to W$ whose image
  intersects $K$ and $f(\partial \Sig) \subset K$, the image of $f$ is
  contained in a neighbourhood of $K$ of size $C(1+\int_\Sig f^*\om)$.
\end{proposition}

% \begin{remark}
%   The above Proposition holds on a (weak or strong) symplectic foliation $(W,\F,\om,J)$ where $J$ is a compatible almost complex structure on the leaves 
% \end{remark}

\begin{proof}[Proof of Theorem \ref{thm:cptleaf}]
  For each of the three-manifolds in this theorem, the filling is
  foliated by holomorphic curves. See Eliashberg
  \cite{Eliash:holdiscs} for $\bS^3$, Hind \cite{Hind:rp3} for
  $\R\P^3$, Hind \cite{Hind:lens} for Lens spaces $L(p,1)$ and Wendl
  \cite{Wendl} for $\bT^3$.  The holomorphic curves in the filling
  intersect the boundary $L_M$, and have a bound on symplectic area by
  Proposition \ref{prop:energyhomology}. The result is proved by
  showing that given an energy bound, the image of the curve is
  contained in a fixed diameter of the boundary. This is a consequence
  of applying the monotonicity result Proposition \ref{prop:bddia} to
  the leaf $L$, with $K$ be a closed tubular neighborhood of $\partial W=M$. 
  The result is indeed applicable on $L$
  because the tameness conditions follow from the compactness of the filling $W$. 
\end{proof}

\subsection{A cohomological invariant for fillings of
  $\bT^3$} \label{subsec:cohinv}

In this section, we prove Theorem \ref{thm:reeb} though the proof of the main technical lemma is postponed to Section \ref{subsec:surgery}. In this section, as part of the proof of Theorem \ref{thm:reeb}, we carry out two preparatory steps,
each of which involves a deformation of the strong symplectic form on the filling. The first of these deformations ensures that the identification of the cylindrical end to the symplectization is an equivalence of two forms, and not just a symplectomorphism on leaves. The second of these deformations ensures that the closed leaf is symplectomorphic to $T^*\bT^2$ with the canonical symplectic form. The second deformation involves changing a cohomological invariant of the filling, which we  explain in the following remark.

% We discuss a cohomological invariant
% of fillings of $\bT^3$, which is also an invariant for some strong symplectic foliated
% fillings. Using this invariant, Wendl's result on the filling of
% $\bT^3$ can be strengthened from a symplectic deformation equivalence to a symplectomorphism.
% We use this observation to prove Theorem \ref{thm:reeb}. 

% , see Proposition \ref{prop:sympt3}. The
% stronger version of the result is necessary for the proof of the
% result on fillings in the Reeb case. Using this invariant, we also
% state Theorem \ref{thm:reebstrong} which is a stronger version of
% Theorem \ref{thm:reeb} in the introduction, regarding foliated
% fillings in the Reeb case.

\begin{remark} \label{rem:motiv} {\rm(Motivation)} Suppose $W$ is a
  filling of $\bT^3$ and there is a diffeomorphism
  \begin{equation*}
    \psi:(T^*\bT^2,\om_\can) \to (W^\infty,\om) 
  \end{equation*}
  that is standard on the ends, and such that the path
  $\{t\psi^*\om + (1-t)\om_\can:t \in [0,1]\}$ is a symplectic
  deformation equivalence.  Here, $\om_\can$ is the canonical
  symplectic form on $T^*\bT^2$.  The map $\psi$ can not be deformed
  to a symplectomorphism relative to ends if the compactly supported cohomology class
  $[\psi^*\om - \om_\can] \in H^2_c(T^*\bT^2)$ is non-trivial. By the
  Poincar\'{e} lemma,
$$H^2_c(T^*\bT^2,\R) \simeq H^0(\bT^2,\R)\simeq \R.$$
For any $\kappa \in \R$, viewed as a class in $H^2_c(T^*\bT^2,\R)$, we
construct a representative as follows.  Let
$(p_1,p_2): T^*\bT^2 \to \R^2$ be the coordinates on the cotangent
fibers, and let $\beta \in \Om^2_c(\R^2)$ be a compactly supported form
that integrates to $1$. Then,
\begin{equation}
  \label{eq:omk}
  \om_\kappa:=\om_\can + \kappa \beta(p_1,p_2) 
\end{equation}
is a symplectic form on $T^*\bT^2$. There exists $\kappa \in \R$ such
that $[\psi^*\om - \om_\kappa]=0$ in $H^2_c(T^*\bT^2)$, and
$(\psi^*\om - \om_\kappa)$ has a compactly supported
primitive. Furthermore, if the support of $\beta$ is chosen to be in a
region where $\psi^*\om=\om_\can$, then
$\{t\psi^*\om + (1-t)\om_\kappa:t \in [0,1]\}$ is a path of symplectic
forms. Therefore, by Moser's theorem \cite[Theorem 7.4]{lsg}, the map $\psi$ can be deformed
to a symplectomorphism. Thus $\kappa$ is a cohomological invariant of
the filling $W$. This ends the remark. 
\end{remark}

The form $\beta$ defined in the remark above is called a {\em cotangent
  fiber form}: It is a closed two-form on an extended filling $W^\infty$ whose
support is compact and lies in the cylindrical ends, and which
integrates to $1$ on the cotangent fiber.  In general, the cotangent
fiber form can be defined on any extended filling where the coordinates
$(p_1,p_2)$ are well-defined on the ends. For example, for a 
foliated three-manifold $M$ whose foliated cotangent bundle $FT^*M$ is
a trivial $\R^2$-bundle, the cotangent fiber form is
well-defined on any extended strong symplectic foliated filling of the
unit cotangent bundle $\bS(FT^*M)$.

\begin{proof}
  [Proof of Theorem \ref{thm:reeb}] Let $W^\infty$ be an extension of
  a foliated filling $W$ of the sphere cotangent bundle
  $\bS(FT^*\bS^3_{Reeb})$ of the Reeb foliation on the $3$-sphere,
  which means that there is a foliated symplectomorphism
  \begin{equation}
    \label{eq:endsr}
    i: ( [0,\infty) \times \bS(FT^*\bS^3_{Reeb}), \om_{can}) \to (W^\infty \bs W,\om),
  \end{equation}
  %
  % where $K$ is a compact set.
  We will prove the existence of a strong symplectic foliated
  deformation equivalence
  \[\phi : FT^*\bS^3_{Reeb} \to W^\infty\]
  that is equal to $i$ in the complement of a compact subset.

  First, we modify the strong symplectic form on the extended filling
  $W^\infty$ by a symplectic deformation equivalence, so that the
  identification \eqref{eq:endsr} on the ends is an equivalence of
  two-forms.  We point out that, at the outset, $i^*\om$ equals
  $\om_\can$ only on the leaf tangent space, and the forms are not
  equal. The cohomology class $[i^*\om - \om_\can]$ vanishes since
  $\bS(FT^*\bS^3_{Reeb})$ is diffeomorphic to $\bS^3 \times \bS^1$,
  and therefore, $i^*\om - \om_\can=d\lam$ for a one-form $\lam$ on
  $[0,\infty) \times \bS(FT^*\bS^3_{Reeb})$. Let
  $\eta : [0,\infty) \to [0,1]$ be a cut-off function that is $0$ in
  the neighborhood of $\{0\}$, and $1$ outside a compact set. If the
  derivative of $\eta$ is small enough,
  $\om':=\om - (i^{-1})^*d(\eta \lam) \in \Om^2(W^\infty)$ is a
  symplectic form on $\F$. Consequently
  $\om - \tau (i^{-1})^*d(\eta \lam) \in \Om^2(W^\infty)$ is a strong
  symplectic form for all $\tau \in [0,1]$, and $\om'=i^*\om_\can$ on
  $\{\eta=1\}$.  From now on, we replace $\om$ by $\om'$, so we can
  assume that $i$ preserves the strong symplectic form outside a
  compact subset.

  We will perform another modification of $(W^\infty,\om)$ by a
  symplectic deformation equivalence, and show that for the resulting
  two-form, the extended filling of the compact leaf is
  symplectomorphic to $T^*\bT^2$.  By Theorem \ref{thm:cptleaf}, the
  filling $L \subset W$ of the compact leaf
  $L_M \subset \bS(FT^*\bS^3_{Reeb})$ is compact since
  $L_M=\bS(T^*\bT^2)$.  By Proposition \ref{prop:lutt}, there are
  Luttinger parameters $\sig:=(k_1,k_2) \in \Z^2$ and a diffeomorphism
  \begin{equation}
    \label{eq:tt2}
    \phi: (\Lutt_{(k_1,k_2)}(T^*\bT^2),\om_\sig) \to (L^\infty,\om)   
  \end{equation}
  that is equal to \eqref{eq:endsr} on
  $[R,\infty) \times \bS(T^*\bT^2)$ for some $R$, and such that
  $(1-t)\phi^*\om + t \om_\sig$ is a symplectic form for any
  $t \in [0,1]$.  The Luttinger parameter $\sig$ is $(0,0)$ for the
  following reason: The leaf $L_M$ partitions $\bS(FT^*\bS^3_{Reeb})$
  into two copies $M_1$, $M_2$, each of which is contactomorphic to
  $M:=\bS (FT^*X_{Reeb})$. Applying the reasoning in the proof of
  Proposition \ref{prop:luttah} \eqref{part:kb} on the component
  $M_1$, we can conclude that $k_2=0$. By applying the same reasoning
  on $M_2$, we get $k_1=0$. Therefore, \eqref{eq:tt2} can be rewritten
  as
  \begin{equation}
    \label{eq:tt2-re}
    \phi: (T^*\bT^2,\om_\can) \to (L^\infty,\om)   
  \end{equation}
  By Remark \ref{rem:motiv}, there is cotangent fiber form
  $\beta \in \Om^2_c(W^\infty)$ whose support is compact and contained
  in $[R,\infty) \times \bS(FT^*\bS^3_{Reeb})$, and a constant
  $\kappa \in \R$, such that $(\om - \kappa \beta)$ is a strong
  symplectic form on $W^\infty$ and
  $[\phi^*(\om-\kappa \beta) - \om_\can]=0$ in $H^2_c(T^*\bT^2)$.  We
  also have that for any $t \in [0,1]$,
  $(1-t)\phi^*(\om-\kappa \beta) + t\om_\can$ is a symplectic form on
  $T^*\bT^2$ that is $t$-independent outside a compact subset. By
  Moser's theorem \cite[Theorem 7.4]{lsg},
  $\phi$ can be deformed via an isotopy so that
  \begin{equation}
    \label{eq:tt2-re2}
    \phi: (T^*\bT^2,\om_\can) \to (L^\infty,\om - \kappa \beta)   
  \end{equation}   
  is a symplectomorphism and $\phi \equiv i$ on
  $[R,\infty) \times \bS(T^*\bT^2)$. For the rest of the proof we
  replace $\om$ by $\om - \kappa \beta$ since the latter is a strong
  symplectic form on $W^\infty$ that is deformation equivalent to
  $\om$.

  We now have an identification $i$ (from \eqref{eq:endsr}) on ends
  that is an equivalence of strong symplectic forms, and a
  symplectomorphic extension $\phi$ of $i$ to the closed leaf
  $T^*\bT^2 \subset FT^*\bS^3_{Reeb}$. By Proposition
  \ref{prop:reebexact} stated below, the map $\phi$ extends
  to a foliated symplectomorphism from the foliated cotangent of
  $\bS^3_{Reeb}$ to $W^\infty$, finishing the proof of Theorem
  \ref{thm:reeb}.
\end{proof}

The proof of Theorem \ref{thm:reeb} used the following result which is proved in the next section. 
\begin{proposition}\label{prop:reebexact}
  Suppose $W$ is a strong symplectic foliated filling of $M=\\ \bS(FT^*(\bS^3,\F_{Reeb}))$ with extended filling $(W^\infty,\om)$ and an identification of non-compact ends
  \[i : ([R,\infty) \times M, \om_\can) \to (W^\infty\bs K, \om) \]
  for some $R \geq 0$ and a compact set $K \subset W^\infty$, such that $i^*\om=\om_\can$. 
    Suppose that the compact leaf  $L_M \simeq \bS T^*\bT^2 = \bT^3$  bounds a compact leaf $L$ in $W$ with extension  $L^\infty \subset W^\infty$, and that there is a symplectomorphism
  \begin{equation}\label{eq:cptleafsymp}
  \phi:(T^*\bT^2,\om_\can) \to (L^\infty,\om)
  \end{equation}
  that is equal to $i$ on $L_M \times [R,\infty)$.
  Then, $\phi$ extends to a foliated
  symplectomorphism
  \begin{equation*}
    \phi:(FT^*(\bS^3,\F_{Reeb}),\om_\can) \to W^\infty 
  \end{equation*}
  that extends the identification of the non-compact ends.
\end{proposition}

\subsection{A surgery operation}\label{subsec:surgery}
In this section, we prove 
Proposition \ref{prop:reebexact} which shows the
 uniqueness of strong symplectic fillings in the Reeb case.
First (in Remark \ref{rem:surgeryx})  we describe a surgery operation
 on a foliated $3$-manifold, which when applied to a Reeb
component, transforms it  to a $3$-manifold with almost horizontal foliation. This
surgery is used at the level of foliated cotangent bundles to prove
Proposition \ref{prop:reebexact}. The surgery operation requires the Reeb filling to be a strong symplectic foliation.

\begin{figure} 
  \centering 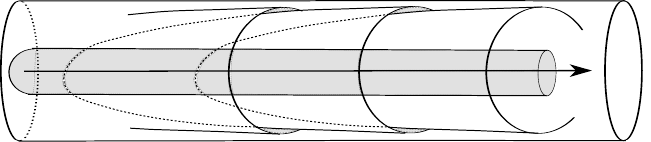
  \caption{The Reeb foliation in a part of the torus, with transversal
    $\gamma$, and a neighbourhood $U_{Reeb}$}
  \label{fig:ureeb}
\end{figure}

\begin{remark} \label{rem:surgeryx} {\rm(From Reeb to almost
    horizontal)} Let $X_{Reeb}:=(\bS^1 \times \bD^2, \F_{Reeb})$ be
  the solid torus with the Reeb foliation. Let
  $\gamma:\bS^1 \to X_{Reeb}$ be a closed transversal that intersects
  every non-compact leaf exactly once. The thickening of the
  transversal, denoted by $U_{Reeb} \simeq \bS^1 \times \bD^2$, is
  equipped with the product foliation, see Figure \ref{fig:ureeb}. Let
  $U_{out}$ be an open set that deformation retracts to
  $X_{Reeb}\bs \ol U_{Reeb}$, and such that
  $U_{Reeb} \cup U_{out}=X_{Reeb}$ as in Figure \ref{fig:surgery}.

  In the almost horizontally foliated manifold
  $X_{ah}:=(A^2 \times \bS^1,\F_{ah})$, the outer boundary
  $\partial_+A \times \bS^1$ has a neighborhood
  $\Op(\partial_+A \times \bS^1) \subset X_{ah}$ that is foliated
  diffeomorphic to $U_{out}$. By an abuse of notation, we call this
  neighborhood $U_{out}$.  The other open set, which is a
  neighbourhood $\Op(\partial_-A \times \bS^1)$ of the inner boundary,
  is denoted by $U_{ah}$. Therefore, $X_{ah}=U_{out} \cup
  U_{ah}$. There is also a diffeomorphism
  \begin{equation*}
    U_\cap:=U_{out} \cap U_{ah} \simeq U_{out} \cap U_{Reeb}.
  \end{equation*}

  Starting from $X_{Reeb}$, the space $X_{ah}$ can be constructed by
  deleting $U_{Reeb}\bs U_\cap$ and gluing in $U_{ah}$, that is,
  \begin{equation*}
    X_{ah}=(X_{Reeb} \bs (U_{Reeb} \bs U_\cap)) \cup_{U_\cap} U_{ah}.
  \end{equation*}
  Such a surgery can be performed at the level of cotangent bundles to
  transform the standard filling of $\bS(FT^*X_{Reeb})$ to a standard
  filling of $\bS(FT^*X_{ah})$. The strong symplectic form on the
  foliated cotangent bundle depends on a choice of splitting of the
  tangent space of the foliated three-manifold into $T\F_X$ and
  $T\F_X^\perp$. In order to perform the surgery on foliated cotangent
  bundles, we need to ensure that the splittings agree on the overlap.
\end{remark}

\begin{figure}
  \begin{center}
    \scalebox{.8}{%% Creator: Inkscape 1.4.3 (1:1.4.3+202512261034+0d15f75042), www.inkscape.org
%% PDF/EPS/PS + LaTeX output extension by Johan Engelen, 2010
%% Accompanies image file '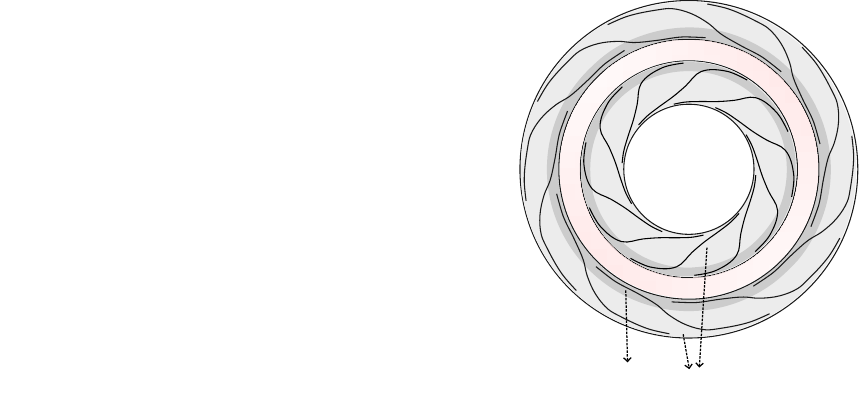' (pdf, eps, ps)
%%
%% To include the image in your LaTeX document, write
%%   \input{<filename>.pdf_tex}
%%  instead of
%%   \includegraphics{<filename>.pdf}
%% To scale the image, write
%%   \def\svgwidth{<desired width>}
%%   \input{<filename>.pdf_tex}
%%  instead of
%%   \includegraphics[width=<desired width>]{<filename>.pdf}
%%
%% Images with a different path to the parent latex file can
%% be accessed with the `import' package (which may need to be
%% installed) using
%%   \usepackage{import}
%% in the preamble, and then including the image with
%%   \import{<path to file>}{<filename>.pdf_tex}
%% Alternatively, one can specify
%%   \graphicspath{{<path to file>/}}
%% 
%% For more information, please see info/svg-inkscape on CTAN:
%%   http://tug.ctan.org/tex-archive/info/svg-inkscape
%%
\begingroup%
  \makeatletter%
  \providecommand\color[2][]{%
    \errmessage{(Inkscape) Color is used for the text in Inkscape, but the package 'color.sty' is not loaded}%
    \renewcommand\color[2][]{}%
  }%
  \providecommand\transparent[1]{%
    \errmessage{(Inkscape) Transparency is used (non-zero) for the text in Inkscape, but the package 'transparent.sty' is not loaded}%
    \renewcommand\transparent[1]{}%
  }%
  \providecommand\rotatebox[2]{#2}%
  \newcommand*\fsize{\dimexpr\f@size pt\relax}%
  \newcommand*\lineheight[1]{\fontsize{\fsize}{#1\fsize}\selectfont}%
  \ifx\svgwidth\undefined%
    \setlength{\unitlength}{411.38175904bp}%
    \ifx\svgscale\undefined%
      \relax%
    \else%
      \setlength{\unitlength}{\unitlength * \real{\svgscale}}%
    \fi%
  \else%
    \setlength{\unitlength}{\svgwidth}%
  \fi%
  \global\let\svgwidth\undefined%
  \global\let\svgscale\undefined%
  \makeatother%
  \begin{picture}(1,0.46407171)%
    \lineheight{1}%
    \setlength\tabcolsep{0pt}%
    \put(0,0){\includegraphics[width=\unitlength,page=1]{surgery.pdf}}%
    \put(0.71301011,0.01525577){\makebox(0,0)[lt]{\lineheight{1.25}\smash{\begin{tabular}[t]{l}$U_{ah}$\end{tabular}}}}%
    \put(0,0){\includegraphics[width=\unitlength,page=2]{surgery.pdf}}%
    \put(0.15873174,0.00876264){\makebox(0,0)[lt]{\lineheight{1.25}\smash{\begin{tabular}[t]{l}$U_{reeb}$\end{tabular}}}}%
    \put(0.23089133,0.01348514){\makebox(0,0)[lt]{\lineheight{1.25}\smash{\begin{tabular}[t]{l}$U_{out}$\end{tabular}}}}%
    \put(0.79115951,0.00677532){\makebox(0,0)[lt]{\lineheight{1.25}\smash{\begin{tabular}[t]{l}$U_{out}$\end{tabular}}}}%
    \put(0,0){\includegraphics[width=\unitlength,page=3]{surgery.pdf}}%
  \end{picture}%
\endgroup%
}
  \end{center}
  \caption{Left: A cross-section of a Reeb-foliated solid
    torus. Right: A cross-section of an almost-horizontally foliated
    $A^2 \times S^1$.}
  \label{fig:surgery}
\end{figure}

Using the surgery construction, we prove the following proposition,
which says that a filling of the foliated unit cotangent bundle of
$X_{Reeb}$ is standard.  Since $(\bS^3,\F_{Reeb})$ is made up of two
copies of $X_{Reeb}$, Proposition \ref{prop:reebexact} is then an easy
consequence.

\begin{proposition}\label{prop:reeb}
{\rm(Fillings of Reeb components are standard)} 
Suppose $W$ is a foliated filling of $\bS(FT^*X_{Reeb})$ and
$W^\infty$ is the extended filling with an identification of ends
given by a diffeomorphism
\begin{equation}
  \label{eq:endsreeb}
  \phi:([R,\infty) \times \bS(FT^*X_{Reeb}),\om_\can) \to (W^\infty \bs K, \om)
\end{equation}
for some $R>0$ and a compact subset $K \subset W^\infty$, that
satisfies $\phi^*\om=\om_\can$.  Let $L \subset W$ be the boundary
leaf, and let $L^\infty \subset W^\infty$ be its extension. Suppose
$\phi$ extends to a symplectomorphism
\begin{equation}
  \label{eq:lsymp}
  \phi:(T^*\bT^2,\om_\can) \to L^\infty.
\end{equation}
Then, there is a foliated symplectomorphism
\begin{equation}\label{eq:reebcptsymp}
  \phi:(FT^*X_{Reeb},\om_\can) \to W^\infty
\end{equation}
which extends both \eqref{eq:endsreeb} and \eqref{eq:lsymp}. Here,
$\om_\can$ is a strong symplectic form that is canonical on the leaves
of the foliated cotangent bundle.
\end{proposition}
\begin{proof}
  The proposition is proved by applying the surgery construction of
  Remark \ref{rem:surgeryx} at the level of cotangent bundles.  We
  carry out the proof in two steps -- one of performing the surgery
  and one for going back to the original filling.

  \vskip .1in 
{\sc Step 1}: {\em  There is foliated diffeomorphism
  \begin{equation}
    \label{eq:optt-id}
    \phi_\pre: (\Op(T^*\bT^2), \om_\can) \to (\Op(L^\infty),\om) 
  \end{equation}
  that is equal to $\phi$ in the cylindrical ends \eqref{eq:endsreeb} and the closed leaf \eqref{eq:lsymp} and that   satisfies $\phi_\pre^*\om=\om_\can$.}

We will extend the map in \eqref{eq:endsreeb} and \eqref{eq:lsymp} to
$\Op(T^*\bT^2)$ such that it is a foliated diffeomorphism and maps
integral curves of the line field $\ker(\om_\can)$ to those of
$\ker(\om)$. The latter condition will ensure $\phi_\pre^*\om=\om_\can$
because the flow along $\ker(\om)$ preserves $\om$ and $\phi_\pre$ is a
symplectomorphism on $T^*\bT^2$. In order to prove that the map $\phi_\pre$
can be extended in this manner, it is enough to check that for any
pair of transversals $\tau_0 \subset FT^*X_{Reeb}$ through
$p \in T^*\bT^2$ tangent to $\ker(\om_\can)$ and $\tau_1$ through
$\phi(p) \in L^\infty$ tangent to $\ker(\om)$ there is a map
$\phi : \tau_0 \to \tau_1$ such that the following holds: If, for
$k=0, 1$, $\tau_k$ is related by foliated homotopy to a transversal
$\tau_k'$ lying in the cylindrical end so that
$\phi_\pre: \tau_0' \to \tau_1'$ is defined, then $\phi_\pre$ commutes with the
foliated homotopy. Such a map between transversals can indeed be
uniquely defined because $\phi_\pre$ commutes with holonomy transport
\eqref{eq:def-ht} on $T^*\bT^2$ and $L^\infty$. The last observation
follows from the fact that any loop in $T^*\bT^2$ or $L^\infty$ is
homotopic to a loop in the cylindrical end, where the map $\phi_\pre$ is
already defined.

\vskip .1in 
  {\sc Step 2}: {\em The surgery in Remark \ref{rem:surgeryx} can be performed at the level of cotangent bundles to yield a filling $W_{ah}$ of $\bS(FT^* X_{ah})$.}

  We take the transversal $\gamma \in X_{Reeb}$ close enough to the
  boundary leaf $\bT^2$ so that $FT^*X_{Reeb}|_{\gamma}$ is contained
  in $\Op(T^*\bT^2)$. Then the region in which we perform the surgery
  has the standard foliated symplectic form. By applying Proposition \ref{prop:openstd} to
  the image $\phi_{pre}(FT^*U_{Reeb})$, we assume that
  $\phi_{pre}(FT^*U_{Reeb}) \simeq \bS^1 \times T^*\bD^2$ is a trivial
  symplectic foliation. Consequently, the surgery in Remark
  \ref{rem:surgeryx} can be performed at the level of cotangent
  bundles to produce a foliated symplectic manifold
  \begin{equation}
    \label{eq:wah}
    W_{ah}^\infty:=W^\infty \bs \phi_{pre}(FT^*(U_{Reeb}\bs U_\cap)) \cup_{\bD FT^*U_\cap} \bD FT^*U_{ah}.
  \end{equation}
  By construction, there is a foliated diffeomorphism
  \begin{equation}\label{eq:ahend}
    \phi_{ah}:([R,\infty) \times \bS(FT^*X_{ah}),\om_\can) \to (W^\infty_{ah} \bs W_{ah},\om),
  \end{equation}
  that satisfies $\phi_{ah}^*\om=\om_\can$, 
  where $\om_\can$ is a strong symplectic form that is the canonical
  form on each leaf, and $W_{ah} \subset W^\infty_{ah}$ is a compact set.  Therefore, $W_{ah}$ 
  is a filling of
  $\bS(FT^*X_{ah})$.

  We remark that $W^\infty_{ah}$ has two boundary components -- the
  outer one $\partial_+W_{ah}$ which is also a boundary leaf of
  $W^\infty$, and the inner one $\partial_-W_{ah}$ which is glued in
  by the surgery. Further, $\phi_{ah}$ in \eqref{eq:ahend} extends to
  \begin{equation}
    \label{eq:phiahforms}
    \phi_{ah}: (\Op(\partial_- FT^*X_{ah}),\om_\can) \to (\Op(\partial_-W_{ah}^\infty),\om), 
  \end{equation}
  and in this region, $\phi_{ah}^*\om=\om_\can$.  Note that the
  equality of forms is stronger than foliated symplectomorphism.

\vskip .1in 
  {\sc Step 3}: {\em There is a foliated symplectomorphism
    \begin{equation*}
      \phi_{ah}: (FT^*X_{ah},\om_\can) \to W_{ah}^\infty, 
    \end{equation*}
    that extends $\phi_{ah}$ in \eqref{eq:ahend} and
    \eqref{eq:phiahforms}.}
  
  The boundary leaves $\partial_\pm W_{ah}^\infty$ have zero Dehn
  twist, and so,
by Proposition \ref{prop:unique}, 
  there is a diffeomorphism
  \begin{equation*}
    \tilde \phi_{ah}: (FT^*X_{ah},\om_\can) \to W_{ah}^\infty 
  \end{equation*}
  that extends the identification of the non-compact ends, and such
  that $\tilde \phi_{ah} = \phi_{ah}$ on $\Op(\partial_-FT^*X_{ah})$, and such that $\tilde \phi_{ah}$ is a symplectic deformation equivalence. 
  We will produce a foliated symplectomorphism called $\phi_{ah}$ by deforming 
  $\tilde \phi_{ah}$  relative to $\Op(\partial_- FT^*X_{ah})$, and relative to the cylindrical ends.
  As in the proof of Proposition \ref{prop:unique}, 
  $\tilde \phi_{ah}$ is a symplectic deformation equivalence via three
  paths of symplectic forms
  \begin{equation}\label{eq:paths}
    \om_\can \to \om_\can + \om_{inflate} \to \tilde \phi_{ah}^*\om + \om_{inflate} \to \tilde \phi_{ah}^* \om,
  \end{equation}
  where $\om_{inflate}:=d(f(p_1)\eta_2(p_2))$ is the inflation form
  from the proof of Proposition \ref{prop:unique}.
  We apply the foliated Moser's theorem (Lemma \ref{lem:folmos} \eqref{part:moser-b})
  to each of the arrows in \eqref{eq:paths}. 
  % The deformation of
  % $\tilde \phi_{ah}$ to a foliated symplectomorphism is produced using
  % the foliated Moser's theorem, for which we need a primitive for
  % $\tilde \phi_{ah}^*\om - \om_\can$.
  The compactly supported
  primitive $\alpha \in \Om^1_c(FT^*X_{ah})$
  for $\tilde \phi_{ah}^*\om - \om_\can$ (corresponding to the second arrow) 
   vanishes on
  $\Op(\partial_-FT^*X_{ah})$.
%
%
  % This is possible (as in the proof of
  % the Claim in Step 1), because $\om_\can=\tilde \phi_{ah}^*\om$ in
  % this region, and $FT^*X_{ah}$ deformation retracts to this region.
  % In \eqref{eq:paths}, the second arrow is a path
  % $t \om_\kappa + (1-t)\phi_{ah}^*\om + \om_{inflate}$.
  % Because of the
  % choice of the primitive $\alpha$, Moser's theorem does not deform
  % $\tilde \phi_{ah}$ in the region $\Op(\partial_- FT^*X_{ah})$.
  The
  deformations on $\tilde \phi_{ah}$ produced by Moser's theorem
  applied to the first and third arrows cancel out in
  $\Op(\partial_- FT^*X_{ah})$. Thus the deformation produced by
  Moser's theorem is relative to the region
  $\Op(\partial_- FT^*X_{ah})$. The resulting map is the foliated symplectomorphism $\phi_{ah}$
  on $FT^*X_{ah}$ that extends \eqref{eq:ahend}, 
    \eqref{eq:phiahforms}.

    \vskip .1in 
{\sc Step 4}: {\em The surgery can be reversed to prove the Proposition.} 
  
  We now have a foliated symplectomorphism $\phi_{ah}$ which is
  additionally an equivalence of forms on
  $\Op(\partial_- FT^*X_{ah})$. As a consequence, we can reverse the
  the surgery \eqref{eq:wah}, and we obtain a foliated
  symplectomorphism
  \begin{equation*}
    \tilde \phi_{Reeb}: (FT^*X_{Reeb},\om_\can) \to W^\infty
  \end{equation*}
  that agrees with the identification of ends \eqref{eq:endsreeb}.

  However, the map $\tilde \phi_{Reeb}$ is not equal to the
  symplectomorphism \eqref{eq:lsymp} of the boundary $T^*\bT^2$. On
  the boundary leaf, $\phi^{-1} \tilde \phi_{Reeb}$ is a compactly
  supported symplectomorphism of $T^*\bT^2$. By \cite[Theorem
  5]{Wendl}, the space of compactly supported symplectomorphisms on
  $T^*\bT^2$ is contractible. Further, since $H^1_c(T^*\bT^2)=0$, any
  such map is a Hamiltonian diffeomorphism. The generating Hamilton
  function can be extended to the interior of $FT^*X_{Reeb}$ in a way
  that it vanishes outside a small neighbourhood of the boundary
  leaf. Thus, we obtain a compactly supported foliated Hamilton
  diffeomorphism $\psi$ on $FT^*X_{Reeb}$ that is equal to
  $\phi^{-1}\tilde \phi_{Reeb}$ on the boundary leaf. The Proposition
  is proved by the map $\phi:=\tilde \phi_{Reeb} \psi^{-1}$.
\end{proof}

\begin{proof}[Proof of Proposition \ref{prop:reebexact}]
  We recall that the compact leaf $L_M$ of the unit cotangent bundle
  $\bS(FT^*\bS^3)$ has a compact filling $L$. By the hypothesis of the
  proposition, the extended filling $L^\infty$ is symplectomorphic to
  $T^*\bT^2$ via
  \begin{equation*}
    \phi:(T^*\bT^2, \om_\can) \to (L^\infty ,\om), 
  \end{equation*}
  and $\phi$ agrees with the identification on the ends. The leaf
  $L^\infty$ divides the extended filling $W^\infty$ into two
  components $W^{\infty,\pm}$, each of which is an extended filling of
  $\bS(FT^*X_{Reeb})$. By Proposition \ref{prop:reeb},
  $W^{\infty,\pm}$ are standard fillings, i.e. there are foliated
  symplectomorphisms
  \begin{equation*}
    \phi_\pm:(FT^*X_{Reeb},\om_\can)  \to W^{\infty,\pm}
  \end{equation*}
  that agrees with $\phi$ on $T^*\bT^2$ and on the cylindrical ends.
  The maps $\phi_\pm$ patch to yield an extension of $\phi$, denoted by 
  \begin{equation*}
    \phi_{patch}:(FT^*\bS^3_{Reeb},\om_\can) \to (W^\infty,\om).
  \end{equation*}
  The patched map $\phi_{patch}$ is continuous, but not smooth.  By a
  deformation of $\phi_{patch}$ that is $C^1$-small in leaves, we may
  obtain a foliated diffeomorphism $\tilde \phi$ on $FT^*\bS^3_{Reeb}$
  that is
  \begin{enumerate}
  \item equal to $\phi$ in the closed leaf and cylindrical ends,
  \item an equivalence of two-forms in a small neighborhood $\Op_0(T^*\bT^2)$ ($\phi$ can be extended to such an equivalence of two-forms as in Step 1 of the proof of Proposition \ref{prop:reeb}),
  \item and equal to $\phi_{patch}$ outside a slightly larger neighbourhood $\Op_1(T^*\bT^2)$. 
  \end{enumerate}
  If the deformation is $C^1$-small enough,
  $(1-t)\om_\can + t \tilde \phi^*\om$ is a strong symplectic form for
  all $t \in [0,1]$. By the foliated Moser's theorem (Lemma \ref{lem:folmos}),
  we can correct $\phi_{patch}$
  to yield a smooth foliated symplectomorphism $\phi$ required by the
  Proposition.
\end{proof}
The proof of Proposition \ref{prop:reeb} used the following foliated
version of Moser's theorem. % Similar results have appeared in the
% literature, see \cite[Theorem 2.3]{Hector:Moser}, \cite[Lemma
% 18]{CPP}.
%
\begin{lemma}
  {\rm(Foliated Moser stability, \cite[Theorem 2.3]{Hector:Moser}, \cite[Lemma
18]{CPP})}
\label{lem:folmos}
  Suppose $(M,\F)$ is a foliated manifold, whose boundary $\partial M$ is tangent to the foliation $\F$. Suppose $\om_0$, $\om_1$ are strong symplectic forms on $M$ whose difference $(\om_1-\om_0)$ is compactly supported and $[\om_1-\om_0]=0$ in $H^2_c(M)$.
  \begin{enumerate}
  \item \label{part:moser-a} Then there is a flow
    $\{\phi_t\}_{t \in [0,1]}$ on $M$ that is identity outside a
    compact set and that satisfies $\phi_t^*\om_t=\om_0$, where
    $\om_t=(1-t)\om_0 + t\om_1$.
  \item \label{part:moser-b} Additionally, suppose $X \subset M$ is a closed leaf and
    $M \simeq X \times [0,1]$, and $(\om_1-\om_0) \equiv 0$ on
    $X \times [0,\eps]$ for some $\eps>0$. Then the flow
    $\{\phi_t\}_{t \in [0,1]}$ in \eqref{part:moser-a} can be chosen to be identity on $X \times [0,\eps]$.
  \end{enumerate}
\end{lemma}
\begin{proof}
  The proof is exactly as in the unfoliated version of the result
  \cite[Theorem 7.4]{lsg} by defining a compactly supported primitive $\mu$ of $\om_1-\om_0$. The maps $\{\phi_t\}_t$ are
  defined as the flow of the vector fields
  $\{v_t \subset T\F\}_{t \in [0,1]}$ defined by the condition
  \[i_{v_t}\om_t|_\F= \mu|_\F.  \] For \eqref{part:moser-b}, following
  the technique of proof of \cite[Proposition 6.8]{lsg}, the primitive $\mu$ 
  can be chosen so that it vanishes in $X \times [0,\eps]$, and
  therefore $v_t \equiv 0$ on this subset.
\end{proof}
\section{Properties of punctured holomorphic maps}\label{sec:hol-prop}
We extend the proofs of various foundational results of holomorphic
curves in manifolds with cylindrical ends to the setting of foliated
manifolds.
For punctured holomorphic maps to behave reasonably, we require the
contact form on the cylindrical ends to be Morse-Bott as defined below.
Throughout this section, $J$ denotes a cylindrical almost complex structure 
as in Definition \ref{def:Jcyl}. 

\begin{definition} \label{def:mb} {\rm(Morse-Bott)} Let
  $(M,\F,\alpha)$ be a foliated manifold with contact form
  $\alpha \in \Om^1(M)$.  The contact form $\alpha$ is {\em
    Morse-Bott} if the action spectrum
  \[\sig(\alpha):=\{\int_\gamma \alpha : \gamma \text{ is a Reeb orbit} \} \subset \R_{\geq 0}\]
  is discrete,  and 
  for any period $T>0$,
  \[N_T:=\{x \in M : \psi_T(x)=x, \psi_t(x) \neq x\, \forall t \in
    (0,T)\}\]
  is a submanifold of $M$, $\F|N_T$ is a codimension one foliation,
  and $T_x N_T=\ker(d\psi_T(x) - \Id)$.
  
\end{definition}

In the presence of the Morse-Bott condition, punctured holomorphic
curves asymptote to Reeb orbits contained in the same leaf as the map
(Theorem \ref{thm:rem-fol}); and the limit of a converging sequence of
holomorphic curves is a multi-level nodal curve contained in a single
leaf (Theorem \ref{thm:folcpt}).  Without the Morse-Bott condition, at
least one of these conditions may fail as pointed out by Pino-Presas
\cite{Pino:weinstein}.

% Full Morse-Bott is necessary. A cylinder in L converges to a Reeb
% orbit in a different leaf. Then, there needs to be a nearby reeb
% orbit in L so that we can apply strip monotonicity.

 \subsection{Fredholm theory and transversality}
 \label{subsec:fred}
 In this section, we introduce a Banach completion for the space of
 asymptotically cylindrical maps with trivial holonomy, and define a
 Cauchy-Riemann operator whose zero set is the moduli space of
 holomorphic maps.
 The moduli space of holomorphic maps in a fixed relative homology class 
 is a foliated manifold if all the maps are unobstructed. 
 We consider maps on a
 punctured Riemann surface $\oSig:=\Sig \bs \{z_1,\dots,z_k\}$ which
 asymptote to Reeb orbits of periods $T_1,\dots,T_k$. Thus, $u$
 represents a relative homology class
 $\beta \in H_2(W, \cup_i N_{T_i}))$.  Let $\M(\beta,J)$ denote the
 moduli space of asymptotically cylindrical $J$-holomorphic maps
 representing the class $\beta$, and which have trivial holonomy.

 We recall that if the image of a map has trivial holonomy in the
 foliation $\F$, then a transverse coordinate can be defined in a
 neighborhood of the image. That is, there is a map
 \begin{equation}
   \label{eq:bt}
   \bt : \Op(\on{im}(u)) \to \R  
 \end{equation}
 on a neighborhood $\Op(\on{im}(u)) \subset W^\infty$ with no critical
 points whose level sets are tangent to the foliation.

 For asymptotically cylindrical maps, there is a natural evaluation
 map at punctures as follows.  Let $\oSig$ be $\P^1$ with punctures
 $z_1,\dots,z_k$.  We fix complex coordinates
 \[(s_i,t_i) : C_i:=\Op(z_i) \bs \{z_i\} \to \R_{\geq 0} \times \bS^1\]
 in the neighborhoods of punctures.  Let $\B(\beta)$ denote the space
 of asymptotically cylindrical smooth (not necessarily holomorphic)
 maps  $u: \oSig \to W^\infty$ of
 class $\beta$.
 We recall the notion of asymptotic cylindricity from \eqref{eq:exp-decay}: For any $u \in \B(\beta)$,
 there is a constant $c$ such that 
 at each puncture $z_i$, 
 \begin{equation}
   \label{eq:expconv}
   d_\F(u(s,t), (a_i+T_is, \gamma_i(T_i t)) \leq c e^{-\delta |s|}
 \end{equation}
 for some $a_i \in \R$ and Reeb orbit $\gamma_i : \R/T_i\Z \to M$.
 The constant $\delta>0$ is from Theorem \ref{thm:rem-fol} and is held fixed throughout this section. Such a constant can indeed be chosen since the maps we consider are asymptotic to Reeb orbits of period $2\pi$. 
 The 
 {\em evaluation at the puncture} $z_i$ is 
 \[\ev_{z_i}(u) :=(a_i,\gamma_i(0)) \in \R \times N_{T_i},\]
 where we recall that $N_{T_i} \subset M$ is the foliated submanifold
 consisting of Reeb orbits of period $T_i$. The image of the
 evaluation map $\ev:=(\ev_{z_i})_i$ lies in
 \begin{equation}
   \label{eq:leafdiag}
   S:=\{(x_1,\dots,x_k) \in \prod_i (\R \times N_{T_i}) : \bt(x_1)=\dots=\bt(x_k)\},  
 \end{equation}
 which is a manifold with a foliation of codimension one.

 We define a Sobolev completion of the space of maps, and show that it
 is a Banach manifold with a foliation of codimension one.  Let $p>2$
 and $0<\lam<\delta$. 
 For 
 any $k \geq 0$, define a weighted Sobolev space 
 \[W^{k,p,\lam}(\R_{\geq 0} \times \bS^1):=\{f(s,t) \in W^{k,p}_{loc}: e^{\lam s}f \in
   W^{k,p}(\R_{\geq 0} \times \bS^1)\}. \]
 The Sobolev completion
 \[\B^{p,\lam}:=\B(\beta)^{p,\lam}\]
 is defined as the set of $W^{1,p}_{loc}$-maps $u: \oSig \to W^\infty$
 such that for any puncture $z_i$, there is a Reeb orbit
 $\gamma_i : \R/T_i \Z \to M$ and a Reeb cylinder
 $\ol \gamma_i(s,t):=(a_i + T_i s, \gamma_i(T_i t))$ such that
 $u(s_i,t_i)=\exp_{\ol \gamma_i}\eta_i$ and
 $\eta_i \in W^{1,p,\lam}(C_i , \ol \gamma_i^*TW^\infty)$.  We will
 show that the space $\B^{p,\lam}$ is a Banach manifold, by describing
 a Banach space structure on a neighborhood of any
 $u \in \B^{p,\lam}$.  A neighbourhood $\Op(\image(u))$ has a transverse coordinate $\bt$ as in \eqref{eq:bt}. In this neighborhood, we choose a metric $g_u=g_\F \oplus d\bt^2$, where $g_\F$ is a metric on the
 leaves of $\F|\Op(\on{im(u)})$, and on the cylindrical end
 $\Op(\on{im(u)}) \cap (\R_{\geq 0} \times M)$,
 $g_\F=da^2 \oplus g_{\F,M}$ and is $\R$-invariant.
 The tangent space
 $T_u\B^{p,\lam}$ is the image of the map
 \begin{align}
   T_{\ev(u)}S \oplus W^{1,p,\lam}(C,u^*T\F) &\to W^{1,p}_{loc}(C,u^*TW^\infty), \\
   \nonumber  ((\xi_{z_i})_i, \xi) &\mapsto \sum_i\beta_i \bT_u^i \txi_{z_i} + \xi,  
 \end{align}
 where $S$ is from \eqref{eq:leafdiag}; $\beta_i:\oSig \to [0,1]$ is a
 cutoff function that is supported in a neighborhood of the puncture
 $z_i$;
 \[\txi_{z_i}(s,t):=(\xi_{z_i,\R}, d\psi_{T_i t}(\xi_{z_i,M}) ) \in
 \Gamma(\R \times \R/\Z, \ol \gamma_i^*T(\R \times M))),\]
%
% sv is the section obtained by parallel translating the vector .. by R translations and Reeb flow
$\xi_{z_i}=(\xi_{z_i,\R}, \xi_{z_i,M}) \in T_{x_i}(\R \times N_{T_i})$ is a splitting into components; 
and
 $\bT_u^i : \Gamma(C_i, \ol \gamma_i^*T(\R \times M)) \to \Gamma(C_i,
 u^*TW^\infty)$ is parallel transport along geodesics.  The tangent
 space $T_u\B^{p,\lam}$ is identified to a neighborhood of $u$ via the
 exponential map $\xi \mapsto \exp_u\xi$ with respect to the metric $g_u$. Therefore, a neighborhood of
 $u$ is isomorphic to a neighborhood of $(\ev(u),0)$ in
 $S \times W^{1,p,\lam}(\oSig, u^*TW^\infty)$.  The space
 $\B^{p,\lam}$ is thus a foliated Banach manifold, where the foliation is
 pulled back from $S$.

 The moduli space of holomorphic cylinders is cut out as the zero set
 of the Cauchy-Riemann operator.  The non-linear Cauchy-Riemann
 operator is a smooth section $\delbar_J: \B^{p,\lam} \to \E^{p,\lam}$
 of a Banach bundle $\E^{p,\lam} \to \B^{p,\lam}$ whose fiber over a
 point $u \in \B^{p,\lam}$ is
 $\E_u:=L^{p,\lam}(\Om^{0,1}(\oSig, u^*T\F))$. The linearization
 \cite[p27]{wendl:sft} denoted by
 %
 % \begin{equation}
 %   \label{eq:du-def}
 \[  D_u:=d\delbar_J^u: T_u\B^{p,\lam} \to \E^{p,\lam}_u  \]
 % \end{equation}
%
 is a Fredholm operator, and its restriction to the leaf of
$\B^{p,\lam}$ containing $u$ 
 is denoted by 
 \[D_u^\F:=D_u|\B^{p,\lam}_\F, \quad \text{where} \quad \B^{p,\lam}_\F:=\B^{p,\lam} \cap W^{1,p}_{loc}(\Sig,u^*T\F).\]
 We say that the map $u$ is {\em unobstructed} if $D_u$ is
 surjective, and {\em leafwise unobstructed} if $D_u^\F$ is surjective.

 We also define a leafwise Cauchy-Riemann operator and the notion of
 unobstructedness for nodal curves modelled on a fixed tree. Let $\oSig$ be a punctured nodal
 curve with nodes $w_1\dots w_k$, and for each $w_i$, let $w_i^+$,
 $w_i^-$ be lifts of the node. Let $\beta=(\beta_i)_i$ denote the
 relative homology class of each of the components, and we denote by
 $\M(\beta, J)$ the set of $J$-holomorphic asymptotically
 cylindrical maps in the class $\beta$.  For each of the nodal lifts,
 we have an evaluation map
 $\ev_{w_i^\pm} : \M(\beta, J) \to (W^\infty)^2$.
 For $\M(\beta,J)$ to be a smooth manifold in a neighborhood of $u$,
  component-wise moduli
 spaces $\M(\beta_i,J)$ must be transversely cut out at $u$, and the
 evaluation maps $(\ev_{w_i^+}, \ev_{w_i^-})$ of the nodal lifts
 defined on $\prod_i \M(\beta_i,J)$ must be transverse to the diagonal.
 We say that a nodal map 
 $u \in \M(\beta,J)$ is {\em unobstructed} if
 \begin{equation}
   \label{eq:Du-nodal}
 (D_u, (d\ev_{w_i}^+, d\ev_{w_i}^-)_i) : T_u\B^{p,\lam} \to \E^{p,\lam}_u \oplus (\oplus_i(T_{(u(w_i^+),
   u(w_i^-))}(W^\infty)^2/T\Delta))  
 \end{equation}
 is surjective, and {\em leafwise unobstructed} if 
\[(D_u^\F, (d\ev_{w_i}^+, d\ev_{w_i}^-)_i) : T_u\B^{p,\lam}_\F \to \E^{p,\lam}_u \oplus (\oplus_i(T_{(u(w_i^+),
    u(w_i^-))}(L^\infty)^2/T\Delta_L))\]
is surjective, where
$L^\infty \subset W^\infty$ is the leaf containing $u$ and 
$\Delta \subset (W^\infty)^2$, $\Delta_L \subset (L^\infty)^2$ are diagonals.

The next result says that the moduli space is a smooth foliated manifold if all its elements are leafwise unobstructed. 

\begin{proposition}\label{prop:transv}
  Let $J$ be a compatible almost complex structure on $(W^\infty,\F)$
  that is cylindrical on the end, and let $\beta$ be the
  (componentwise) relative homology class of a $J$-holomorphic map
  with trivial holonomy.  The set $\M(\beta,J)$ is a smooth manifold
  of expected dimension with a codimension one foliation if all the
  elements $u \in \M(\beta,J)$ are leafwise unobstructed.
\end{proposition}
\begin{proof}
  In the case of smooth maps, the conclusion follows from Proposition \ref{prop:iftfol}, which is the foliated version of the
  implicit function theorem for Banach spaces. In the case of nodal curves modelled on a fixed tree, leafwise unobstructedness implies obstructedness. Indeed, $T_u \B$ consists of a transverse direction $\bran{\partial_{\bt_i}}$ for each component of $u$, a one-dimensional diagonal subspace $\Delta_{\bt}:=\bran{(\partial_{\bt_1},\dots,\partial_{\bt_k})}$ is in the kernel of \eqref{eq:Du-nodal}, and the quotient
  $\oplus_i \bran{\partial_{\bt_i}}/\Delta_{\bt}$ surjects onto the second summand of \eqref{eq:Du-nodal} since the nodal curve is modelled on a tree. The arguments in Proposition \ref{prop:iftfol} carry over to show that
  $\M(\beta,J)$ is a manifold with a codimension one foliation in the nodal case. 
\end{proof}

\begin{proposition}\label{prop:iftfol}
  {\rm(Foliated implicit function theorem)} Let $X$, $Y$ be Banach
  spaces. Let $U \subset X$ be an open set that has a foliation $\F$
  induced by a smooth transverse coordinate $\bt: U \to \R$. For any
  $k \geq 1$, let $F:U \to Y$ be a $C^k$-map of Banach
  manifolds. Suppose the restricted differential $DF_x|T\F$ is
  surjective for all $x \in F^{-1}(0)$. Then,
  \[M:=F^{-1}(0) \subset X\]
  is a $C^k$-Banach manifold with a foliation induced by $\bt|M$.  For
  any $x \in M$, $T_xM=\ker(DF_x)$, and the leaf tangent space is
  $\ker(DF_x|T\F)$.
\end{proposition}
\begin{proof}
  Since the restricted derivative $DF_x|T\F$ is surjective for all
  $x \in F^{-1}(0)$, the derivative $DF_x$ is also surjective. By the
  implicit function theorem for Banach manifolds \cite[Theorem
  A.3.3]{ms:jh}, we can conclude that $M$ is a $C^k$-Banach manifold,
  and $T_xM=\ker(DF_x)$ for all $x \in M$. The surjectivity of
  $DF_x|T\F$ also implies that $d\bt_x : \ker(DF_x) \to \R$ is
  surjective. Therefore, the restriction $\bt|M$ induces a foliation.
\end{proof}

\subsection{Gromov convergence}
The limit object of a converging sequence of holomorphic curves is a
{\em holomorphic building} lying in a leaf of the foliation, and consisting of curves in
different levels. The definition is the same as the unfoliated version, which we 
 recall 
 from \cite{BEHWZ}.

 Let $\Sig$ be a nodal curve modelled on a tree
$\Gamma$, and let $\Sig^\circ:=\Sig \bs \{z_1,\dots,z_k\}$ be a nodal
curve with punctures, where punctures lie in the complement of
nodes. A vertex $v$ of $\Gamma$ corresponds to an irreducible
component $\Sig_v$.
An edge $e$ of $\Gamma$ corresponds to a node of $\Sig$. Each edge is designated either as a {\em Reeb edge} or as an {\em inner edge}, whose meanings are described below.
 We denote by $\Sig_v^\circ$ the complement of
nodal points corresponding to Reeb edges in $\Sig^\circ \cap \Sig_v$.
A {\em holomorphic building}, denoted by
$u: \Sig \to W^\infty$, consists of a partial ordering $\preccurlyeq$ on the set of
vertices of $\Gamma$, and for each vertex $v$ of $\Gamma$, a holomorphic map $u_v : \Sig_v^\circ \to W^\infty$ or 
$u_v : \Sig_v^\circ \to \R \times M$ as in Section
\ref{subsec:holc}. The maps asymptote to Reeb orbits, both at
punctures $z_i$ and at punctures corresponding to Reeb edges, with
the Reeb orbit at a puncture point $z \in \Sig_v$ denoted by $\Ree_z$.

A {\em matching condition} is satisfied at each node of $\Sig$, which we now describe.
Consider a node $w$ with nodal lifts $w_\pm \in \Sig_{v_\pm}$
corresponding to an inner edge. Then, we require that $\Sig_{v_+} \approx \Sig_{v_-}$ with respect to the partial ordering $\preccurlyeq$,
and $u_+(w_+)=u_-(w_-)$.
The other case is if $w$ corresponds to a Reeb edge, in which case, we require 
$\Sig_{v_-} \prec \Sig_{v_+}$ and the maps $u_{v_+}$, $u_{v_-}$ asymptote to 
the Reeb orbits $\Ree_{w_+}$, $\Ree_{w_-}$
at the punctures $w_+$, $w_-$. The matching condition is that $\Ree_{w_+}$, $\Ree_{w_-}$ 
are
both equal to $\Ree_w : \R/T\Z \to M$ with opposite orientations, which 
means the following: For any choice of coordinates
\[(s,t) : \Op(w_\pm) \bs \{w_\pm\} \to \R_{\geq 0} \times \R/\Z, \]
there is a point $a_\pm \in \R$, and constants $c$, $\delta>0$ such that
\begin{equation}
  \label{eq:opp-orient}
  d_\F(u_{v_\pm}(s,t), (a_\pm \mp Ts, \Ree_w(\mp Tt))) \leq ce^{-\delta s}.  
\end{equation}
Thus the map $u_{v_\pm}$ is asymptotic to the Reeb orbit $\Ree_w$
in the $(\mp \infty)$-end of $\R \times M$. 
For both types of nodes, the matching condition
implies that various components of $u$ lie on the same leaf, and thus
$u$ is a holomorphic building (in an unfoliated sense) in a leaf
$\R \times L$ of $\R \times M$.

Convergence of each component of the limit map is modulo translations.
{\em Translation} by $\tau \in \R$ on $\R \times M$ is denoted by
\[e^\tau : \R \times M \to \R \times M, \quad (a,m) \mapsto
  (a+\tau,m).\]
A translation by  $\tau \geq 0$ is also defined on the cylindrical end $\R_{\geq 0} \times M$ in $W^\infty$. 

% We consider pseudoholomorphic maps
% \[u=(u_\R,u_M) : \Sig \to \R \times M,\] which in the foliated case, are
% required to lie on a leaf of the foliation $\F$.

Gromov convergence of maps is as in the unfoliated case in \cite{BEHWZ} with
the difference that the convergence of map components is in the
manifold $W^\infty$ and not in the leaves. Therefore, it is
possible for the limit of a sequence of maps in a leaf $L \subset W^\infty$
to be in a leaf $L'$ that lies in the closure of
$L$.

\begin{definition}
  {\rm(Gromov convergence)}
Let $(\Sig_\nu, (z_{1,\nu}, \dots,z_{k,\nu}))$ be a
sequence of Riemann surfaces and let
$\Sig_\nu^\circ:=\Sig \bs \{z_{1,\nu}, \dots,z_{k,\nu}\}$ be punctured
curves.  A sequence of maps $u_\nu : \Sig_\nu^\circ \to W^\infty$
{\em Gromov converges} to a building $u:\Sig^\circ \to W^\infty$ defined on a
nodal punctured Riemann surface $\Sig^\circ$, if for each irreducible
component $\Sig^\circ_v \subset \Sig^\circ$, there is a sequence of
translations $(t_\nu(v))_\nu$ such that the following holds:
\begin{enumerate}
\item {\rm(Convergence of domains)} The surface $\Sig^\circ$ is the
  limit of $\Sig_\nu^\circ$ in the following sense: There are
  sequences of markings $z_{i,\nu}$, $k+1 \leq i \leq n$, and a stable
  limit $(\Sig,(z_1,\dots,z_n))$ of the sequence
  $(\Sig_\nu, (z_{1,\nu}, \dots,z_{n,\nu}))$, such that
  $\Sig^\circ:=\Sig \bs \{z_1,\dots,z_k\}$.
\item {\rm(Translations)} For any vertex $v$, either
  $t_\nu(v) \to \infty$ or $t_\nu(v)=0$ for all $\nu$.
\item {\rm(Convergence of maps)} For any sequence of embeddings of a
  compact set $i_\nu : K \to \Sig_\nu$ that has a limit
  $i: K \to \Sig^\circ_v$ mapping to the complement of nodes in a
  component $\Sig_v \subset \Sig$, the translated sequence of maps
  $e^{\tau_\nu(v)}(u_\nu \circ i_\nu):K \to \R \times M$ converges
  uniformly to $u_v \circ i : K \to W^\infty$ if $t_\nu(v)=0$, and
  $u_v \circ i : K \to \R \times M$ if $t_\nu(v) \to \infty$.
\end{enumerate}
\end{definition}

Gromov compactness applies to a sequence of maps in $W^\infty$ whose Hofer energy is uniformly bounded.  This different notion of energy
is needed because the integral of the leafwise symplectic form
$d(e^t \alpha)$ is infinite for asymptotically cylindrical holomorphic
curves. We define Hofer energy following \cite{BEHWZ}.

\begin{definition}
  {\rm(Hofer Energy)} Let $(M,\F,\xi)$ be a foliated contact manifold,
  and let $\alpha \in \Om^1(M)$ be a contact form.  Let $\seps>0$ be a
  small constant fixed in the rest of the paper, and define a compact
  strong symplectic foliation
  \begin{equation}
    \label{eq:wsymp}
    (W_\cpt,\om_\cpt):=([-\seps,\seps] \times M, d\alpha + d(\pi_\R\alpha)), 
  \end{equation}
  with the foliation on $W_{cpt}$ being pulled back from $M$.  For an
  increasing diffeomorphism $\varphi : \R \to (-\seps,\seps)$, and the
  corresponding map
  $\varphi \times \Id_M : \R \times M \to (-\seps,\seps) \times M$,
  define a pullback symplectic form
  \[\om_\varphi:=(\varphi \times \Id_M)^* \om_\cpt \in \Om^2(\R \times
    M).\]
  Let $\Sig^\circ$ be a punctured Riemann surface.

  \begin{enumerate}
  \item {\rm(For maps on symplectizations)} For a map
    $u: \Sig^\circ \to \R \times M$ whose image lies on a leaf, the
    {\em Hofer energy} is
    \begin{equation}
      \label{eq:ehof-prod}
      E_\Hof(u):= \sup_\varphi \int_{\Sig^\circ} u^*\om_\varphi, 
    \end{equation}
    where the supremum is over all increasing diffeomorphisms
    $\varphi : \R \to (-\seps,\seps)$.  The {\em horizontal energy}
    \footnote{The quantity $\int u^* d\alpha$ is called {\em
        $\om$-energy} and denoted $E_\om(u)$ in \cite{BEHWZ}.}  of $u$
    is
    \begin{equation}
      \label{eq:Ehor}
      E_\hor(u):=\int_{\Sig^\circ} u^*d\alpha.  
    \end{equation}
  \item {\rm(For maps on manifolds with cylindrical ends)} Let
    $W^\infty$ be a strong symplectic foliation with cylindrical ends,
    and let $u: \Sig^\circ \to W^\infty$ be a map to a leaf of
    $W^\infty$. The {\em Hofer energy} of $u$ is
    \[E_\Hof(u):=\int_{u^{-1}(W)} u^*\om + E_\Hof(u|(u^{-1}(W^\infty
      \bs W)), \]
    where the second term is to be interpreted as in
    \eqref{eq:ehof-prod}.
  \end{enumerate}
\end{definition}

\begin{remark}
  The expression for Hofer energy \eqref{eq:ehof-prod} of a map
  $u=(u_\R, u_M) : \Sig^\circ \to \R \times M$ can be rewritten as
  \begin{equation}
    \label{eq:ehof2}
    E_\Hof(u)= \int_{\Sig^\circ} u_M^*d\alpha + \sup_\varphi \int_{\Sig^\circ} d((\varphi \circ u_\R) (u_M^*\alpha)).  
  \end{equation}
\end{remark}

The Gromov compactness theorem will be applied to a sequence of
holomorphic maps in a fixed relative homology class
$\beta \in H_2(W, \cup_i N_{T_i})$, where
$N_{T_i} \subset M=\partial W$ is a Morse-Bott submanifold consisting
of Reeb orbits of period $T_i >0$. Maps in a
fixed relative homology class have a uniform bound on Hofer energy
by the following proposition and remark.

\begin{proposition}\label{prop:energyhomology} 
  For a (strong or weak) symplectic foliation $(W^\infty,J)$ with cylindrical ends, there is a constant $c>0$ such that 
  for any $J$-holomorphic map $u:\oSig \to W^\infty$  whose punctures are asymptotic to the periodic
  orbits $\gamma_1,\dots,\gamma_k$ in $M$,
  \begin{equation} \label{eq:enid} E_\Hof(u) \leq c\left( \int_{u^{-1}(W)} u^*\om
    + \int_{u^{-1}(W^\infty \bs W)}u^*d\alpha + \sum_i\int_{\gamma_i}
    \alpha\right).
  \end{equation}
\end{proposition}
\begin{proof}
  The proof is identical to the non-foliated case \cite[Proposition
  6.3]{BEHWZ}.
\end{proof}

\begin{remark} {\rm(A topological bound on Hofer energy)}
  \label{rem:HEbd}
  The right hand side of \eqref{eq:enid} is a topological invariant as follows. For a $J$-holomorphic curve $u : \oSig \to W^\infty$, define
  \[\hat u: \oSig \to W, \quad z \mapsto
    \begin{cases}
      u(z), &u(z) \in W,\\
      \pi_M(u(z)) \in \partial W, &u(z) \in \R_{\geq 0} \times M.
    \end{cases}
  \]
  The right hand side of \eqref{eq:enid} is equal to $c \int_{\oSig} \hat u^*\om$. If
  $\om$ is a strong symplectic form on $(W,\F)$, then
  $\int_{\oSig} \hat u^*\om$ is an invariant of the relative homology
  class $\beta \in H_2(W, \cup_i N_{T_i})$ represented by $u$. If $\om$ is a weak symplectic form on $(W,\F)$, the quantity $\int_{\oSig} \hat u^*\om$ is an invariant of the homology class $\beta$ if 
   on every leaf of $W^\infty$, $\beta$ is represented by a
  $J$-holomorphic curve lying in the cylindrical end
  $\R_{\geq 0} \times M$. Indeed, on the cylindrical end,
  $\om = d(e^t \alpha)$ is a closed form.
\end{remark}

\begin{theorem}\label{thm:folcpt}
  {\rm(Gromov compactness)}
  Let $(M,\F)$ be a foliated manifold with a Morse-Bott leafwise
  contact form $\alpha$, and let $W^\infty$ be an extended filling that is a weak or strong 
 symplectic foliation. Let $J$ be a tamed cylindrical almost complex structure on $W^\infty$, and let
  $u_\nu: \Sig_\nu \to W^\infty$ be a sequence of $J$-holomorphic
  maps whose Hofer energy is uniformly bounded.  Then a subsequence of
  $(u_\nu)_\nu$ Gromov converges to a holomorphic building.
\end{theorem}
\begin{proof}
  [Outline of proof of Theorem \ref{thm:folcpt}] We outline the proof
  of Theorem \ref{thm:folcpt} following \cite{BEHWZ}.  A punctured
  Riemann surface $(\Sig,j)$ has a unique hyperbolic metric of
  constant curvature $-1$ of finite volume in the conformal class of
  $j$.  The argument of \cite[Section 10]{BEHWZ} shows that after
  adding finitely many sequences $\{z_{k+1,\nu},\dots,z_{n,\nu}\}$ of
  marked points to the domains $\Sig_\nu$, we may assume that the
  domain $\Sig_\nu$ converges to a stable limit $\Sig$ such that
  \begin{itemize}
  \item the first derivative $\sup_\nu |du_\nu(z)|^{hyp}/r_\nu(z)$ is
    bounded with respect to the hyperbolic metric on
    $\Sig^\nu \bs \{z_{1,\nu},\dots,z_{n,\nu}\}$ and $r_\nu(z)$ is the
    injectivity radius at the point $z$ (see \cite[Section
    10.2.1]{BEHWZ});
  \item and for a sequence of long cylinders
    \begin{equation}
      \label{eq:Anu}
      A_\nu:=[-\tfrac{l_\nu} 2, \tfrac {l_\nu} 2] \times \bS^1 \subset \Sig_\nu, \quad l_\nu \to \infty
    \end{equation}
    that converge to a node in $\Sig$, the maps $u_\nu|A_\nu$ satisfy
    a {\em bubble-free} condition (see \cite[Section 10.2.3]{BEHWZ}),
    namely, that if for a sequence $s_\nu$ satisfying
    $|s_\nu \pm \frac {l_\nu} 2| \to \infty$, a subsequence of maps
    $u_\nu(\cdot - s_\nu)$ converges in $C^\infty_\loc$ to a limit
    $u: \R \times \bS^1 \to \R \times M$, then $u$ has zero horizontal
    energy, and so, $u$ is a Reeb cylinder.
  \end{itemize}
  The procedure of adding extra marked points can be carried out in
  the foliated setting in an analogous way. A finite number of marked
  points suffice to achieve the required conditions, because there is
  a positive lower bound on the horizontal energy of non-constant maps
  that are not Reeb cylinders, see Proposition \ref{prop:quant} and
  Remark \ref{rem:qsph}.  Then there exists a limiting map
  $u:\Sig^\circ\to \R \times M$ such that $u_\nu$ converges to $u$ in
  compact subsets in the complement of nodes and puncture points.  It
  remains to prove that at the puncture points, the map $u$ either has
  a removable singularity or asymptotes to Reeb cylinders, and the
  matching condition is satisfied at nodes. The former is the content
  of Theorem \ref{thm:rem-fol}, and the matching condition is a
  consequence of Theorem \ref{thm:foliated-thin}.
\end{proof}

\begin{proposition}{\rm(Energy quantization)}
  \label{prop:quant}
  Let $M$ be a compact manifold with a contact foliation $(\F,\alpha)$.  
  \begin{enumerate}
  \item {\rm(Cylinders)} \label{part:quant1} For any $K>0$ there is a constant $\hbar>0$ such that a 
  holomorphic map $u: \R \times \bS^1 \to \R \times M$ that is asymptotic
  to $\gamma_\pm : \bS^1 \to M$
  % sv : changed R to \gamma in other places
  % {\color{red} $R_{\pm}$}
  at the end $\pm \infty$ with
  $\int_{\gamma_+}\alpha \leq K$ is either a Reeb cylinder (that is, $E_\hor(u)=0$)  or
  $E_\hor(u) \geq \hbar$. 
\item {\rm(Planes)}\label{part:quant2}
  There is a constant $\hbar>0$ such that for any non-constant holomorphic map $u: \C \to \R \times M$,  $E_\hor(u) \geq \hbar$. 
  \end{enumerate}
\end{proposition}
\begin{proof}
To prove \eqref{part:quant1}, we observe that the horizontal energy of a cylinder $u$ that
asymptotes to the Reeb orbit $\gamma_\pm$ at $\pm \infty$ is
\begin{equation}
  \label{eq:Ehor-rel}
  E_\hor(u)=\int_{\R \times \bS^1} u^*d\alpha =   \int_{\gamma_+}\alpha - \int_{\gamma_-}\alpha.
\end{equation}
The lower bound on $E_\hor(u)$ is equal to the minimum difference
between two elements in the action spectrum of $\alpha$ both of which
are $\leq K$, which is a positive quantity by the Morse-Bott
condition. (Note that the proof of the corresponding fact \cite[Lemma
10.9]{BEHWZ} in a stable Hamiltonian manifold is more technical.)  The
proof of \eqref{part:quant2} carries over from the corresponding
result \cite[Lemma 5.11]{BEHWZ} in the unfoliated case.
\end{proof}

\begin{remark}\label{rem:qsph}
  {\rm(Energy quantization for spheres)} In a compact (weak or strong)
  symplectic foliated manifold $(W,\F,\om)$ with a tamed almost
  complex structure, there is a constant $\hbar>0$ that is a lower
  bound for the $\om$-area of $J$-holomorphic spheres. The proof of
  the corresponding result \cite[Proposition 4.1.4]{ms:jh} in the
  unfoliated case carries over, since it only relies on a bound on the
  curvature $R_\om$ of the Levi-Civita connection of the metric
  $\om(\cdot,J,\cdot)$ as part of the proof of the mean value
  inequality \cite[Proposition 4.3.1]{ms:jh}. There is a uniform bound
  on $R_\om$ corresponding to the leafwise metric $\om(\cdot, J\cdot)$ on any weak or strong symplectic foliation.
\end{remark}

\subsection{Asymptotic behaviour at punctures}
In this section, we show that holomorphic maps with finite Hofer
energy are asymptotically cylindrical, assuming the contact form on
the ends is Morse-Bott.  To simplify notation, we consider punctured
holomorphic curves in symplectizations.  We denote any map as
$u=(u_\R, u_M) : \oSig \to \R \times M$. Here $(M,\F,\alpha)$ is a
foliated manifold with contact form $\alpha \in \Om^1(M)$.

\begin{theorem}\label{thm:rem-fol}
  {\rm(Asymptotic convergence, foliated case)} Let $(M,\F,\alpha)$ be a
  Morse-Bott contact foliation of codimension one, and let
  \[u: \R_{\geq 0} \times \R/\Z \to \R \times M\]
  be a $J$-holomorphic map with finite Hofer energy, whose image is
  contained in the leaf $\R \times L \subset \R \times M$.  Then
  either $u$ has a removable singularity at $\infty$ or there exist
  constants $c$, $\delta>0$, $a \in \R$, a period $T \in \R$ and a
  periodic Reeb orbit $\gamma : \R/T\Z \to L$ such that
  %{\color{red} It may not be in the same leaf?}
  % sv: It is in the same leaf. The proof shows that.
  %
  \begin{equation}
    \label{eq:asymp-close}
    d_{\cyl,\F}(u(s,t),\ol \gamma(s,t)) \leq c e^{- \delta s},  
  \end{equation}
  where
  \[\ol \gamma : \R \times \R/\Z \to \R \times L, \quad (s,t) \mapsto (a+
    Ts, \gamma(Tt)) \]
  is a Reeb cylinder that lifts $\gamma$, and $d_{\cyl,\F}$ is a product
  metric on $\R \times L$. Furthermore, if the period $T$ is known,
  the constant $\delta$ can be chosen independently of the map $u$. 
\end{theorem}

Theorem \ref{thm:rem-fol} generalizes the following result (Proposition 
\ref{prop:rem-unfol}) in the unfoliated setting. Proposition 
\ref{prop:rem-unfol} is proved in the Morse case in \cite{hwz:nondeg}
and generalized to the Morse-Bott case by Hofer-Wysocki-Zehnder
\cite{hwz:deg} and Bourgeois \cite{bourg:thesis}. There is a simpler
proof in \cite{gv:rem} for the Morse-Bott case, which is easier to
generalize to the foliated setting.

\begin{proposition}\label{prop:rem-unfol}
  {\rm(Asymptotic convergence, unfoliated case)} Suppose
  $(M,\alpha \in \Om^1(M))$ is a compact contact manifold whose Reeb
  orbits form Morse-Bott families, and let $J$ be a cylindrical almost
  complex structure on $\R \times M$. Let
  \[u:\R_{\geq 0} \times \R/\Z \to (\R \times M, J)\]
  be a $J$-holomorphic map with finite Hofer energy.  Then the
  conclusions of Theorem \ref{thm:rem-fol} apply.
\end{proposition}
\begin{proof} We outline the proof following \cite{gv:rem}. We first
  describe a twist of the map $u$.  By \cite[p165]{wendl:sft}, there
  is a sequence $s_k \to \infty$ for which the maps
  $u_M(s_k,\cdot) : \R/\Z \to M$ converge uniformly to a Reeb orbit
  $\gamma_\pre: \R/\Z \to M$ of period $T$.
Define the $(-T)$-twist of $u$ as 
  \begin{equation}
    \label{eq:utw}
    u_\tw: \R_{\geq 0} \times [0,1] \to \R \times M, \quad (s,t) \mapsto
    (u_\R(s,t)-Ts,\psi_{-Tt}(u_M(s,t))).
  \end{equation} 
  The sequence $u_{\tw,M}(s_k,\cdot):[0,1] \to M$ then converges to a constant
  map $m:=\gamma_\pre(0) \in M$.
(We recall that for any $\tau$, $\psi_\tau$ is the time $\tau$ flow of the Reeb vector field on $M$.) 
  For later use, we point out that the discussion in
  \cite[p165]{wendl:sft} implies that there is a sequence of
  translations $\tau_k \in \R$ such that there is a uniform convergence 
  \begin{equation}
    \label{eq:uconv}
    e^{\tau_k}u(s_k, \cdot): \R/\Z \to (\R \times M) \xrightarrow{\text{uniformly converge}}(0,\gamma_\pre) : \R/\Z \to (\R \times M).
  \end{equation}

  Eventually, using a version of the monotonicity lemma, we will show
  that $u_\tw(s,t)$ converges exponentially to a point
  $(a,m) \in (\R \times M)$ for some $a \in \R$, as $s \to \infty$. However, to apply
  the monotonicity lemma we need Lagrangian boundary conditions on the map, which requires us
  to define a ``doubling'' of $u_\tw$.

The {\em double} of $u_\tw$ is defined as 
  \begin{equation}
    \label{eq:vdef}
    v:\R \times [0,\tfrac 1 2] \to (\R \times M)^2 \quad (s,t) \mapsto (u_\tw(s,t),u_\tw(s,1-t)).  
  \end{equation}
  The map $v$ is holomorphic with respect to the $t$-dependent almost
  complex structure
  \[\tilde J := (\tilde J_t)_{t \in [0,\hh]}, \quad \tilde
    J_t:=\psi_t^*J \oplus (-\psi_{1-t}^*J),\]
  and the boundary $\R \times \{0\}$ resp. $\R \times \{\hh\}$ of the
  strip maps to
  \begin{equation}
    \label{eq:DelT}
    \Delta:=\{(x,x): x \in \R \times M\} \quad \text{resp.} \quad \Delta_T:=\{(x,\psi_T(x)): x \in \R \times M\}, 
  \end{equation}
  which is the diagonal resp. the twisted diagonal in
  $(\R \times M)^2$. Both of these are Lagrangian submanifolds with
  respect to the symplectic form $\om \oplus (-\om)$. The Morse-Bott
  condition on $M$ implies that the Lagrangians $\Delta$, $\Delta_T$
  intersect {\em cleanly} in $(\R \times M)^2$, that is,
  $\Delta \cap \Delta_T$ is a submanifold and
  \[T_x(\Delta \cap \Delta_T)=T_x\Delta \cap T_x\Delta_T \quad \forall
    x \in \Delta \cap \Delta_T.\]

  A version of the monotonicity theorem for holomorphic strips
  (Proposition \ref{prop:monot}) allows us to conclude that the image of
  $v$ is contained in a compact set as follows:
  % Since the sequence
  % $e^{\tau_k}u_\tw(s_k,\cdot):[0,1] \to (\R \times M)^2$ converges
  % to a constant,
  The Hofer energy of $u_\tw$ decays to zero, that is,
  $E_\Hof(u_\tw|[s,\infty) \times [0,1]) \to 0$ as $s \to
  \infty$. Indeed, in the expression \eqref{eq:ehof2}, the
  contribution of $\int u_\tw^* d \alpha$, which is equal to
  $\int u^* d\alpha$ goes to zero in the ends since $E_\Hof(u)$ is
  finite; and the second term in \eqref{eq:ehof2} goes to zero because
  $u_\tw(s_k,\cdot) : [0,1] \to (\R \times M)$ converges to a constant
  map (see \cite[Lemma 11]{gv:rem}).  We choose a large $k$ such that
  the image $u_\tw(\{s_k\} \times [0,1])$ lies in a small enough
  neighborhood of $(a,m)$ for some $a \in \R$ (which is possible by \eqref{eq:uconv}), and
  $E_\Hof(u_\tw,\R_{\geq s_k} \times [0,1])$ is small enough. Applying
  the strip monotonicity theorem to $v$ we conclude that there is not
  enough energy for $v|(\R_{\geq s_k} \times [0,\hh])$ to leave a
  neighborhood of $(a,m)$. (See Step 2 in the proof of \cite[Theorem 1]{gv:rem} for details.)

  Having proved that the image of $v$ is bounded, we may apply results
  for the removal of singularity for pseudoholomorphic strips in
  compact symplectic manifolds, which is a straightforward
  generalization of the removal of singularities in a
  pseudoholomorphic map with a puncture in the interior or boundary of
  the domain.  The exponential decay for the map $u$ follows as a
  consequence. See \cite{gv:rem} for details.
\end{proof}

 \begin{proposition}\label{prop:monot}
   {\rm(Strip monotonicity)} Let $L_0, L_1$ be cleanly intersecting
   Lagrangian submanifolds in a symplectic manifold $(M,\om)$ equipped
   with a Riemannian metric $g$, and let $U_\J$ be a
   $C^0$-neighborhood on the space of tamed almost complex structures.
   Let $p \in L_0 \cap L_1$.  There are constants $R>0$, $c>0$ such
   that
   \begin{enumerate}
   \item for any point $x$ and $r>0$ such that
     $B_r(x) \subset B_R(p)$,
   \item a compact surface with corners $C \subset \R \times [0,1]$,
   \item and a map $u: C \to M$ that is holomorphic with respect to a
     domain-dependent almost complex structure $J:C \to U_\J$, and
     which satisfies the Lagrangian boundary conditions
     \[u(C \cap (\{i\} \times \R)) \subset L_i \quad \text{for $i=0,1$
         and} \quad u(\partial C \bs (\{0,1\} \times \R)) \subset
       \partial B_r(x), \]
     and $x \in u(C)$,
   \end{enumerate}
   the symplectic area of $u$ is bounded as
   \[\int_C u^*\om \geq cr^2. \]
 \end{proposition}

 The proof of Proposition \ref{prop:rem-unfol} generalizes to the
 corresponding result in the foliated setting.
 \begin{proof}[Proof of Theorem \ref{thm:rem-fol}]
   As in the unfoliated case, a preliminary limit Reeb orbit
   $\gamma_\pre$ exists. Furthermore, there is a sequence
   $s_k \to \infty$ and translations $\tau_k$ such that the sequence
   $e^{\tau_k}u(s_k,\cdot)$ converges in $C^\infty_\loc$ to
   $(0,\gamma_\pre) : \R/\Z \to (\R \times M)$.  In the foliated case,
   the convergence is in $(\R \times M)$ and not on a leaf of the
   foliation, and we allow the possibility that the limit orbit
   $\gamma_\pre$ is in a different leaf than the one containing the
   map $u$.  The twisted map $u_\tw$ is defined as in \eqref{eq:utw},
   and the sequence $e^{\tau_k}u_{\tw}(s_k,\cdot)$ of loops converges
   to a point $(0,m) \in \R \times M$, where $m:=\gamma_\pre(0)$. 

   Next we define a doubling $v$ of the twisted map $u_\tw$. 
   The doubled map is defined in a similar way as the unfoliated case
   in \eqref{eq:vdef}.  Let $V \subset M$ be a neighbourhood of $m$ 
   equipped with a transverse coordinate $\bt : V \to \R$ of the
   foliation.  The doubled map will be defined on
   \begin{equation}
     \label{eq:dsub}
     C:=\{(s,t) \in \R \times [0,\tfrac 1 2] : u_\tw(s,t), u_\tw(s,1-t)
     \in \R \times V\}.
   \end{equation}
   Note that $C$ may have multiple components, that are mapped by
   $u_\tw$ to different leaves of $\F|(\R \times V)$.  However for
   large $k$, $u_\tw(s_k, \cdot)$ is contained in $\R \times V$.  Fix
   any increasing diffeomorphism $\varphi : \R \to (-\eps,\eps)$.  Let
   \[\tilde V:= \{(x,y) \in (\R \times V) \times (\R \times V) :
     \bt(x)=\bt(y)\}, \quad \tilde \om:=(\om_\varphi \oplus
     -\om_\varphi)|\tilde V.\]
   Then $(\tilde V, \tilde \om)$ is a foliated symplectic manifold
   with a foliation $\tilde \F$ of codimension one.  We define
   $v: C \to \tilde V$ as in \eqref{eq:vdef}.  Furthermore,
   $\Delta$ and $\Delta_T$ (as in \eqref{eq:DelT}) are foliated
   submanifolds of $\tilde V$, that are a pair of cleanly intersecting
   Lagrangians in each leaf of $\tilde \F$. The strip boundaries
   $\R_{\geq 0} \times \{i\}$, $i=0$ and $i=1$ are mapped by $v$ to
   $\Delta_T$ and $\Delta_0$.

   We now finish the proof. 
   Denote $p:=((0,m),(0,m)) \in \tilde V$. 
   Choose foliated Darboux coordinates on a neighborhood
   $\tilde U \subset \tilde V$ of $p$, which is a diffeomorphism
   \[(\tilde U, \Delta, \Delta_T) \xrightarrow{(\pi,\bt)} (U,L_0,L_1)
     \times (0,\eps),\]
   such that $\bt$ is the transverse coordinate to the foliation, $U$
   is an open subset of $(\R^{2n},\om_{std})$, $L_0$, $L_1$ are
   Lagrangians in $(U, \om_{std})$, and $\pi^*\om_{std}$ is equal to
   $\tilde \om$ on $T\tilde \F$.
   As in the unfoliated case, for a large enough $k$,
   $\pi(e^{\tau_k}v(\{s_k\} \times [0,\hh]))$ is in a small enough neighborhood of $\pi(p)$, and 
   \[\tilde \om(v|(\R_{\geq s_k} \times [0,\texthh])) = E_\Hof(u_\tw|(\R_{\geq s_k} \times [0,1]))\]
   is small enough. An application of the strip monotonicity theorem
   shows that the image of $v$ is bounded in a leaf of $\tilde V$. We
   may then apply results for the removal of singularity for strips in
   compact symplectic manifolds, and show that $v(s,\cdot)$ converges
   to a limit at a rate that is exponential in $s$, from where the
   corresponding estimate \eqref{eq:asymp-close} for $u$ follows. We remark that 
   we also obtain the conclusion that the Reeb cylinder $\ol \gamma$ lies in the same leaf as $u$. 

   Finally, the exponential decay constant $\delta$ can be chosen uniformly
   because the Morse-Bott submanifold $N_T$ consisting of $T$-periodic
   orbits is compact in $X$. The details are the same as the
   unfoliated case in \cite{gv:rem}.
 \end{proof}

\subsection{Behaviour at thin cylinders}
We consider a symplectization of a contact foliation with Morse-Bott
orbits.  The main result of the section is that a sequence of
holomorphic annuli tangent to the foliation with small enough area
converge to a pair of semi-infinite cylinders, both whose infinite
ends are asymptotic to the same Reeb orbit with opposite
orientations (as in \eqref{eq:opp-orient}).

We first state and prove the result 
for an unfoliated contact manifold $(M,\alpha)$.

\begin{proposition}\label{prop:bubconn}
  {\rm(Limit of thin cylinders)} Let $(M,\alpha)$ be a compact contact manifold, and let 
  \begin{equation*}
    % \label{eq:ann-seq}
    u_\nu : [-\tfrac{l_\nu} 2, \tfrac {l_\nu} 2] \times \R/\Z \to \R\times M, \quad l_\nu \to \infty
  \end{equation*}
  be a sequence of pseudoholomorphic annuli that has uniformly bounded
  Hofer energy, uniformly bounded derivatives $|du_\nu|$, and is
  bubble-free (as defined following \eqref{eq:Anu}). Suppose that
  $t_\nu^\pm$ is a sequence of translations such that the maps
  $e^{t_\nu^\pm}u_\nu(\cdot \mp \frac {l_\nu} 2)$ converges on compact
  subsets to $u_\pm : \R_\pm \times \R/\Z \to \R \times M$ that is
  asymptotic to the Reeb cylinder
  \[\ol \Ree_\pm : \R \times \R/\Z \to \R \times M, \quad (s,t)
    \mapsto (T_\pm s, \Ree_\pm(T_\pm t)),\]
  where $\Ree_\pm$ is a Reeb orbit of period $T_\pm$. Then, $\Ree_+$
  and $\Ree_-$ are the same Reeb orbit with opposite orientations (as
  in \eqref{eq:opp-orient}).
\end{proposition}

\begin{proof}[Proof of Proposition \ref{prop:bubconn}] 
  The proof is by describing the Gromov limit of the twisted doubles of the
  pseudoholomorphic annuli.

  We describe the first irreducible component $v_-$ of the limit of the twisted doubles, namely the one containing the $(-\frac {l_\nu} 2)$-end.    Assuming that the
  period of $\Ree_-$ is $T_-$, let 
  \[u_{\nu,\tw}: [-\tfrac {l_\nu} 2, \tfrac {l_\nu} 2] \times [0,1]
    \to \R \times M, \quad (s,t) \mapsto (u_{\nu,\R}(s,t) +
    t_\nu^--T_-(s+ \tfrac {l_\nu}
    2),\psi_{-T_-t}(u_{\nu,M}(s,t))).  \]
  be the $(-T_-)$-twist of $e^{t_\nu^-}u_\nu$, and let
  \[v_\nu :  [-\tfrac {l_\nu} 2, \tfrac {l_\nu} 2] \times [0,\texthh]
    \to (\R \times M)^2, \quad (s,t) \mapsto (u_{\nu,\tw}(s,t), u_{\nu,\tw}(s,1-t))\]
  be the double of $u_{\nu,\tw}$. 
  Similarly, let
  \[u_{-,\tw}:\R_{\geq 0} \times [0,1], \quad (s,t) \mapsto (u_{-,\R}(s,t) -
    T_-s,\psi_{-T_-t}(u_{\nu,M}(s,t)))\]
  be the $(-T_-)$-twist
  of $u_-$, and let 
    \[v_- :  \R_{\geq 0} \times [0,\texthh]
    \to (\R \times M)^2, \quad (s,t) \mapsto (u_{-,\tw}(s,t), u_{-,\tw}(s,1-t))\]
  be the double of $u_{-,\tw}$. 
  Thus, the sequence $v_\nu(\cdot - \frac {l_\nu} 2)$
  converges on compact subsets to
  $v_- : \R_{\geq 0} \times [0,\hh] \to (\R \times M)^2$.  Since $u_-$ is
  asymptotic to the Reeb cylinder $\ol \Ree_-$,
  the twisted strip $u_{\tw,-}$ converges to the point
  \begin{equation}
    \label{eq:Rminus0}
    u_{-,\tw}(\infty)=(0,\Ree_-(0)) \in \R \times M, 
  \end{equation}
  at an exponential rate, 
 and the doubled strip $v_-$ converges to the point
  \[p:=v_-(\infty)=((0,\Ree_-(0)), (0,\Ree_-(0))) \in (\R \times M)^2\]
at an exponential rate.

  Next, we describe the component $v$ of the Gromov limit map attached to $v_-$. (Initially we allow for the possibility that the domain of $v$ is an infinite strip $\R \times [0,\hh]$, but later in the proof, this possibility is ruled out using the bubble-free condition, and the domain of $v$ will be shown to be a semi-infinite strip $\R_{\leq 0} \times [0,\hh]$.) 
  Let $\kappa>0$ be such that the image of $v_-$ lies in
  $B_\kappa(p)$, and consequently, for some $s_\nu'$,
  \[v_\nu([-\tfrac {l_\nu} 2, s_\nu') \times [0,\texthh]) \subset B_\kappa(p),\]
  where we assume that $s_\nu' \leq \frac {l_\nu} 2$ is the maximal
  such value.  After truncating the annuli by a constant amount, we
  may assume that $E_-:=\tilde \om(v_-) < \frac {\hbar} 2$, where
  $\hbar$ is from Lemma \ref{lem:stripconv}.  Let
  $s_\nu'' \in [-\frac {l_\nu} 2, \frac {l_\nu} 2]$ be the largest
  value for which
  \[ \int_{[-\tfrac {l_\nu} 2, s_\nu''] \times
      [0,\tfrac 1 2]} v_\nu^* \tilde \om \leq E_- + \tfrac \hbar 2. \]
  Let $s_\nu:=\min\{s_\nu',s_\nu''\}$.  By
  Lemma \ref{lem:stripconv}, 
  after passing to a subsequence, the Gromov limit of the sequence of
  strips $v_\nu|[-\tfrac {l_\nu} 2, s_\nu] \times [0,\tfrac 1 2]$ is a
  pair of strips $(v_-,v)$, where $v_-$ is the same as above, and $v$
  is the limit $C^\infty_\loc$ limit of the maps
  $v_\nu(\cdot + s_\nu)$ with $v(-\infty)=p$. The map $v$ is
  non-constant because either it is not contained in $B_\kappa(p)$ or
  its $\tilde \om$-area is at least $\hbar/2$ and is therefore positive.

  We introduce the corresponding limit component for the sequence of
  annuli.  There is a sequence of translations $t_\nu$ such that
  $e^{t_\nu}u_\nu(\cdot + s_\nu)$ converges in $C^\infty_\loc$ to a
  limit $u$ in $\R \times M$.  Indeed, this is a consequence of the
  convergence of $v_\nu(\cdot + s_\nu)$, $u$ may be obtained from $v$
  by reversing and the doubling, and
  $t_\nu=t_\nu^--T_-(s_\nu+\frac {l_\nu} 2)$.  Since $v(-\infty)=p$,
  we also conclude that $u$ asymptotes to the orbit $\Ree_-$ at
  $-\infty$.

  Next, we show that the limit of the twisted doubles just has two
  components $v_-$ and $v$.  It is enough to show that
  $\frac {l_\nu} 2 - s_\nu$ is bounded. For the sake of contradiction,
  suppose $\frac {l_\nu} 2 - s_\nu \to \infty$. Then the domain of $v$
  is $\R \times [0,\hh]$, and the domain of $u$ is $\R \times \R/\Z$.
  Since $v$ is non-constant, $u$ is not a Reeb cylinder, which
  contradicts the bubble-free condition.

  We finish the proof. Since $\frac {l_\nu} 2 - s_\nu$ is bounded, the
  map $u$ is just a domain reparametrization of $u_+$. Since
  $v_-(\infty)=v(\infty)$, we conclude $\Ree_+=\Ree_-$.
\end{proof}

The following Lemma was used in the proof of Proposition
\ref{prop:bubconn}. The Lemma can be proved in an analogous manner to
the result on the closed case -- the Gromov convergence of a sequence
of annuli to a pair of disks connected an interior nodal point --
proved in \cite[Section 4.7]{ms:jh}

\begin{lemma}\label{lem:stripconv}
  {\rm(Gromov convergence for low energy strips)} Let $(W,\om)$ be a
  compact symplectic manifold, and let $L_0, L_1 \subset W$ be cleanly
  intersecting Lagrangian submanifolds.  Let
  $\ul J:=\{J_t\}_{t \in [0,1]}$ be a family of $\om$-tame almost
  complex structures.  There is a constant $\hbar>0$ such that the
  following holds. Let
  \[u_\nu : S_\nu:= [-\tfrac {l_\nu} 2, \tfrac {l_\nu} 2] \times [0,1]
    \to W, \quad u(\cdot,i) \subset L_i, \enspace i=0,1\]
  be a family of $\ul J$-holomorphic strips with Lagrangian boundary
  conditions with $\int_{S_\nu}u_\nu^*\om < \hbar$. Then, after
  passing to a subsequence, $u_\nu(\cdot \pm \frac {l_\nu} 2)$
  converges uniformly in compact subsets to
  $u_\pm: \R_\pm \times [0,1] \to W$, the limits $u_+(\infty)$,
  $u_-(-\infty)$ exist and are equal.
\end{lemma}

Proposition \ref{prop:bubconn} on the limit of thin cylinders generalizes to the foliated
setting.
\begin{theorem}\label{thm:foliated-thin}
  Proposition \ref{prop:bubconn} holds if $(M,\alpha)$ is replaced by
  a contact foliation $(M,\F,\alpha)$ whose Reeb orbits are
  Morse-Bott, and $(u_\nu)_\nu$ is a sequence of holomorphic maps
  tangent to the foliation $\F$.
\end{theorem}
\begin{remark}
  In the statement of Theorem \ref{thm:foliated-thin}, the maps
  $e^{t_\nu^\pm}u_\nu(\cdot \pm \frac {l_\nu} 2)$ converge to $u_\pm$
  in the total space of the foliated manifold $W$, and not on the leaf
  space.
\end{remark}
\begin{proof}
  [Proof of Theorem \ref{thm:foliated-thin}] The proof of Proposition
  \ref{prop:bubconn} entirely carries over.
  The new feature in the proof in the foliated setting is that, in order to define the doubling, 
  the images of the relevant maps  need to lie in a neighborhood where a transverse coordinate $\bt$ is defined as in \eqref{eq:dsub}.
  This is indeed the case, because the sequence $u_{\nu,\tw}(\cdot + \frac {l_\nu} 2)$ converges to a map $u_{-,\tw}$ that exponentially converges to the point $(0,\Ree_-(0))$ as $s \to \infty$, 
  and 
 for the proof, it is enough to consider subsets of the domain that
 are mapped by $u_{\nu,\tw}$ to a small neighborhood of $(0,\Ree_-(0))$.

  % images lie
  % Indeed, in a foliated manifold, the twisted double is well-defined if the image of the sequence

  % with a minor difference: The twisted
  % doubled holomorphic strips are only defined in a subset of the
  % domain as in \eqref{eq:dsub}. In Step 1 of the proof, we may assume
  % that the double $v_\nu$ is defined on
  % $[-\frac {l_\nu} 2, s_\nu] \times [0,\hh]$.  The subsequent steps of
  % the proof may be replicated, and so we conclude that
  % $|\frac {l_\nu} 2 - s_\nu|$ is uniformly bounded.  If we replace the
  % domain annuli by $[-s_\nu,s_\nu] \times \bS^1$, there is a foliated
  % chart that contains the images of all the maps $u_\nu$ in the
  % sequence, and the proof of Proposition \ref{prop:bubconn} carries
  % over.
\end{proof}
\bibliographystyle{amsplain} \bibliography{fill}
\end{document}